\documentclass[oneside]{amsart}

\usepackage[T1]{fontenc}

\usepackage[latin1]{inputenc}

\usepackage{amsmath,amsthm,amsfonts,amssymb,latexsym, mathrsfs}

\makeatletter

\theoremstyle{plain}
\newtheorem{thm}{Theorem}[section]

\numberwithin{equation}{section} %% Comment out for sequentially-numbered

\numberwithin{figure}{section} %% Comment out for sequentially-numbered

\theoremstyle{plain}
\newtheorem{cor}[thm]{Corollary} %%Delete [thm] to re-start numbering

\theoremstyle{definition}
\newtheorem{defn}[thm]{Definition}

\theoremstyle{plain}
\newtheorem{lem}[thm]{Lemma} %%Delete [thm] to re-start numbering

\theoremstyle{plain}
\newtheorem{prop}[thm]{Proposition} %%Delete [thm] to re-start numbering

\theoremstyle{plain}
\newtheorem{fact}[thm]{Fact}

\theoremstyle{plain}
\newtheorem{notation}[thm]{Notation}

\newtheorem{rem}[thm]{Remark}

 %%Delete [thm] to re-start numbering

\theoremstyle{plain}

\newtheorem{rema}[thm]{Remark}

% example file to change the style of LaTeX

% Tony Roberts May 2004

% first colour for latex or pdflatex

\ifx\pdfoutput\@undefined\usepackage[usenames,dvips]{color}

\else\usepackage[usenames,dvipsnames]{color}

% and fix pdf colour problems

\IfFileExists{pdfcolmk.sty}{\usepackage{pdfcolmk}}{}

\fi

\makeatletter

\usepackage{amssymb}

\usepackage{amsmath}

% blackboard letters for naturals, rationals, reals etc.

\newcommand{\cL}{\mathcal{L}}

\def \<{\langle}
\def \>{\rangle}

\usepackage{amsfonts}

\usepackage{amscd}

\usepackage{latexsym}

\usepackage{epsfig}

\makeatother

\def\Ind#1#2{#1\setbox0=\hbox{$#1x$}\kern\wd0\hbox to 0pt{\hss$#1\mid$\hss}
\lower.9\ht0\hbox to 0pt{\hss$#1\smile$\hss}\kern\wd0}
\def\ind{\mathop{\mathpalette\Ind{}}}
\def\Notind#1#2{#1\setbox0=\hbox{$#1x$}\kern\wd0\hbox to 0pt{\mathchardef
\nn=12854\hss$#1\nn$\kern1.4\wd0\hss}\hbox to
0pt{\hss$#1\mid$\hss}\lower.9\ht0 \hbox to
0pt{\hss$#1\smile$\hss}\kern\wd0}
\def\nind{\mathop{\mathpalette\Notind{}}}

\newcommand{\acl}{\operatorname{acl}}
\newcommand{\acle}{\operatorname{acl^{eq}}}

\newcommand{\tp}{\operatorname{tp}}
\newcommand{\stp}{\operatorname{stp}}
\newcommand{\etp}{\operatorname{etp}}

\newcommand{\dcl}{\operatorname{dcl}}

\newcommand{\cl}{\operatorname{cl}}
\newcommand{\ccl}{\operatorname{cl}}

\newcommand{\scl}{\operatorname{scl}}

\begin{document}

\title{Supersimple structures with a dense independent
subset}

\author{Alexander Berenstein}
\address{Universidad de los Andes,
Cra 1 No 18A-10, Bogot\'{a}, Colombia}
 \urladdr{www.matematicas.uniandes.edu.co/\textasciitilde aberenst}

\author{Juan Felipe Carmona}
\address{Universidad Antonio Nari\~no, Calle 58 A No. 37 - 94, Bogot\'{a}, Colombia}
\email{jfcarmonag@gmail.com}

\author{Evgueni Vassiliev}
\address{Grenfell Campus, Memorial University of Newfoundland, Corner Brook, NL A2H 6P9, Canada}
\email{yvasilyev@grenfell.mun.ca}

\keywords{supersimple theories, pregeometries, unary predicate expansions, one-basedness, ampleness, CM-triviality }
\subjclass[2000]{03C45}
\thanks{The first author was partially supported by the project of Colciencias "M\'etodos de estabilidad en clases no estables". The authors would like to thank Daniel Palac\'in for some helpful coversations and the referee for some valuable suggestions}

\date{}

\begin{abstract}
Based on the work done in \cite{BV-Tind,DMS} in the o-minimal and geometric settings, we study expansions of models of a supersimple theory with a new predicate distiguishing a set of forking-independent elements that is dense inside a partial type $\mathcal{G}(x)$, which we call $H$-structures. We show that any two such expansions have the same theory and that under some technical conditions, the saturated models of this common theory are again $H$-structures. We prove that under these assumptions the expansion is supersimple and characterize forking and canonical bases of types in the expansion. We also analyze the  effect these expansions have on one-basedness and CM-triviality. In the one-based case, 
when $T$ has $SU$-rank $\omega^\alpha$ and the $SU$-rank is continuous, we take $\mathcal{G}(x)$ to be the type of elements of $SU$-rank $\omega^\alpha$ and we describe a natural "geometry of generics modulo $H$" associated with such expansions and show it is modular.
\end{abstract}

\pagestyle{plain}

\maketitle

\section{Introduction}

There are several papers that deal with expansions of simple theories  with a new unary predicate. For example, there is the expansion with a random subset \cite{CP} that gives a case where the new theory is again simple and forking remains the same, in contrast to  the case of lovely pairs \cite{BPV,Va1}, where the pair is usually much richer and  the complexity of forking is related to the geometric properties of the underlying theory \cite{Va1}.

In \cite{BV-Tind} the first and the third authors studied, in the setting of geometric structures, adding a predicate for an algebraically independent set $H$ which is dense and codense in a model $M$ (meaning every non-algebraic formula in a single variable has a realization in $H$ and a realization not algebraic over $H$ and its parameters). Such expansions are called $H$-structures and came as a generalization of ideas developed in the framework of o-minimal theories in \cite{DMS}.  The key tool used in \cite{BV-Tind}  was that the closure operator $\acl$ has the exchange property and thus gives a matroid that interacts well with the definable subsets. A special case under consideration was SU-rank one theories, where forking independence agrees with algebraic independence. In the $SU$-rank one setting the 
authors characterized forking in the expansion and gave a description of canonical bases. As in the lovely pair case, the complexity of forking is related to the underlying geometry of the base theory $T$.

In this paper we bring together ideas from lovely pairs of simple theories and $H$-structures to the setting of supersimple theories, 
we use as a main tool forking independence. We fix a complete supersimple theory and a type $\mathcal{G}(x)$ and consider the expansion where we name as a predicate realizations of  $\mathcal{G}(x)$ that are forking independent and that satisfy a density property with respect to $\mathcal{G}(x)$, namely every formula that belongs to a non-forking extension of $\mathcal{G}(x)$ has a realization in the predicate. We also assume that the structure is rich 
with respect to the predicate: every formula $\varphi(x,\vec a)$ has a realization that is forking independent from the predicate over $\vec a$. 

We call such expansions $H$-structures associated to $\mathcal{G}$. We first prove that all such structures define the same theory, which we call $T^{ind}_\mathcal{G}$. When a $|T|^+$-saturated model of $T^{ind}_\mathcal{G}$ is again an $H$-structure associated to $\mathcal{G}$ we say that the class of $H$-structures associated to $\mathcal{G}$ is first order. We show that the class is first order under a weakening of wnfcp and under some definability condition on non-forking extensions of $\mathcal{G}(x)$. When the class if first order, we prove that the 
extension is supersimple and characterize forking.  In particular,  we get a clear description of canonical bases in the expansion, up to interalgebraicity (see Proposition \ref{cbTind}).

Assume now that $T$ is a  one-based supersimple theory of SU-rank $\omega^\alpha$ and that the $SU$-rank is continuous. 
Take $\mathcal{G}(x)$ to be the (partial) type of rank $\omega^\alpha$, which can be seen as the generic elements in $T$. 
Then there is a natural pregeometry associated to the generics, namely 
$a\in \cl(B)$ if $SU(a/B)<\omega^\alpha$. We then use this expansion to study the underlying geometry of the closure operator localized in $H$.
We show that if $T$ is a  one-based supersimple theory of SU-rank $\omega^\alpha$,
$(N,H)$ a sufficiently (e.g. $|T|^+$-) saturated $H$-structure, then the localized closure operator $\cl(-\cup H)$ is modular and its associated geometry is a disjoint union of projective geometries over division rings and trivial geometries. These closures have been studied also for lovely pair like constructions, see for example the work of Fornasiero on lovely pairs of closure operators \cite{Fo}.

Of special interest is the effect of our expansion on the geometric complexity, namely the ampleness hierarchy. Following the ideas of \cite{Ca}, we show that the expansion preserves CM-triviality. When $T$ is one-based, $\mathcal{G}$ is $x=x$, then $T^{ind}_\mathcal{G}$ is one-based if and only if forking is trivial.
 
When $T$ is one-based of SU-rank $\omega^\alpha$ with continuous rank and we choose $\mathcal{G}$ to be the partial type of SU-rank $\omega^\alpha$, then, as described above, the theory induces a closure operator: $a\in \cl(B)$ if $SU(a/B)<\omega^\alpha$. In this setting, $T^{ind}_\mathcal{G}$ is one-based if and only if the pregeometry associated to $\cl$ is trivial.

Finally we mention another paper related to this expansion. Assume the theory $T$ is superstable and $\mathcal{G}(x)$ is a
stationary type over $\emptyset$, then the interpretation of the predicate is a Morley sequence; this case is related to the 
work done in \cite{BB}.

This paper is organized as follows. In section \ref{sec:H-structures} we start with a complete supersimple theory $T$ and and a type-definable set $\mathcal{G}$ and we define the class of $H$-structures associated to a theory $\mathcal{G}$. We show that two such $H$-structures are elementarily equivalent and call $T^{ind}_\mathcal{G}$ this common theory. Finally we prove that under some technical conditions (elimination of the quantifier $\exists^{large}$ and the
type definability of the predicates $Q_{\varphi,\psi}$) the saturated models of $T^{ind}_\mathcal{G}$ are again $H$-structures.

In section \ref{sec:examples} we study several examples of supersimple theories: $SU$-rank one structures, differentially closed fields,
the free pseudoplane, vector spaces with a generic automorphism, $\omega$-stable theories with definable Morley rank, $H$-pairs and lovely pairs of geometric theories. In most cases we show the corresponding theory of $H$-structures with a reasonable choice for $\mathcal{G}$
is first order. In the cases of ACFA and a vector space with a generic automorphism, when $\mathcal{G}$ is the generic type, being first order is reduced to type definability of 
the predicates $Q_{\varphi,\psi}$.

In section \ref{sec:definablesets} we analyze the definable sets in the expansion, we prove that every definable set is a boolean combination of old formulas bounded by existential quantifiers over the new predicate.  In section \ref{sec:supersimplicity}
we characterize forking in the expansion and characterize canonical bases. In section \ref{1based-section}  we study the question of preservation of one-basedness and then in section \ref{Ampleness-section} the preservation of CM-triviality under our expansion. Finally in section \ref{geometrymodH} we consider the special case where $T$ is a one-based supersimple theory of SU-rank $\omega^\alpha$ such that the $SU$-rank is continuous and take $\mathcal{G}(x)$ to be the (partial) type of rank $\omega^\alpha$ . We study the geometry of $\cl(-\cup H)$, where $\cl$ is the closure associated to the generic elements in the theory.

\section{$H$-structures: definition and first properties}\label{sec:H-structures}

Let $T$ be a supersimple theory in a language $\cL$, let $M\models T$ be a monster model. 
Since our only assumption on $T$ is supersimplicity, if needed, we can assume $T=T^{eq}$.
Fix a partial
1-type $\mathcal{G}(x)$ over $\emptyset$.

For $M\models T$, $A\subset M$ and $b\in M$, we say that that $b$ is \emph{large} over $A$ (or that
$\tp(b/A)$ is large) if $\tp(b/A)$ is a non-forking  extension of $\mathcal{G}$. Otherwise we say
that $b$ is \emph{small} over $A$ (or that $\tp(b/A)$ is small).

We will say a formula $\varphi(x,\vec a)$ is \emph{large} if there is $b\models \varphi(x,\vec a)$
such that $b$ is large over $\vec a$. We will say a formula $\varphi(x,\vec a)$ is \emph{small} if 
it is not large. That is, for all $b\models \varphi(x,\vec a)$, $\tp(b/\vec a)$ either does not 
extend $\mathcal{G}$ or it is a forking extension of $\mathcal{G}$.

The goal of this paper is to merge the ideas on $H$-structures developed in \cite{BeVa} 
with those of lovely pairs of simple theories \cite{BPV} and develop the notion of $H$-structures
relative to the family $\mathcal{G}$.

Following the ideas from \cite{BeVa}, we let $H$ be a new unary predicate that does not appear in $\cL$ and define $\cL_H=\cL\cup \{H\}$. So an $\cL_H$-structure is a pair $(M,H(M))$, where $H(M)\subset M$.

\begin{notation}
Let $(M,H(M))$ be an $\cL_H$-structure and let $A\subset M$. We write $H(A)$ for
$H(M)\cap A$.
\end{notation}

\begin{notation}
Throughout this paper \emph{independence} means forking
independence in the sense of $T$ and we use the familiar symbol $\ind$.
We write $\tp(\vec a)$ for the $\cL$-type
of $a$ and $\dcl$, $\acl$ for the definable closure and the algebraic
closure in the language $\cL$.
Similarly we write $\dcl_{H}, \acl_{H}, \tp_{H}$ for the definable closure,
the algebraic closure and the type in the language $\cL_H$.
\end{notation}

\begin{defn}\label{Hstructures}
We say that $(M,H(M))$ is an $H$-\emph{structure associated to $\mathcal{G}$} if
\begin{enumerate}
\item For every $h\in H(M)$, $h \models \mathcal{G}$.
\item For all $n\geq 1$, if $h_1,\dots,h_n\in H(M)$ are distinct, then they are independent.
\item (Density/coheir property) If $A\subset
M$ is finite and $q\in
S_1(A)$ is the type of a non-forking extension of $\mathcal{G}$, there is  $a\in H(M)$ such that
$a\models q$.
\item (Co-density/extension property)  If $A\subset M$ is
finite and $q\in S_1(A)$, there is  $a\in M$, $a\models q$ and $a\ind_A H(M)$.
\end{enumerate}
\end{defn}

\begin{lem}
The pair $(M,H(M))$ is an $H$-structure associated to $\mathcal{G}$ if and only if:
\begin{enumerate}
\item $H(M) \models \mathcal{G}$.
\item For all $n\geq 1$, if $h_1,\dots,h_n\in H(M)$ are distinct, then they are independent.
\item[(2')] (Generalized density/coheir property) If $A\subset M$ is
finite and $q\in S_n(A)$ is the type of  an independent $n$-tuple of 
non-forking extensions of $\mathcal{G}$, then
there is $\vec a\in H(M)^n$ such that $\vec a\models q$.
\item[(3')] (Generalized co-density/extension property)  If $A\subset M$ is
finite and $q\in S_n(A)$, then there is $\vec a\in M^n$
realizing $q$ such that $\tp(\vec a/A\cup H(M))$ does not fork over $A$.
\end{enumerate}
\end{lem}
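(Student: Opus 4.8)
The plan is to prove the equivalence by showing that the original definition (conditions (1)--(4) of Definition \ref{Hstructures}) implies the single-tuple conditions plus (2'), (3'), and conversely; the real content is upgrading the $1$-type statements (3) and (4) to the $n$-type statements (2') and (3'), since clauses (1) and (2) are literally the same in both formulations (note that (1) in the Lemma, ``$H(M)\models\mathcal G$'', just abbreviates the clause-(1) requirement that every $h\in H(M)$ realizes $\mathcal G$). Both upgrades will be inductions on $n$, and the engine in both cases is the standard transitivity/symmetry calculus of forking independence in a simple theory, together with the fact that a finite independent tuple can be built one coordinate at a time.

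For $(3)\Rightarrow(2')$: given a finite $A$ and $q(x_1,\dots,x_n)\in S_n(A)$ which is the type of an independent $n$-tuple $\vec b=(b_1,\dots,b_n)$ with each $b_i$ large over $A$, I would realize $q$ inside $H(M)$ inductively. Having found $h_1,\dots,h_{k}\in H(M)$ realizing $\tp(b_1,\dots,b_k/A)$, consider $\tp(b_{k+1}/A b_1\cdots b_k)$ and transport it to a type $q_{k+1}\in S_1(A h_1\cdots h_k)$ via the elementary map $A\vec b_{\le k}\mapsto A\vec h_{\le k}$. Since $\vec b$ is an independent tuple with all coordinates large over $A$, $b_{k+1}\ind_A b_1\cdots b_k$ and $b_{k+1}$ is large over $A$, hence (by transitivity of non-forking through $A$) $\tp(b_{k+1}/Ab_1\cdots b_k)$ is a non-forking extension of $\mathcal G$; the same holds for $q_{k+1}$. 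By density (3) applied to the finite set $Ah_1\cdots h_k$ there is $h_{k+1}\in H(M)$ realizing $q_{k+1}$, and then $h_1,\dots,h_{k+1}$ realizes $\tp(b_1,\dots,b_{k+1}/A)$; at the final stage $\vec h=(h_1,\dots,h_n)\in H(M)^n$ realizes $q$. (That the $h_i$ are automatically distinct follows from distinctness of the $b_i$, which is forced by independence when the $b_i$ are non-algebraic.)

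For $(4)\Rightarrow(3')$: again induct on $n$. Given $q\in S_n(A)$ realized by $\vec b=(b_1,\dots,b_n)$, first apply (4) to $\tp(b_1/A)$ to get $a_1\models\tp(b_1/A)$ with $a_1\ind_A H(M)$. Now apply the inductive hypothesis over the base $Aa_1$ to the type $\tp(b_2,\dots,b_n/Ab_1)$ transported along $b_1\mapsto a_1$, obtaining $(a_2,\dots,a_n)$ realizing it with $(a_2,\dots,a_n)\ind_{Aa_1}H(M)$. Combining $a_1\ind_A H(M)$ with $(a_2,\dots,a_n)\ind_{Aa_1}H(M)$ via transitivity yields $\vec a=(a_1,\dots,a_n)\ind_A H(M)$, i.e.\ $\tp(\vec a/A\cup H(M))$ does not fork over $A$, and $\vec a\models q$ by construction. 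The converse directions are immediate since (2') and (3') contain (3) and (4) as the $n=1$ cases, and (1), (2) are identical.

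The only subtlety — and the step I'd flag as the ``main obstacle'', though it is mild — is making sure the inductive transports are legitimate: one needs that density (3) and extension (4) are stated over arbitrary finite parameter sets (they are), so that at each inductive step the enlarged base $Ah_1\cdots h_k$ (resp.\ $Aa_1$) is again a legal finite parameter set; and one needs the elementary-map transport to preserve ``non-forking extension of $\mathcal G$'', which it does because $\mathcal G$ is a type over $\emptyset$ and non-forking is automorphism-invariant. With these book-keeping points in place the two inductions go through routinely.
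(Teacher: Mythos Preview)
Your proposal is correct and follows essentially the same approach as the paper: both arguments prove $(2')$ by building the tuple in $H(M)$ one coordinate at a time, transporting the type of the next coordinate along the partial elementary map and invoking the $1$-type density property over the enlarged finite base. The paper in fact only writes out the argument for $(2')$ and leaves $(3')$ to the reader; your inductive proof of $(3')$ via transitivity of non-forking is exactly the expected argument.
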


\begin{proof}
We prove (2') and leave (3') to the reader. Let $\vec b\models q$,
we may write $\vec b=(b_1,\dots,b_n)$.
 Since $(M,H(M))$ is an $H$-structure, applying the density property we can
 find $a_1 \in H(M)$ such that $\tp(a_1/A)=\tp(b_1/A)$. Let
 $q(x,b_1,A)=tp(b_2,b_1,A)$ and let $A_1=A\cup\{a_1\}$. Now consider the type $q(x,a_1,A)$
 over $A_1$, which is some non-forking extension of $\mathcal{G}$. Applying the density property we can
 find $a_2 \in H(M)$ such that $\tp(a_2,a_1/A)=\tp(b_2,b_1/A)$. We continue inductively
 to find the desired tuple $(a_1,a_2,\dots,a_n)$.
\end{proof}

Note that if $(M,H(M))$ is an $H$-structure, the extension property
implies that $M$ is $\aleph_0$-saturated. Also note that Definition \ref{Hstructures} can be generalized to the setting of simple theories following the ideas of \cite[Definition 3.1]{BPV}. Finally note that if the codensity property holds for types in the real sort, then it also holds for types in imaginary sorts.

\begin{defn}
Let $A$ be a subset of an $H$-structure $(M,H(M))$.
We say that $A$ is $H$-\emph{independent} if $A$ is independent
from $H(M)$ over $H(A)$.
\end{defn}

\begin{lem}\label{existence}
Any model $M$ of $T$ with a distinguished independent subset
$H(M)$ can be embedded in an $H$-structure in an $H$-independent
way.
\end{lem}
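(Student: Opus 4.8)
The plan is to build the desired $H$-structure by a transfinite chain construction, adding realizations of types at each stage so that the density and codensity properties hold in the limit, while preserving independence of $H$.

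First I would set up the induction. Start with $M_0=M$ with its distinguished independent set $H_0=H(M)$; we are given that $M$ is $H$-independent, i.e. $M\ind_{H(M)}H(M)$, which is trivial, so the real content is that $H(M)$ itself is independent. Fix a sufficiently large cardinal $\kappa$ and build an increasing continuous chain of models $(M_\alpha,H_\alpha)_{\alpha<\kappa}$ of $T$ together with a fixed enumeration (with cofinally many repetitions) of all tasks of two kinds: (a) \emph{density tasks}: a finite $A\subseteq M_\alpha$ and a type $q\in S_1(A)$ that is a non-forking extension of $\mathcal{G}$; (b) \emph{codensity tasks}: a finite $A\subseteq M_\alpha$ and a type $q\in S_1(A)$. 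At a successor step handling a density task $(A,q)$, I realize $q$ by some new element $h$ chosen so that $h\ind_A H_\alpha$ (possible since $q$ does not fork over $\emptyset$, so it has an extension to $A\cup H_\alpha$ that does not fork over $A$, hence by transitivity $h\ind H_\alpha$; more carefully, I take $h\models q$ with $h\ind_A H_\alpha$, which by the independence theorem/just extension of non-forking is available), add $h$ to $H_{\alpha+1}$, and let $M_{\alpha+1}$ be a model containing $M_\alpha\cup\{h\}$ chosen with $M_{\alpha+1}\ind_{M_\alpha\cup\{h\}}H_\alpha$. For a codensity task $(A,q)$ I realize $q$ by some $a$ with $a\ind_A H_\alpha$ and put $a$ into $M_{\alpha+1}$ but \emph{not} into $H_{\alpha+1}$; again I take the ambient model $M_{\alpha+1}$ independent from $H_\alpha$ over the new data. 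Take unions at limits and let $(N,H(N))=(M_\kappa,H_\kappa)$.

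The two things to check are that $H(N)$ stays independent throughout, and that all four clauses of Definition \ref{Hstructures} hold in the limit. For independence of $H(N)$: by finite character it suffices to see that any finite $h_1,\dots,h_n\in H(N)$ are independent; these appear at some stage $\alpha$, and I argue by induction on the construction that $H_\beta$ is independent for all $\beta$ — the only nontrivial step is the successor step adding an element $h$ with $h\ind_{A}H_\alpha$, which by monotonicity gives $h\ind H_\alpha$, hence $H_\alpha\cup\{h\}$ is independent; the codensity steps and the passage to larger ambient models do not add elements to $H$, and the stipulated independence $M_{\alpha+1}\ind_{M_\alpha\cup(\text{new elt})}H_\alpha$ guarantees we do not accidentally create forking among the old elements of $H$ over the new parameters. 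Clause (1) is immediate since every element we put in $H$ realizes $\mathcal{G}$. Clause (3) (density) holds because every density task over every finite subset of $N$ eventually gets listed and handled — here I use that a finite subset of $N$ already lies in some $M_\alpha$ and a non-forking extension of $\mathcal{G}$ over it is a task that recurs cofinally. Clause (4) (codensity) is handled symmetrically by the codensity tasks; note $a\ind_A H_\alpha$ together with the fact that no element of $H(N)\setminus H_\alpha$ depends on $a$ (by the independence built into later ambient extensions) upgrades to $a\ind_A H(N)$.

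The main obstacle I expect is bookkeeping the independence condition across the passage to larger ambient models: when at stage $\alpha+1$ I enlarge $M_\alpha$ to a model $M_{\alpha+1}$, I must ensure this does not destroy the relation ``$H$ is independent'' nor the relations ``$a\ind_A H$'' established earlier, and that later realizations of $\mathcal{G}$ can still be chosen inside $H_{\alpha+1}$-independently — this is exactly why each ambient enlargement must be taken non-forking over (old model $\cup$ the single new element) relative to $H_\alpha$, and one must verify by a transitivity/monotonicity argument that these local independences amalgamate into the global statement $N\ind_{H(N)}H(N)$ witnessing $H$-independence of $N$ and, simultaneously, independence of the set $H(N)$. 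Everything else is a routine verification using only supersimplicity (existence and transitivity of non-forking, finite character), with no need of the independence theorem beyond the standard extension lemma.
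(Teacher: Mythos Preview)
Your chain-construction approach is exactly the paper's, and the bookkeeping for independence of $H(N)$ and for the density/codensity clauses is correct in outline. However, there is a genuine gap concerning the phrase ``in an $H$-independent way.'' By Definition~2.5, a subset $A$ of an $H$-structure $(N,H(N))$ is $H$-independent when $A\ind_{H(A)}H(N)$. So the conclusion to be secured is $M\ind_{H(M)}H(N)$ in the final structure, not the trivial relation $N\ind_{H(N)}H(N)$ you write in the last paragraph; your opening remark that ``the real content is that $H(M)$ itself is independent'' shows the same misreading.

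This matters at the density step. Your condition $h\ind_A H_\alpha$ is precisely what keeps $H_{\alpha+1}=H_\alpha\cup\{h\}$ independent, but it does nothing to ensure $M\ind_{H(M)}h$: the new $h$ may well fork with $M_\alpha$ (and hence with $M$) over $H(M)$ while remaining free from $H_\alpha$ over the finite set $A$. The paper's one-step construction instead adds each new $d\in H(N)$ with $d\ind_{H(M)}\vec m$, and implicitly the totality of new $d$'s is chosen independent from $M$ over $H(M)$; this is what makes the embedding $H$-independent. The easy fix in your framework is to realise $q$ by an $h$ with $h\ind_A M_\alpha H_\alpha$ (just take a non-forking extension of $q$ to $M_\alpha H_\alpha$, possible since $q$ does not fork over $\emptyset$). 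Then $h\ind H_\alpha$ gives independence of $H_{\alpha+1}$, while $h\ind_{H_\alpha}M_\alpha$ combined with the inductive hypothesis $M_\alpha\ind_{H_\alpha}H_\alpha$ (maintained along the chain) yields $M_\alpha\ind_{H_\alpha}H_{\alpha+1}$ by transitivity, and hence $M\ind_{H(M)}H(N)$ in the limit.
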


\begin{proof}
Given any model $M$ with a distinguished independent subset $H(M)$ of elements realizing 
$\mathcal{G}$, we can always
find an elementary extension $N$ of $M$ and a set $H(N)$ of independent realizations of $\mathcal{G}$ extending $H(M)$ such
that for every generic $1$-type $p(x,\acl(\vec m))$ (i.e. $p(x,\acl(\vec m))$ is a non-forking extension of $\mathcal{G}$), where $\vec m \in M$, there is $d\in H(N)$ such that $d\models p(x,\acl(\vec m))$ and $d\ind_{H(M)} \vec m$. Add a similar
statement for the extension property. Now apply a chain argument.
\end{proof}

In particular,  $H$-structures exist. The following is the main result from this section:

\begin{prop}\label{H-indtypes}
Let $(M,H)$ and $(N,H)$ be sufficiently saturated $H$-structures,
$\vec a\in M$ and $\vec a'\in N$ $H$-independent tuples such that
$\tp(\vec a, H(\vec a))=\tp(\vec a', H(\vec a'))$.  Then
$tp_H(\vec a)=\tp_H(\vec a')$.
 \end{prop}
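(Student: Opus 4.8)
The plan is to prove this by a back-and-forth argument showing that the map sending $\vec a \mapsto \vec a'$ extends to a partial $\cL_H$-isomorphism respecting $H$. Since the structures are sufficiently saturated, it suffices to show: given $H$-independent tuples $\vec a \in M$, $\vec a' \in N$ with $\tp(\vec a, H(\vec a)) = \tp(\vec a', H(\vec a'))$, and given any $b \in M$, there is $b' \in N$ such that $(\vec a, b)$ and $(\vec a', b')$ are again $H$-independent and $\tp(\vec a, b, H(\vec a, b)) = \tp(\vec a', b', H(\vec a', b'))$. By symmetry this gives the full back-and-forth, and since $\vec a, \vec a'$ were arbitrary $H$-independent tuples with matching $\cL$-type over their $H$-parts, we conclude $\tp_H(\vec a) = \tp_H(\vec a')$.

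The first step is a reduction. Enlarging $\vec a$ to $\vec a \, H(\vec a)$ (and correspondingly on the other side), I may assume that $H(\vec a) \subseteq \dcl(\vec a)$, i.e. the relevant elements of $H$ are already listed among the coordinates; this is harmless because the $H$-independence of the enlarged tuple follows from that of $\vec a$. Now, given $b$, split into two cases according to whether $b$ satisfies $\mathcal{G}$ and is large over $\vec a$ with $b \notin \acl(\vec a)$, forcing a genuine choice, versus the "generic" case. More precisely: let $c$ enumerate $H(\vec a, b) \setminus H(\vec a)$, the new points of $H$ appearing in the algebraic closure (over the base) of $\vec a b$; each such point is large over $\vec a$ and, by property (2) of Definition \ref{Hstructures}, the tuple $c$ is independent over... actually independent as a set and independent from $\vec a$ in a controlled way. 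First use the generalized density property (2') of the Lemma to find $c' \in H(N)$ realizing $\tp(c / \vec a') $ — here I transport the type via the given isomorphism $\vec a \mapsto \vec a'$ — so that $\tp(\vec a, c) = \tp(\vec a', c')$; some care is needed to ensure $c'$ consists of points not already in $H(\vec a')$, using that the members of $c$ are independent realizations of $\mathcal G$ over $\vec a$ together with $H(\vec a) \subseteq \dcl(\vec a)$. Having matched up the $H$-parts, I now need to move $b$ itself: $b \ind_{\vec a c} H(M)$ should hold (this is where $H$-independence of $(\vec a, b)$ is used, together with $H(\vec a, b) = H(\vec a) \, c \subseteq \dcl(\vec a, c)$), so by the generalized extension property (3') applied in $N$ over $\vec a' c'$, I can find $b'$ realizing the transported type $\tp(b / \vec a c)$ with $b' \ind_{\vec a' c'} H(N)$. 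One then checks $\tp(\vec a, b, H(\vec a, b)) = \tp(\vec a', b', H(\vec a', b'))$ and that $(\vec a', b')$ is $H$-independent.

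The main obstacle I anticipate is the bookkeeping around $H(\vec a, b)$: showing that the $H$-independence hypothesis genuinely gives $b \ind_{\vec a\, c} H(M)$ with $c$ being exactly the new $H$-points in $\acl(\vec a b)$, and dually that the $b'$ produced does not accidentally pull in extra elements of $H(N)$ into $\acl(\vec a' b')$ — i.e. that $H(\vec a', b') = H(\vec a')\, c'$. This requires an argument that if $b' \ind_{\vec a' c'} H(N)$ and $h \in H(N) \cap \acl(\vec a', b')$, then $h \in \acl(\vec a', c') \cap H(N)$, which by the independence of $H(N)$ forces $h$ to be among $c'$ or in $H(\vec a')$. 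Monotonicity and transitivity of forking independence, plus the fact that a member of $H$ algebraic over an $H$-independent set lies in the algebraic closure of the $H$-part of that set, should close this gap. The $\cL$-type equality of the extended tuples-with-$H$-parts then follows because both the $\cL$-type of $(\vec a, b, c)$ and the identification of which coordinates lie in $H$ have been preserved by construction.
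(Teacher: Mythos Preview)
Your overall strategy---a back-and-forth between $H$-independent tuples with matching $\cL$-types over their $H$-parts---is exactly the right one, and it is what the paper does. However, there is a genuine gap in your forth step.

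You state the invariant as: for any $b\in M$ there is $b'\in N$ with $(\vec a,b)$ and $(\vec a',b')$ \emph{again} $H$-independent and $\tp(\vec a,b,H(\vec a,b))=\tp(\vec a',b',H(\vec a',b'))$. But for an arbitrary $b$ the tuple $\vec a b$ is simply not $H$-independent in general, and no choice of $b'$ can repair that on the $M$ side. Concretely, take $T$ the theory of an infinite-dimensional $\mathbb{Q}$-vector space, $\mathcal G$ the nonzero type, $\vec a=\emptyset$, and $b=h_1+h_2$ with $h_1,h_2\in H(M)$ distinct. Then $H(\vec a,b)=\emptyset$ (in the paper's notation $H(A)=H(M)\cap A$), and also $H(M)\cap\acl(b)=\emptyset$ since $h_1,h_2\notin\operatorname{span}(b)$; so your $c$ is empty under either reading. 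Yet $b\nind H(M)$. Your claim ``$b\ind_{\vec a c}H(M)$ should hold (this is where $H$-independence of $(\vec a,b)$ is used)'' invokes a hypothesis you do not have: only $\vec a$, not $\vec a b$, was assumed $H$-independent.

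The paper's fix is precisely to enlarge on both sides before placing $b$. When $b\notin H(M)$, one uses supersimplicity to choose a finite $\vec h_1\subset H(M)$ with $b\ind_{\vec a\vec h_1}H(M)$; since $\vec a$ is $H$-independent, each coordinate of $\vec h_1$ (outside $\vec h$) is large over $\vec a$, so by the density property one finds $\vec h_1'\in H(N)$ with $\tp(\vec a\vec h_1)=\tp(\vec a'\vec h_1')$, and only then applies the extension property to get $b'$ with $b'\ind_{\vec a'\vec h_1'}H(N)$. The enlarged tuples $\vec a\vec h_1 b$ and $\vec a'\vec h_1' b'$ are then $H$-independent, and the back-and-forth continues from these. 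Your $c$ should be this $\vec h_1$, produced by local character of forking, not by intersecting $H$ with $\acl(\vec a b)$. Once you make that change (and separately handle the easy cases $b\in H(M)$ small over $\vec a$, where $b\in\vec h$, and $b\in H(M)$ large over $\vec a$, where density gives $b'$ directly), your argument becomes the paper's.
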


 \begin{proof} Write $\vec a=\vec a_0 \vec a_1 \vec h$, where $\vec a_0$ is
independent over $H(M)$, $\vec h=H(\vec a)\in H(M)$ and $\vec a_1$ is small over $\vec
a_0\vec h$. Similarly write $\vec a'=\vec a_0' \vec a_1'\vec h'$.

It suffices to show that for any $b\in M$ there are $\vec h_1\in
H(M)$, $\vec h_1'\in H(N)$ and $b'\in N$ such that $\vec a\vec h_1
b$ and $\vec a'\vec h_1' b'$ are each $H$-independent, $\tp(\vec
a_0 \vec a_1 \vec h \vec h_1b)=\tp(\vec a_0' \vec a_1' \vec h'
\vec h_1'b')$, and $b\in H(M)$ iff $b'\in H(N)$.

\noindent

Case 1: $b\in H(M)$, $b$ is small over $\vec a$. Then $\tp(b/\vec a)$ forks over $\emptyset$. Since $\vec a$ is $H$-independent, we must have $b\ind_{\vec h} \vec h \vec a$ and by transitivity  $b\nind \vec h$. Since $H$ forms an independent set we must have $b\in \vec h$. Let $b'\in \vec h'$
be such that  $\tp(b'\vec a')=\tp(b\vec a)$ and the result follows.
Here we can take $\vec h_1$ and $\vec h_1'$ to be empty.

\noindent

Case 2: $b\in H(M)$ and is large over $\vec a$. Then
$tp(b/\vec a)$ is generic, that is, it is a non-forking extension of $\mathcal{G}$. 
Let $f:M\to M'$ be a partial elementary map sending $\vec a$ to $\vec a'$.
Then $p(x,\vec a')=f(tp(b/\vec a))$ is a non-forking extension of $\mathcal{G}$. 
By the density property, we can find $b'\in H(N)$ such that $b'\models p(x,\vec a')$. That is, $\tp(b'\vec
a')=\tp(b\vec a)$. Note that both tuples $b\vec a$, $b'\vec a'$ are $H$-independent. Here again we can take $\vec h_1$ and $\vec
h_1'$ to be empty.

\noindent

Case 3: $b\not \in H(M)$. By supersimplicity, there is a finite tuple $\vec h_1\in H(M)$ such that $b \ind_{\vec a \vec h_1} H(M)$. We may assume that $\vec h_1$ is disjoint from $\vec h$
and thus independent from $\vec h$. Now observe that since $\vec a$ is $H$-independent, so 
is $\vec a \vec h_1$. By Case 2, there is $\vec h_1'\in H(N)$ such that $\tp(\vec a \vec h_1)=\tp(\vec a' \vec h_1')$. Let $p(x,\vec a \vec h_1)=\tp(b/\vec a \vec h_1)$. 
Now use the extension property to find $b'\in N$ such that $b'\models p(x,\vec a',\vec h_1')$, $b'\ind_{\vec a' \vec h_1'} H(N)$. Then by transitivity $\vec a' \vec h_1'b'$ is $H$-independent.

\end{proof}

The previous result has the following consequence:

\begin{cor}\label{completetheory}
All $H$-structures associated to $\mathcal{G}(x)$ are elementarily equivalent.
\end{cor}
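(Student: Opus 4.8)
The plan is to derive Corollary~\ref{completetheory} as an essentially immediate consequence of Proposition~\ref{H-indtypes} together with Lemma~\ref{existence}. First I would take two $H$-structures associated to $\mathcal{G}$; by passing to sufficiently saturated elementary extensions (which are still $H$-structures, since saturation only strengthens the density and extension properties and does not affect conditions (1)--(2) of Definition~\ref{Hstructures}) I may assume both $(M,H)$ and $(N,H)$ are sufficiently saturated. It then suffices to show $\tp_H(\vec a) = \tp_H(\vec a')$ whenever $\vec a \in M$ and $\vec a' \in N$ realize corresponding pieces of data; in particular, to conclude elementary equivalence it is enough to handle the empty tuple, or more robustly to exhibit a back-and-forth system of partial $\cL_H$-elementary maps.

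The key step is to reduce an arbitrary finite tuple to an $H$-independent one. Given $\vec a \in M$, by supersimplicity there is a finite $\vec h \in H(M)$ with $\vec a \ind_{\vec h} H(M)$, and we may arrange $\vec h = H(\acl_H(\vec a \vec h))$ up to replacing $\vec h$ by a slightly larger finite tuple; thus $\vec a \vec h$ is $H$-independent. The $\cL$-type $\tp(\vec a \vec h, H(\vec a \vec h)) = \tp(\vec a \vec h, \vec h)$ is realized in $N$ by some $\vec a' \vec h'$ with $\vec h' \in H(N)$ and $\vec a' \vec h'$ $H$-independent: one first uses the generalized density property to realize the (independent, large) tuple $\vec h$ inside $H(N)$, then realizes the remaining small part appropriately and corrects it via the extension property to restore $H$-independence, exactly as in Case~3 of the proof of Proposition~\ref{H-indtypes}. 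Now Proposition~\ref{H-indtypes} applies to $\vec a \vec h$ and $\vec a' \vec h'$ and gives $\tp_H(\vec a \vec h) = \tp_H(\vec a' \vec h')$, hence $\tp_H(\vec a) = \tp_H(\vec a')$. Applying this symmetrically yields the desired back-and-forth, and in particular $(M,H) \equiv (N,H)$.

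I do not expect any serious obstacle here: the corollary is genuinely a corollary, and all the real content lives in Proposition~\ref{H-indtypes} and in Lemma~\ref{existence}. The only point requiring a word of care is the claim that a sufficiently saturated elementary extension of an $H$-structure is again an $H$-structure associated to $\mathcal{G}$ --- this is clear because properties (1) and (2) are preserved under elementary extension and under enlarging $H$ along the extension (as produced by the chain argument of Lemma~\ref{existence}), while properties (3) and (4) are exactly the kind of conditions that pass to saturated extensions --- and the observation that $\tp(\vec a, H(\vec a))$ depends only on the $\cL_H$-isomorphism type of $\vec a$, so the hypothesis of Proposition~\ref{H-indtypes} can always be met after a suitable choice of $\vec a'$ in $N$. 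With these remarks the proof is a two-line invocation of the proposition.
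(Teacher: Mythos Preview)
Your reduction to Proposition~\ref{H-indtypes} is the right idea, but the route you take contains a genuine gap. You claim that a sufficiently saturated $\cL_H$-elementary extension of an $H$-structure is again an $H$-structure, saying that conditions (1)--(2) are unaffected and (3)--(4) ``pass to saturated extensions''. None of this is available at this point in the paper. That saturated models of $T^{ind}_{\mathcal{G}}$ are again $H$-structures is precisely the content of the later Definition of ``$T^{ind}_{\mathcal{G}}$ is first order'' and of Proposition~\ref{axioms}, and it requires the extra hypotheses (elimination of $\exists^{large}$ and type-definability of the $Q_{\varphi,\psi}$). In particular, independence of the elements of $H$ (condition~(2)) is not an elementary property of the tuple in general, and the density and extension properties are not obviously preserved under $\cL_H$-elementary extension either. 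Your appeal to Lemma~\ref{existence} does not help: that lemma produces an $H$-structure containing $(M,H(M))$ in an $H$-independent way, but not an $\cL_H$-elementary extension, so you cannot transfer $\cL_H$-sentences back.

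The intended argument is much shorter and avoids this entirely. The proof of Proposition~\ref{H-indtypes} is a back-and-forth: it shows that for any $b\in M$ one can extend the correspondence $\vec a\mapsto\vec a'$ to $b$ (and symmetrically). This step uses only the density and extension properties of $H$-structures and supersimplicity of $T$; the ``sufficiently saturated'' hypothesis is never invoked. A back-and-forth system gives equality of first-order types without any saturation. Applying this with $\vec a=\vec a'=\emptyset$ (which is trivially $H$-independent, with $H(\emptyset)=\emptyset$) yields $\tp_H(\emptyset)$ in $(M,H)$ equals $\tp_H(\emptyset)$ in $(N,H)$, i.e.\ $(M,H)\equiv(N,H)$. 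So the corollary really is a one-line consequence; there is no need to manufacture saturated $H$-structures, and doing so correctly would require machinery not yet developed.
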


We write $T^{ind}_{\mathcal{G}}$ for the common complete theory of all $H$-structures
of models of $T$.

\begin{defn}
We say that $T^{ind}_{\mathcal{G}}$ is \emph{first order} if the $|T|^+$-saturated models of $T^{ind}_{\mathcal{G}}$
are again $H$-structures associated to $\mathcal{G}$.
\end{defn}

To axiomatize $T^{ind}_{\mathcal{G}}$ and to show that $T^{ind}_{\mathcal{G}}$ is first order, we follow the ideas of \cite[Prop 2.15]{Va1}, \cite{BeVa}
and \cite{BPV}. We will need several technical assumptions for this result

\begin{defn} We say that being large is \emph{definable} if
for every formula $\varphi(x,\vec y)$ there is a formula $\psi(\vec y)$ such that for any $\vec a\in M$, $\varphi(x,\vec a)$ is large if and only if $\psi(\vec a)$. When this is the case, we write $\exists^{large}x \varphi(x,\vec y)$ if $\psi(\vec y)$ holds.
\end{defn}

We also need the following definition from \cite[Definition 2.4]{BPV}:

\begin{defn} Let $\psi(\vec y, \vec z)$ and $\varphi(\vec x, \vec y)$ be $\cL$-formulas. $Q_{\varphi,\psi}$
is the predicate which is defined to hold of a tuple $\vec c$ (in $M$) if for all $\vec b$ satisfying
$\psi(\vec y, \vec c)$, the formula $\varphi(\vec x, \vec b)$ does not divide over $\vec c$.
\end{defn}

The following result follows word by word from the proof of \cite[Proposition 4.5]{BPV}, changing the elementary
substructure for the predicate $H$:

\begin{prop}\label{axiomextension}
The following are equivalent:
\begin{enumerate}
\item $Q_{\varphi,\psi}$ is type-definable (in M ) for all $\cL$-formulas $\varphi(\vec x,\vec y)$,$\psi(\vec y,\vec z)$.
%\item The extension property is first order.
\item Any $|T|^+$-saturated model of $T^{ind}_{\mathcal{G}}$ satisfies the extension property.
\end{enumerate}
\end{prop}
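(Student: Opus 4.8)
The plan is to transcribe the proof of \cite[Proposition 4.5]{BPV}, replacing the elementary substructure there by the predicate $H$; what follows indicates the shape of that argument and isolates where type-definability of the $Q_{\varphi,\psi}$ is actually used. Since $T=T^{eq}$, I would first reduce the extension property to a statement about dividing: for finite $A\subset M$ (which we may enlarge to $\acl^{eq}(A)$, a set of size $\le|T|$) and $a\in M$, $\tp(a/A\cup H(M))$ does not fork over $A$ if and only if for every $\cL$-formula $\varphi(x,\vec y)$ and every finite tuple $\vec h$ from $H(M)$ with $\models\varphi(a,\vec h)$ the formula $\varphi(x,\vec h)$ does not divide over $A$ --- using that in a simple theory forking and dividing coincide and that a complete type forks over $A$ iff it contains a formula forking over $A$. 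Hence the extension property for $q\in S_1(A)$ is equivalent to the consistency over $A\cup H(M)$ of the partial type
\[
\Gamma_q(x)\ =\ q(x)\ \cup\ \{\,\neg\varphi(x,\vec h)\ :\ \varphi(x,\vec y)\text{ an }\cL\text{-formula},\ \vec h\in H(M),\ \varphi(x,\vec h)\text{ divides over }A\,\},
\]
and the content of the proposition is to decide when $\Gamma_q$ remains consistent in a $|T|^+$-saturated model of $T^{ind}_{\mathcal{G}}$ that is not yet known to be an $H$-structure.

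For $(1)\Rightarrow(2)$ I would fix such a model $(N,H)$, a finite $A$ and $q\in S_1(A)$, and show every finite subtype of $\Gamma_q$ is realized in $N$. One groups the constraints of $\Gamma_q$ by the $\cL$-formula $\varphi$ and by the $\cL$-type over $A$ of the tuples $\vec h$ involved; because $H$ is an independent set of realizations of $\mathcal{G}$, the $\cL$-types over $A$ occurring among tuples of distinct elements of $H$ are among the non-forking extensions of finite powers of $\mathcal{G}$, so a finite piece of $\Gamma_q$ is governed by finitely many such $\cL$-types, which one isolates by auxiliary $\cL$-formulas $\psi(\vec y,\vec a)$. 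Type-definability of $Q_{\varphi,\psi}$ says exactly that the assertion ``for every $\vec h$ with $\models\psi(\vec y,\vec a)$, $\varphi(x,\vec h)$ does not divide over $A$'' is an $\cL$-type in the enumeration $\vec a$ of $A$; consequently the consistency of the chosen finite piece of $\Gamma_q$ depends only on $\tp_H(\vec a)$. Since by Corollary \ref{completetheory} and Proposition \ref{H-indtypes} that same $\tp_H(\vec a)$ is realized inside a genuine $H$-structure, where $\Gamma_q$ is consistent by the very definition of $H$-structure, the finite piece is consistent, and saturation of $(N,H)$ finishes the argument.

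For $(2)\Rightarrow(1)$ I would argue contrapositively. If $Q_{\varphi,\psi}$ is not type-definable, then, being $\mathrm{Aut}(M)$-invariant, it fails to be an intersection of definable sets: there is a tuple $\vec c$ such that every $\cL$-formula true of $\vec c$ holds of some tuple in $Q_{\varphi,\psi}$, yet $\vec c\notin Q_{\varphi,\psi}$, i.e.\ there is $\vec b_0\models\psi(\vec y,\vec c)$ with $\varphi(x,\vec b_0)$ dividing over $\vec c$. Using the existence of $H$-structures (Lemma \ref{existence}) and a chain argument that successively realizes the $\cL$-type of $\vec c$ and throws realizations of $\psi(\vec y,\vec c)$ into $H$, I would build a $|T|^+$-saturated $(N,H)\models T^{ind}_{\mathcal{G}}$ containing such a $\vec c$ together with ``enough'' realizations of $\psi(\vec y,\vec c)$ inside $H(N)$; the non-closedness of $Q_{\varphi,\psi}$ is precisely what permits this while keeping $(N,H)$ elementarily equivalent to an $H$-structure. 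In $(N,H)$ the $\cL$-type over $\vec c$ extending $\exists\vec y\,(\psi(\vec y,\vec c)\wedge\varphi(x,\vec y))$ then cannot be realized independently from $H(N)$ over $\vec c$, so the extension property fails there.

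The step I expect to be the real work is the grouping-and-uniformity bookkeeping in $(1)\Rightarrow(2)$: the auxiliary formulas $\psi$ must be chosen exactly as in \cite[\S4]{BPV} so that ``$\varphi(x,\vec h)$ divides over $A$'', as $\vec h$ ranges over $H(M)$, is reduced finitely and uniformly in $A$ to instances of the $Q_{\varphi,\psi}$, and Proposition \ref{H-indtypes} must be invoked to transport the resulting consistency question to an actual $H$-structure. By contrast, the reduction to dividing at the outset and the direction $(2)\Rightarrow(1)$ are comparatively routine once the failure of type-definability has been packaged via compactness.
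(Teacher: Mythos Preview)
Your approach is exactly the paper's: the paper gives no independent proof, stating only that the result ``follows word by word from the proof of \cite[Proposition 4.5]{BPV}, changing the elementary substructure for the predicate $H$,'' and your proposal opens with precisely that plan before sketching how the transcription would go. One small caution about your sketch: in the $(1)\Rightarrow(2)$ direction you invoke that $H$ is an independent set of realizations of $\mathcal{G}$ to constrain the $\cL$-types of tuples $\vec h$, but in an arbitrary $|T|^+$-saturated model of $T^{ind}_{\mathcal{G}}$ this is not yet known (independence of $H$ is only established later, in Proposition~\ref{axioms}, and under the extra hypothesis of elimination of $\exists^{large}$) --- fortunately the BPV argument needs no special property of the predicate, so this detour is unnecessary rather than fatal.
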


In \cite{BPV} it has been proved that, whenever $T$ is a simple theory, its associated theory of lovely pairs $T_P$ is axiomatizable if and only if $T$ is low and the predicates $Q_{\varphi,\psi}$ are type definable. In this case we say that $T$ has the {\it weak non-finite cover property (wnfcp)} (For $T$ stable, it is known that $T_P$ is axiomatizable if and only if $T$ has the nfcp, therefore this definition is rather natural).

\begin{cor}\label{wnfcpaxiom}
Let $T$ be a supersimple theory that satisfies wnfcp. Then the
extension property is first order.
\end{cor}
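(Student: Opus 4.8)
The plan is to deduce this immediately from Proposition \ref{axiomextension} together with the definition of wnfcp. Recall that Proposition \ref{axiomextension} asserts the equivalence between (1) type-definability of every $Q_{\varphi,\psi}$ and (2) the assertion that every $|T|^+$-saturated model of $T^{ind}_{\mathcal{G}}$ satisfies the extension property. By definition, a simple theory $T$ has the weak non-finite cover property precisely when $T$ is low \emph{and} all the predicates $Q_{\varphi,\psi}$ are type-definable. In particular, wnfcp directly supplies hypothesis (1) of Proposition \ref{axiomextension}, so its conclusion (2) holds: every $|T|^+$-saturated model of $T^{ind}_{\mathcal{G}}$ satisfies the extension property, which is exactly the statement that the extension property is first order.

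More explicitly, the single step I would write out is: \emph{Assume $T$ is supersimple and has wnfcp; then in particular each $Q_{\varphi,\psi}$ is type-definable, hence by Proposition \ref{axiomextension} any $|T|^+$-saturated model of $T^{ind}_{\mathcal{G}}$ satisfies the extension property, i.e.\ the extension property is first order.} Nothing beyond unwinding the two definitions is needed, since the content has already been imported via Proposition \ref{axiomextension} (which in turn is quoted from \cite{BPV}).

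There is essentially no obstacle here; this corollary is a bookkeeping remark recording that the abstract hypothesis of Proposition \ref{axiomextension} is implied by the already-named property wnfcp. If one wanted to be careful, the only point worth a sentence is that Proposition \ref{axiomextension} only requires type-definability of the $Q_{\varphi,\psi}$ (and not lowness), so wnfcp is actually stronger than strictly necessary for \emph{this} particular conclusion; lowness will be relevant for the full axiomatizability of $T^{ind}_{\mathcal{G}}$ (i.e.\ also getting the density/coheir property and elimination of $\exists^{large}$), but for the extension property alone, type-definability of the $Q_{\varphi,\psi}$ suffices. I would phrase the corollary's proof as a one-line appeal to Proposition \ref{axiomextension}.
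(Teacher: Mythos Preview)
Your proposal is correct and matches the paper's approach: the corollary is stated without a separate proof, as it follows immediately from the definition of wnfcp (which includes type-definability of every $Q_{\varphi,\psi}$) together with Proposition \ref{axiomextension}. Your additional remark that lowness is not needed for this particular conclusion is accurate but not something the paper comments on.
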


Finally we prove that under some strong assumptions, $T^{ind}_{\mathcal{G}}$ is first order:

\begin{prop}\label{axioms}
Assume $T$ eliminates $\exists ^{large}$ and that the predicates $Q_{\varphi,\psi}$ are $\cL$-type-definable
for all $\cL$-formulas $\varphi(\vec x,\vec y)$,$\psi(\vec y,\vec z)$. Then, if $(M,H)\models T^{ind}_{\mathcal{G}}$ is 
$|T|^+$-saturated then it is an $H$-structure associated to $\mathcal{G}$.
\end{prop}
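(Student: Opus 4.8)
The plan is to verify that a $|T|^+$-saturated model $(M,H)\models T^{ind}_{\mathcal{G}}$ satisfies the four clauses of Definition \ref{Hstructures}. Clauses (1) and (2) are expressible by a (type-definable, hence first-order-consequence) scheme of conditions on $H$ that already holds in every $H$-structure, so they pass to $(M,H)$ by elementarity; in fact (2) is exactly the statement that every finite distinct tuple from $H$ realizes a type of an independent tuple, which is preserved since ``$h_1,\dots,h_n$ are dependent'' is a type-definable condition expressible via the predicates $Q_{\varphi,\psi}$ (or directly, by low-ness/wnfcp-type arguments). Clause (4), the extension property, is handled by Proposition \ref{axiomextension}: since we are assuming all $Q_{\varphi,\psi}$ are $\cL$-type-definable, any $|T|^+$-saturated model of $T^{ind}_{\mathcal{G}}$ satisfies the extension property. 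So the real content is clause (3), the density/coheir property, and this is where elimination of $\exists^{large}$ enters.

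For clause (3), let $\vec a\in M$ be finite and let $q\in S_1(\vec a)$ be a non-forking extension of $\mathcal{G}$; we must find $h\in H(M)$ realizing $q$. By $|T|^+$-saturation it suffices to show that for every $\cL$-formula $\varphi(x,\vec a)\in q$ the partial type $H(x)\wedge q(x)$ is finitely satisfiable, i.e. that there is $h\in H(M)$ with $\models\varphi(h,\vec a)$. Since $\varphi(x,\vec a)\in q$ and $q$ is a non-forking extension of $\mathcal{G}$, the formula $\varphi(x,\vec a)$ is large in the sense defined above. Now use the hypothesis that $T$ eliminates $\exists^{large}$: there is an $\cL$-formula $\psi(\vec y)$ with $\models\psi(\vec a)$ such that $\psi(\vec y)$ holds exactly of the parameters $\vec b$ for which $\varphi(x,\vec b)$ is large. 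The statement ``for all $\vec y$, if $\psi(\vec y)$ then $\exists x\,(H(x)\wedge\varphi(x,\vec y))$'' is an $\cL_H$-sentence, and it holds in every $H$-structure (that is precisely what the density property of Definition \ref{Hstructures}(3) gives, once one knows largeness of $\varphi(x,\vec b)$ is equivalent to $\varphi(x,\vec b)$ containing a non-forking extension of $\mathcal{G}$). Hence this sentence belongs to $T^{ind}_{\mathcal{G}}$ and holds in $(M,H)$, yielding the required $h$. Running this over all $\varphi\in q$ and applying saturation produces a single $h\in H(M)$ realizing $q$.

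The main obstacle is making clause (3) airtight: one must be careful that ``$\varphi(x,\vec a)$ is large'' really is the right first-order surrogate for ``$q$ contains a non-forking extension of $\mathcal{G}$'' — the subtlety being that largeness of $\varphi$ only guarantees \emph{some} realization of $\varphi(x,\vec a)$ is a non-forking extension of $\mathcal{G}$, so to land inside a prescribed $q$ we genuinely need the finite-satisfiability-plus-saturation argument rather than a one-shot application. Secondarily, one should check that ``$h_1,\dots,h_n$ distinct $\Rightarrow$ independent'' survives to the saturated model; this is where the type-definability of $Q_{\varphi,\psi}$ is used again (dependence of a finite tuple over $\emptyset$ is a countable union of type-definable conditions), so the two standing hypotheses are exactly what is needed. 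Once (1)--(4) are in place, $(M,H)$ is by definition an $H$-structure associated to $\mathcal{G}$, completing the proof.
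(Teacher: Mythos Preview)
Your treatment of clauses (1), (3), and (4) is essentially the paper's: clause (1) via the scheme $\forall x\,(H(x)\to\theta(x))$ for each $\theta\in\mathcal{G}$; clause (3) via elimination of $\exists^{large}$ plus saturation, exactly as you describe; clause (4) via Proposition~\ref{axiomextension}. The genuine gap is clause (2). You argue that independence of distinct $H$-tuples transfers because ``dependence is a countable union of type-definable conditions'' (via lowness or the $Q_{\varphi,\psi}$). But this runs in the wrong direction: if dependence is type-definable (or a union of such), then \emph{independence} is only co-type-definable, and there is no mechanism for a co-type-definable condition on $H$-tuples to pass from all $H$-structures to an arbitrary model of $T^{ind}_{\mathcal{G}}$. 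Concretely, even granting that some $\Delta_\varphi(\vec y)$ type-defines ``$\varphi(x,\vec y)$ divides over $\emptyset$'', any finite fragment $\Delta_0\subset\Delta_\varphi$ may well be satisfied by an independent tuple $\vec h\in H$ inside an $H$-structure, so no sentence of the form ``distinct $\vec h\in H$ and $\varphi(h_0,\vec h)$ imply $\neg\bigwedge\Delta_0(\vec h)$'' need belong to $T^{ind}_{\mathcal{G}}$. The predicates $Q_{\varphi,\psi}$ do not help either: they speak about non-dividing over the displayed parameter $\vec c$, not over $\emptyset$.

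The paper handles clause (2) with the \emph{same} hypothesis you used for density, namely elimination of $\exists^{large}$. In any $H$-structure, if $h_0,\dots,h_n\in H$ are distinct then $\tp(h_0/h_1\dots h_n)$ is itself a non-forking extension of $\mathcal{G}$, so every formula $\theta(x,h_1,\dots,h_n)$ it contains is large. Hence for each $\cL$-formula $\theta$ the $\cL_H$-sentence
\[
\forall h_0,\dots,h_n\in H\Bigl(\bigwedge_{i<j}h_i\neq h_j\ \wedge\ \theta(h_0,\dots,h_n)\ \longrightarrow\ \exists^{large}x\,\theta(x,h_1,\dots,h_n)\Bigr)
\]
lies in $T^{ind}_{\mathcal{G}}$. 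In any model $(M,H)$ this scheme forces every formula in $\tp(d/c_1\dots c_n)$, for distinct $d,c_1,\dots,c_n\in H(M)$, to be large and hence non-forking over $\emptyset$, yielding $d\ind c_1\dots c_n$. So the missing idea is simply to reuse $\exists^{large}$ rather than $Q_{\varphi,\psi}$ for the independence clause.
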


\begin{proof}
We start by showing that all elements in $H$ satisfy the partial type $\mathcal G$. Let $\theta(x)\in \mathcal{G}$, then $\forall x ( H(x) \to \theta(x))\in T^{ind}_{\mathcal{G}}$. Thus for $h\in H(M)$, $M\models \theta(h)$
and we have $H(M)\subset \mathcal{G}(M)$.

Let us prove that $(M,H)$ satisfies the density property.
Let $p(x,\vec a)$ be non-forking extension of $\mathcal{G}$. Let
$\varphi(x,\vec a)\in p(x,\vec a)$, so $\varphi(x,\vec a)$ is large. Then 
$M\models \exists^{large}x \varphi(x,\vec a)$ and  all $H$-structures
$(N,H)$ satisfy $$\forall \vec y (\exists^{large}x\varphi(x,\vec y)\to \exists h(H(h)\wedge \varphi(h,\vec y)).$$ Thus there is $h\in H(M)$
such that $M\models \varphi(h,\vec a)$. Since $M$ is $|T|^+$-saturated,
there is $h\in H(M)$ such that $M\models p(h,\vec a)$.

Now we prove that the elements in $H$ are independent. Notice that the following sentence is true in every $H$-structure: $$\forall h_0,h_1,...,h_n\in H((\bigwedge h_i\neq h_j)\wedge \theta(h_0,h_1,...,h_n)\to \exists^{large}x\theta(x,h_1,...,h_n)).$$ Therefore if $d,c_1,\dots,c_n\in H(M)$ are all distinct
and $\theta(x,c_1,\dots,c_n)\in tp(d/c_1,\dots,c_n)$ we have 
$\exists^{large}x \theta(x,c_1,\dots,c_n)$ and in particular $\theta(x,c_1,\dots,c_n)$
does not fork over $\emptyset$. Thus all formulas in the type $tp(d/c_1,\dots,c_n)$ 
do not fork over $\emptyset$ and thus $d\ind \{c_1,\dots,c_n\}$.

Finally by Proposition \ref{axiomextension} any $|T|^+$-saturated model of $T^{ind}_{\mathcal{G}}$ satisfies
the extension property.
\end{proof}

\section{Examples}\label{sec:examples}
In this section we give a list of examples of supersimple theories with a type definable set $\mathcal{G}$ that eliminate
$\exists^{large}$ and where the extension property is first order. We also list some examples that
eliminate the quantifier $\exists^{large}$ but where it remains as an open question if the
extension property is first order.

\subsection{Theories of $SU$-rank one}
In this example we show that the results from \cite{BeVa} are a special case of our new setting. Our presentation 
will be sketchy, for more details see \cite{BeVa}.

Let $T$ be a complete theory of $SU$-rank one and let $\mathcal{G}$ be the intersection of all the 
non-algebraic types (i.e. the set of all formulas whose negation is algebraic). Then $T$ is geometric, i.e. the algebraic closure satisfies the exchange property and $T$ eliminates $\exists^{\infty}$.  In this setting a formula is large if it is non-algebraic and it is small if it is algebraic. Since $T$ eliminates $\exists^\infty$ then $T$ eliminates $\exists^{large}$. Finally since independence is captured by algebraic independence, then the extension property is first order.

\subsection{Differentially closed fields}
Let $T=DCF_0$, the theory of differentially closed fields. This theory is stable of
$U$-rank $\omega$ and also $RM(DCF_0)=\omega$.

First we study the extension property. Recall that $DCF_0$
has quantifier elimination \cite[Theorem 2.4]{Ma} and eliminates imaginaries \cite[Theorem 3.7]{Ma}.
It is proved in \cite[Theorem 2.13]{Ma} that $DCF_0$ has uniform bounding (i.e. it eliminates $\exists^\infty$)
and thus it has nfcp. This is also explicitly explained in \cite[page 52]{Ma}. It follows by
Corollary \ref{wnfcpaxiom} that the extension property is first order.

Let $p(x)$ be the unique generic type of the theory, that is, the type of a differential trascendental. This type
is complete, stationary and definable over $\emptyset$. Choose 
$\mathcal{G}$ to be the type $p(x)$. Let $\varphi(x,\vec y)$ be a formula
and let $\psi(\vec y)$ be its $p$-definition. Then for
$(K,d)\models DCF_0$, $\vec a\in K$, the formula $\varphi(x,\vec a)$ is
large iff $\psi(\vec a)$. Thus this theory eliminates the quantifier
$\exists^{large}$.

Also note that we can choose other partial types for $\mathcal{G}$. For example, 
let $\mathcal{G}$ be the non-algebraic constants, given by the type-definable set 
$\delta(x)=0$, $x\not \in \acl(\emptyset)$. It is proved in \cite[Lemma 5.6]{Ma} that the only
definable subsets of the field of constants are definable in the field language within the field of constants, thus 
the constants are stably embedded and strongly minimal. The partial type 
$\mathcal{G}$ corresponds to the generic type of $\delta(x)=0$, which is stationary 
and definable  over $\emptyset$. As before, the definability of the type implies elimination 
of $\exists^{large}$.

Since beautifull pairs associated to nfcp theories are again nfcp (see \cite{Po}), we can
also work on a lovely pair of differentially closed fields (which has $SU$-rank $\omega^2$) and 
choose $\mathcal{G}$ to be some stationary type in the pair definable over $\emptyset$, for example
the type of the generic elements in the pair.

\subsection{Free pseudoplane-infinite branching tree} \label{freeps}
Let $T$ be the theory of the free pseudoplane, that is, a graph without cycles such that every vertex has infinitely many edges. 
The theory of the free pseudoplane is stable of $U$-rank $\omega$ and $MR(T)=\omega$. For every $A$, $\acl(A)=\dcl(A)=A\cup\{x|\mbox{ there are points } a,b\in A \mbox{ and a path connecting them passing trough } x\}$. For $A$ algebraically closed and $a$ a single element, $U(a/A)= d(a,A)$ where $d(a,A)$ is the minimum length of a path from $a$ to an element of $A$ or $\omega$ if there is no path; in this last case we say that $a$ is at \emph{infinite distance to $A$} or that $a$ is \emph{not connected to $A$}. 
Note that there is a unique generic type over $A$, namely the type of an element which is not connected to $A$.  Choose $\mathcal{G}(x)$ to be this type. The generic type is definable over $\emptyset$ and thus by definability of types $T$ eliminates the quantifier $\exists^{large}$. 

An $H$-structure $(M,H)$ associated to $\mathcal{G}$ is an infinite collection of trees with an infinite collection of selected points $H(M)$ at infinite distance one from the other and with infinite many trees not connected to them. If $(N,H)\models Th(M,H)$, then $N$ has infinitely many selected points $H(N)$ at infinite distance one from the other. 

If $(N,H)$ is $\aleph_0$-saturated, then by saturation it also has infinitely many trees which are not connected to the points $H(N)$. We will prove that in this case $(N,H)$ is an $H$-structure associated to $\mathcal{G}(x)$. Since $T$ eliminates the quantifier $\exists^{large}$,  the density 
property holds as well as the fact that the elements in $H(N)$ are independent (both these properties are easy to check directly too).
Now let $A\subset N$ be finite and assume that $A=\dcl(A)$ and let $c\in N$. If $U(c/A)=\omega$ choose
a point $b$ in a tree not connected to $A\cup H$, then $\tp(c/A)=\tp(b/A)$ and $b \ind_A H$. If $U(c/A)=0$ there is nothing to prove. If $U(c/A)=n>0$, let $a$ be the nearest point from $A$ to $c$. Since there is at most one point of $H$ connected to $a$ and the trees are infinitely branching, we can choose a point $b$ with $d(b,a)=n$ and such that $d(b,A\cup H)= n$; then $\tp(c/A)=\tp(b/A)$ and $b \ind_A H$. This proves that $(N,H)$ is
an $H$-structure and that that $T^{ind}_\mathcal{G}$ is first order.

\subsection{Theories of Morley rank omega with definable Morley rank}\label{omegacase}
Let $T$ be a $\omega$-stable theory of rank $\omega$ and let $M\models T $ be $|T|^+$-saturated. Assume 
also that the Morley rank is definable, that is, for every formula $\varphi(x,\vec y)$ without parameters and every $\alpha\in \{0,1,\dots,\omega\}$ there is a formula $\psi_\alpha(\vec y)$ without parameters such that for $\vec a\in M$, $MR(\varphi(x,\vec a))\geq \alpha$ if and only if $\psi_\alpha(\vec a)$. To simplify the notation, we will write $MR(\varphi(x,\vec a))\geq \alpha$ instead of $\psi_\alpha(\vec a)$. Let $\mathcal{G}$ be the intersection of the types of Morley rank $\omega$, i.e. the formulas whose complement have rank $<\omega$.
We will prove that $T^{ind}_{\mathcal{G}}$ is first order.

Elimination of $\exists^{large}$: Consider first $\varphi(x,\vec y)$ and let $\vec b\in M$. Then $\varphi(x,\vec b)$ is large if and only if $MR(\varphi(x,\vec b))\geq \omega$, so $T$ eliminates the quantifier $\exists^{large}$. 

Extension property: Assume that $(M,H)$ is an $H$-structure associated to $\mathcal{G}$ and let $(N,H)\models Th(M,H)$ be $|T|^+$-saturated.
Let $a\in N$ and let $\vec b\in N$. If $MR(\tp(a/\vec b))=0$ there is nothing to prove. Assume then 
that $MR(\tp(a/\vec b))=n>0$.

Let $\varphi(x,\vec y)\in \tp(a,\vec b)$ with $MR(\varphi(x,\vec b))=n$ and $Md(\varphi(x,\vec b))=Md(\tp(a/\vec b))$. Let $(a',\vec b')\models tp(a,\vec b)$ belong to $M$. Since $(M,H)$ is an $H$-structure, we may assume that $a'\ind_{\vec b'} H$ and thus for every formula 
$\theta(x,\vec y,\vec z)$ and every tuple $\vec h\in H$, if $MR(\theta(x,\vec b',\vec h))<MR(\varphi(x,\vec b'))=n$
then $\neg \theta(x,\vec b',\vec h)\in \tp(a'/\vec b'H)$. So $(M,H)\models \forall d' (MR(\varphi(x,\vec d'))\geq n \to \exists c \varphi(c,\vec d')\wedge 
\forall \vec h\in H (MR(\theta(x,\vec d',\vec h))<n \to \neg \theta(c,\vec d',\vec h))$.

Since $(N,H)\models Th(M,H)$ is $|T|^+$-saturated, we can find $a'$ such that 
$MR(a'/\vec b)\geq n$, $\models \varphi(a',b)$ and whenever $\vec h\in H(N)$ and $\theta(x,\vec b,\vec h)$ is a formula 
with Morley rank smaller than $n$ we have $\neg \theta(a',\vec b,\vec h)$. This shows that 
$MR(a'/\vec bH)=MR(a'/\vec b)=MR(a/\vec b)$, $Md(a'/\vec b)=Md(a/\vec b)$, both $a$ and $a'$ are generics of the formula 
$\varphi(x,\vec b)$ and thus $\tp(a/\vec b)=\tp(a'/\vec b)$. Finally by construction $a'\ind_{\vec b}H$. 
It follows that $T^{ind}_\mathcal{G}$ is first order.

\subsection{$H$-triples}
Recall from \cite{BeVa} that if $T_0$ is supersimple $SU$-rank one theory,  then $T=T_0^{ind}$ is supersimple (of rank $1$ or $\omega$, depending on whether the geometry $T_0$ is trivial or not). The models of $T$ are structures
of the form $(M,H_1)$, where $M\models T_0$ and $H_1$ is a $\acl_0$-dense and $\acl_0$-codense subset of $M$.
We write $\cL_0$ for the language associated to $T_0$ and $\cL$ for the language associated
to $T$. Similarly, we write $\acl_0$ for the algebraic closure in the language $\cL_0$ and for
$A\subset M\models T_0$, we write $S_n^0(A)$ for the space of $\cL_0$-$n$-types over $A$.

In this subsection we  change our notation  and we let $H_2$ be a new predicate symbol that will be interpreted by a dense and codense independent subset of $(M,H_1)$ (in the sense of forking independence in $T$). 

The structures $(M,H_1,H_2)$ were already studied in \cite{BeVa}. We recall the definitions and the main result.
The main tool for studying $T^{ind}_{\mathcal{G}}$ is to take into account the base theory $T_0$ and use triples.

\begin{defn}\label{defnHtriple}
We say that $(M,H_1(M),H_2(M))$ is an $H$-\emph{triple} associated to $T_0$ if:

\begin{enumerate}
\item $M\models T_0$, $H_1(M)$ is an $acl_0$-independent subset of $M$,
$H_2(M)$ is an $acl_0$-independent subset of $M$ over $H_1$.
\item (Density property for $H_1$) If $A\subset M$ is finite and $q\in S_1^0(A)$ is
non-algebraic, there is  $a\in H_1(M)$ such that $a\models q$.
\item (Density property for $H_2/H_1$) If $A\subset M$ is
finite and $q\in S_1^0(A)$ is non-algebraic, there is
$a\in H_2(M)$ such that $a\models q$ and $a\not \in \acl_0(A\cup
H_1(M))$.
\item (Extension property)  If $A\subset M$ is finite and $q\in S_1^0(A)$ is non-algebraic, there is  $a\in
M$, $a\models q$ and $a\not \in \acl_0(A\cup H_1(M)\cup H_2(M))$.
\end{enumerate}
\end{defn}

It is observed in \cite{BeVa}  that if $(M,H_1(M),H_2(M))$, $(N,H_1(N),H_2(N))$ are
$H$-triples, then $Th(M,H_1(M),H_2(M))=Th(N,H_1(N),H_2(N))$ and we
denote the common theory by $T_0^{tri}$.

We will now apply our construction to $T=T_0^{ind}$. Let $\mathcal{G}$ be the partial type in $T$ representing $M\backslash \acl(H_1(M))$. Then an $H$-structure with respect to $\mathcal{G}$ will be given by $(M, H_1, H_2)$ where $H_2(M)\subset M\backslash\acl(H_1(M))$, $H_2(M)$ is forking-independent in the sense of $(M,H_1)$ (which is equivalent to $\acl(-\cup H_1(M))$-independence or, in the notation of \cite{BeVa}, $\scl$-independence), such that for any finite $A\subset M$ and $a\in M$ such that $a\not\in\acl(AH_1(M))$, $\tp(a/A)$ (in the sense of $(M,H_1)$) is realized in $H_2(M)$ and in $M\backslash \acl(A H_1(M) H_2(M))$. This agrees with the notion of $\scl$-structure in \cite{BeVa}.

The following result is proved (in the more general geometric setting) in \cite{BeVa} (Prop. 4.6).

\begin{prop}
Let $T_0$ be an $SU$ rank 1 supersimple theory, let $M\models T_0$ and let
$H_1(M)\subset M$, $H_2(M)\subset M$ be distinguished subsets. Then
$(M,H_1(M),H_2(M))$ is a $\scl$-structure associated to $T_0$ if and only if $(M,H_1(M),H_2(M))$ is an $H$-triple of the theory $T_0$.
\end{prop}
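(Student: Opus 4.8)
The plan is to prove the two implications separately, in both cases reducing everything to the translation dictionary between the $\cL$-language (where $\cL = \cL_0 \cup \{H_1\}$ is the language of $T = T_0^{ind}$) and the $\cL_0$-language, using the facts recalled just above: that for $(M,H_1)\models T$ one has, for a subset $B$, that $B$ is $\acl$-closed (in $T$) iff $B$ is $\acl_0$-closed and contains $H_1(B)$-relatively-appropriate witnesses, that forking independence in $T$ over an $\acl$-closed set agrees with $\acl_0(-\cup H_1)$-independence (equivalently $\scl$-independence in the notation of \cite{BeVa}), and the description of $\mathcal{G}$ as the partial type axiomatizing $M\setminus\acl(H_1(M))$. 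So throughout I would fix $(M,H_1(M),H_2(M))$ and systematically rewrite each clause of Definition \ref{Hstructures} (with $\mathcal{G}$ as chosen, and $H=H_2$) into a statement purely about $\acl_0$ and $H_1$, then compare it clause-by-clause with Definition \ref{defnHtriple}.

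First I would handle the direction ``$\scl$-structure $\Rightarrow$ $H$-triple'': assume $(M,H_1,H_2)$ is an $H$-structure associated to $\mathcal{G}$ in the sense of Definition \ref{Hstructures}, applied to the theory $T = T_0^{ind}$, and check the four clauses of Definition \ref{defnHtriple}. Clause (1) of the triple definition splits into: $M\models T_0$ (immediate since $M\models T$ implies its $\cL_0$-reduct models $T_0$); $H_1(M)$ is $\acl_0$-independent (this is built into $(M,H_1)\models T = T_0^{ind}$ by the axioms of $T_0^{ind}$ recalled at the start of the subsection); and $H_2(M)$ is $\acl_0$-independent over $H_1$ --- this comes from clause (2) of Definition \ref{Hstructures}, which says distinct elements of $H_2$ are forking-independent in $T$, combined with the identification of forking-independence in $T$ with $\acl_0(-\cup H_1)$-independence. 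For clauses (2)--(4) of the triple definition I would note that a $1$-type $q\in S_1^0(A)$ is non-algebraic (over $A$, in $T_0$) exactly when the corresponding $\cL$-type over $A$ extends $\mathcal{G}$; and an $\cL$-type over $A$ is a non-forking extension of $\mathcal{G}$ precisely when it is realized by some $a\notin\acl(A H_1(M))$ that is moreover $\scl$-independent from $H_1$ over $A$ in the right sense. So the density clause (3) of Definition \ref{Hstructures} (for $T$) unwinds to: every non-algebraic $q\in S_1^0(A)$ has a realization in $H_2(M)$ outside $\acl_0(A\cup H_1(M))$ --- i.e. clauses (2) and (3) of the triple --- and the codensity clause (4) of Definition \ref{Hstructures} unwinds to clause (4) of the triple definition. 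The converse direction ``$H$-triple $\Rightarrow$ $\scl$-structure'' is then exactly the same dictionary read in reverse, which is why I would set up all the translations as biconditionals in the first place rather than reproving anything.

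The main obstacle, and the step deserving the most care, is verifying precisely that the class of $\cL$-types over $A$ (for $A$ $\acl$-closed in $T$) that are \emph{non-forking extensions of $\mathcal{G}$} matches, under the dictionary, exactly the $\cL_0$-types realized by elements $a$ with $a\notin\acl_0(A\cup H_1(M))$ together with the independence condition $a\,\scl$-independent from $H_1(M)$ over $A$: one must be careful about the difference between ``$\tp(a/A)$ extends $\mathcal{G}$'' (just $a\notin\acl(AH_1(M))$) and ``$\tp(a/A)$ is a \emph{non-forking} extension of $\mathcal{G}$'' (the extra $\scl$-independence over $A$), and to confirm that this extra condition is automatic in the triple's clauses (2)--(4) because those clauses quantify over $A$ finite and the witnesses are required to avoid $\acl_0(A\cup H_1(M))$, which, combined with the characterization of forking in $T$, gives exactly the needed non-forking. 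I would also double-check that passing between finite parameter sets $A$ and their $\acl$-closures (as in Definition \ref{Hstructures} versus Definition \ref{defnHtriple}, where one may WLOG take $A = \dcl(A)$ or $A=\acl_0(A)$) causes no loss, using that both notions of density/extension are insensitive to replacing $A$ by its algebraic closure. Once these identifications are pinned down, the clause-by-clause matching is routine and the proposition follows; as the excerpt indicates, the substantive content is all in \cite[Prop.~4.6]{BeVa} and this statement is really the observation that our Definition \ref{Hstructures} specializes to the $\scl$-structure notion.
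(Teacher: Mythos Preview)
The paper gives no proof of its own here; it simply cites \cite{BeVa}, Prop.~4.6. Your clause-by-clause dictionary approach is exactly the argument carried out there, and you have correctly located the one point requiring care (that ``non-forking extension of $\mathcal{G}$ over $A$'' in $T$ corresponds to being realized outside $\acl_0(A\cup H_1(M))$).

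Two small remarks. First, clause~(2) of Definition~\ref{defnHtriple} (density of $H_1$) does not unwind from the $H_2$-density clause of Definition~\ref{Hstructures} as your phrasing ``i.e.\ clauses (2) and (3) of the triple'' suggests; it comes instead from $(M,H_1)$ being a sufficiently saturated model of $T_0^{ind}$, hence itself an $H$-structure of $T_0$ --- which you in fact say earlier, so this is just a slip in the write-up. Second, in the converse direction the extension clause~(4) of Definition~\ref{Hstructures} must be verified for \emph{all} $\cL$-types over $A$, not only those extending $\mathcal{G}$, whereas the triple's clause~(4) only treats non-algebraic $\cL_0$-types. For $\cL$-types small over $A$ (realized inside $\acl_0(A\cup H_1(M))$) you need a separate line checking that the required $T$-independence from $H_2(M)$ over $A$ follows from the characterization of forking in $T_0^{ind}$ together with $\acl_0$-independence of $H_2(M)$ over $H_1(M)$; this is routine but should be stated.
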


Thus, to show that the class of $H_2$-structures associated to $T$ is first order, it suffices to prove that this is the case for $H$-triples associated to $T_0$. As pointed out in \cite{BeVa} we have:

\begin{prop}\label{axiomstri}
The theory $T^{tri}$ is axiomatized by:
\begin{enumerate}

\item $M\models T_0$, $H_1(M)$ is an $acl_0$-independent subset of $M$,
$H_2(M)$ is an $acl_0$-independent subset of $M$ over $H_1$.
\item For all $\cL_0$-formulas $\varphi(x,\vec y)$\\
$\forall \vec y (\varphi(x,\vec y)$ nonalgebraic $\implies \exists
x (\varphi(x,\vec y)\wedge x\in H_1)$. 
\item For all
$\cL_0$-formulas $\varphi(x,\vec y)$, $m\in \omega$, and all
$\cL_0$-formulas $\psi(x,z_1,\dots,z_m, \vec y )$ such that for some
$n\in \omega$ $\forall \vec z \forall \vec y \exists ^{\leq n}x
\psi(x,\vec z,\vec y)$
(so $\psi(x,\vec y,\vec z)$ is always algebraic in $x$)\\
$\forall \vec y (\varphi(x,\vec y)$ nonalgebraic $\implies \exists
x (\varphi(x,\vec y)\wedge x \in H_2) \wedge
\\\forall w_1\dots
\forall w_m\in H_1 \neg \psi(x,w_1,\dots,w_m,\vec y))$ \item For
all $\cL_0$-formulas $\varphi(x,\vec y)$, $m\in \omega$, and all
$\cL_0$-formulas $\psi(x,z_1,\dots,z_m, \vec y )$ such that for some
$n\in \omega$ $\forall \vec z \forall \vec y \exists ^{\leq n}x
\psi(x,\vec z,\vec y)$
(so $\psi(x,\vec y,\vec z)$ is always algebraic in $x$)\\
$\forall \vec y (\varphi(x,\vec y)$ nonalgebraic $\implies \exists
x \varphi(x,\vec y) \wedge
\\\forall w_1\dots
\forall w_m\in H_1\cup H_2 \neg \psi(x,w_1,\dots,w_m,\vec y))$

Furthermore, if $(M,H_1,H_2)\models T^{tri}$ is
$|T|^+$-saturated, then $(M,H_1,H_2)$ is an $H$-triple.
\end{enumerate}
\end{prop}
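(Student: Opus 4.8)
The plan is to prove the statement in two halves --- every $H$-triple satisfies the axioms (1)--(4), and every $|T|^+$-saturated model of (1)--(4) is an $H$-triple --- and then to read off the axiomatization. Throughout I use that an $SU$-rank one supersimple theory is geometric, so $T_0$ eliminates $\exists^\infty$; hence ``$\varphi(x,\vec y)$ is non-algebraic'' is definable in $\vec y$, every algebraic $\cL_0$-formula $\psi(x,\vec z,\vec y)$ is \emph{uniformly} algebraic (i.e. $\forall\vec z\,\forall\vec y\,\exists^{\leq n}x\,\psi$ for some $n$), and, crucially, for a finite tuple $\vec a$ the condition ``$a\notin\acl_0(\vec a\cup H_1(M))$'' is equivalent to ``$\neg\psi(a,\vec w,\vec a)$ holds for every uniformly algebraic $\psi$ and every matching-length tuple $\vec w$ from $H_1(M)$'', and likewise with $H_1(M)\cup H_2(M)$ in place of $H_1(M)$. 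With this translation the first half is routine: (1) is the opening clause of Definition \ref{defnHtriple}, and for (2)--(4) one extends a non-algebraic $\varphi(x,\vec a)$ to a non-algebraic $q\in S_1^0(\vec a)$ and applies, respectively, the density property for $H_1$, the density property for $H_2/H_1$, and the extension property of Definition \ref{defnHtriple} to realize $q$ in $H_1(M)$, in $H_2(M)$ outside $\acl_0(\vec a\cup H_1(M))$, and outside $\acl_0(\vec a\cup H_1(M)\cup H_2(M))$; the ``$\notin\acl_0$'' conditions then become exactly the extra conjuncts of (3) and (4).

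The second half is the substantive part. Let $(M,H_1,H_2)\models(1)$--$(4)$ be $|T|^+$-saturated; I would check the four clauses of Definition \ref{defnHtriple}, clause (1) being axiom (1). For density for $H_1$: given finite $A$ and non-algebraic $q\in S_1^0(A)$, each finite conjunction of members of $q$ is again in $q$, hence non-algebraic, so axiom (2) makes $\{x\in H_1\}\cup q(x)$ finitely satisfiable in $M$, and $|T|^+$-saturation realizes it. For density for $H_2/H_1$ and for the extension property one must realize the partial type $\{x\in H_2\}\cup q(x)\cup\{\neg\psi(x,\vec w,\vec a):\psi\ \text{uniformly algebraic},\ \vec w\ \text{from}\ H_1(M)\}$ (resp. the variant with $\vec w$ ranging over $H_1(M)\cup H_2(M)$ and no $H_2$ conjunct), where $\vec a$ enumerates $A$. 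Given a finite subset I would combine the finitely many $\psi_i$ occurring in it into the single uniformly algebraic formula $\psi'(x,\vec z_1,\dots,\vec z_k,\vec y):=\bigvee_i\psi_i(x,\vec z_i,\vec y)$, take $\varphi\in q$ to be the conjunction of the finitely many members of $q$ occurring, and apply axiom (3) (resp. (4)) to $\varphi$ and $\psi'$: this produces $a\in H_2$ (resp. $a\in M$) with $\varphi(a,\vec a)$ and $\neg\psi'(a,\vec w,\vec a)$ for all relevant $\vec w$, witnessing finite satisfiability. Saturation then realizes the whole partial type by some $a$, and the translation above shows $a\notin\acl_0(A\cup H_1(M))$ (resp. $\notin\acl_0(A\cup H_1(M)\cup H_2(M))$), since every algebraic $\cL_0$-formula over that set has the form $\psi(x,\vec w,\vec a)$ with $\psi$ uniformly algebraic and $\vec w$ in the set.

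To conclude: $H$-triples exist (a chain argument in the style of Lemma \ref{existence}) and are pairwise elementarily equivalent by \cite{BeVa}, so $T^{tri}$ is complete; by the first half (1)--(4) follow from $T^{tri}$, and by the second half any model of (1)--(4) has a $|T|^+$-saturated elementary extension that is an $H$-triple, hence a model of $T^{tri}$, so (1)--(4) imply $T^{tri}$ --- giving the axiomatization. The last clause of the statement is precisely the second half. The step I expect to be the main obstacle is the bookkeeping in the second half: one must genuinely use elimination of $\exists^\infty$ (available since $T_0$ is geometric) so that ``being outside $\acl_0$ of a set'' is captured by a first-order scheme over \emph{uniformly} algebraic formulas, and one must group finitely many algebraic formulas into one to match the single-$\psi$ shape of axioms (3) and (4); the remainder is compactness and saturation.
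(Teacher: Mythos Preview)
The paper does not supply its own proof here; the proposition is quoted from \cite{BeVa}. Your argument is the natural one and is essentially correct, but there is one wrinkle in the saturation step worth fixing.

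As written, the partial type you realize for density of $H_2/H_1$ (and similarly for the extension property) carries parameters $\vec w$ ranging over all of $H_1(M)$; since $H_1(M)$ need not have size $<|T|^+$, $|T|^+$-saturation does not apply to this type directly. The repair is immediate: replace each conjunct $\neg\psi(x,\vec w,\vec a)$ by the single $\cL_{H_1,H_2}$-formula $\forall\vec w\in H_1\ \neg\psi(x,\vec w,\vec a)$ (respectively $\forall\vec w\in H_1\cup H_2\ \neg\psi(x,\vec w,\vec a)$). The resulting partial type is over the finite set $A$ and has at most $|T|$ formulas, so $|T|^+$-saturation applies. Your finite-satisfiability argument already establishes this stronger version, since the witness $a$ furnished by axiom (3) or (4) satisfies the universally quantified formula outright. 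With this adjustment the proof is complete.
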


Thus when $T_0$ is a supersimple $SU$-rank one theory,
$T^{ind}_{\mathcal{G}}=T^{tri}$ is first order.

\subsection{$H$ structures of lovely pairs of $SU$-rank one theories}

Let $T$ be a supersimple theory of SU-rank 1, $T_P$ its lovely pairs expansion,
and let $$\ccl(-)=\acl(-\cup P(M))$$ be the small closure operator
in a lovely pair $(M,P)$. Let $\mathcal{G}$ be the formula $\neg P(x)$ in $T_P$  (representing $M\backslash P(M)$).

Our goal is to expand $T_P$ to a theory
$T_P^{ind}$ of $H$-structures of $T_P$ with respect to $\mathcal{G}$,  in the language $\cL_{PH}=\cL_P\cup\{H\}$. Note that the notion of being large in the sense of $\mathcal{G}$ coincides with the corresponding notion induced by the small closure operator $\ccl$.

The following definition is analogous to Definition \ref{defnHtriple}.

\begin{defn}\label{Phstr}
We say that an $\cL_{PH}$-structure $(M,P,H)$ is a $PH$-structure
of $T$ if

\begin{enumerate}

\item $P(M)$ is an elementary substructure of $M$;

\item $H(M)$ is $\acl$-independent over $P(M)$;

\item   for any non-algebraic type $q\in S_1^T(A)$ over a
finite set $A\subset M$, $q$ is realized in

(density of $P$ over $H$) $P(M)\backslash \acl(H(M)A)$;

(density of $H$ over $P$) $H(M)\backslash \acl(P(M)A)$;

(extension) $M\backslash \acl(P(M)H(M)A)$.

\end{enumerate}

\end{defn}

\begin{rema}\label{PH-remark}
(a) It suffices to require $P(M)$ to be dense in the usual sense,
i.e. $q$ having a realization in $P(M)$.Indeed, $H(M)$ is $\acl$-independent over $P(M)$, and if $A$ is finite, then
for some finite $\vec h\subset H(M)$ we have $H(M)\ind_{\vec h P(M)}A P(M)$. Hence $H(M)\backslash\vec h$ is $\acl$-independent over $A P(M)$. Clearly, if $P(M)$ is dense, $q$ can be realized by $b\in P(M)\backslash \acl(\vec h A)$. Then $b\not\in \acl(A H(M))$.

(b) We can get a $PH$-structure from an $H$-triple $(M,H_1,H_2)$ (see previous example), by letting
$P(M)=\acl(H_1)$.

(c) A usual elementary chain argument shows that any $L_{PH}$
structure $(M,P,H)$ satisfying (1,2) embeds in a $PH$-structure
$(N,P,H)$ so that $H(N)\ind_{H(M)} M P(N)$ and $P(N)\ind_{P(M)}
MH(N)$. In particular, $PH$-structures exist.

(d) Reducts $(M,P)$ and $(M,H)$ of $(M,P,H)$ are lovely pairs and
$H$-structures, respectively.

\end{rema}

\begin{prop}
$(M,P,H)$ is an $H$-structure of $T_P$ (with respect to $\mathcal{G}$) if and only if $(M,P,H)$ is a
$PH$-structure.
\end{prop}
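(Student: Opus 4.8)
The plan is to prove the equivalence by unwinding the two definitions and checking that each clause of Definition~\ref{Hstructures} (applied to $T_P$ with $\mathcal{G}=\neg P(x)$) matches the corresponding clause of Definition~\ref{Phstr}, using Remark~\ref{PH-remark} to streamline the density requirements. First I would fix notation: independence in $T_P$ is $\ind^P$, where $\vec a\ind^P_A \vec b$ iff $\vec a\ind_{\acl(A)P(M)}\vec b$ in $T$ and the pair is ``$P$-independent'' in the usual lovely-pair sense; recall that for $T$ of $SU$-rank $1$ this is exactly $\ccl$-independence, i.e. $\acl(-\cup P(M))$-independence. The key translation facts are: (i) a $1$-type over a finite $A$ in $(M,P)$ is a non-forking extension of $\mathcal{G}=\neg P(x)$ precisely when it is a non-algebraic $T$-type that omits $P$, equivalently $a\notin\ccl(A)=\acl(AP(M))$; (ii) $H$-independence in $T_P$, i.e. $\vec a\ind^P_{H(\vec a)}H(M)$, translates (after absorbing the finitely many relevant parameters of $P(M)$) into $H(M)$ being $\acl$-independent over $P(M)$ together with the usual $P$-independence bookkeeping.

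The forward direction: assume $(M,P,H)\models T_P^{ind}$ with respect to $\mathcal{G}$. By Remark~\ref{PH-remark}(d) the reduct $(M,P)$ is a lovely pair, giving clause (1) of Definition~\ref{Phstr} (and the density of $P$, in the plain sense, that Remark~\ref{PH-remark}(a) shows suffices). Clause (2) of Definition~\ref{Hstructures}, that distinct elements of $H(M)$ are $\ind^P$-independent, is exactly clause (2) of Definition~\ref{Phstr} once we observe that for $SU$-rank $1$ theories $\ind^P$-independence of a set of non-$P$ elements is the same as $\acl$-independence over $P(M)$. For clause (3) of Definition~\ref{Phstr}: given a non-algebraic $q\in S_1^T(A)$, $A$ finite, reason as in Remark~\ref{PH-remark}(a) to reduce to a $T_P$-type which is a non-forking extension of $\mathcal{G}$ (so omits $P$ and is non-algebraic) over a slightly enlarged finite parameter set; apply the density/coheir property of the $H$-structure to realize it in $H(M)$ away from $\acl(P(M)A)$, and apply the co-density/extension property to realize it away from $\acl(P(M)H(M)A)$; the density of $P$ over $H$ part is then automatic from loveliness of $(M,P)$ plus the same parameter-absorption trick.

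The converse: assume $(M,P,H)$ is a $PH$-structure. Then $(M,P)$ is a lovely pair, so $\mathcal{G}=\neg P(x)$ makes sense as a partial type in $T_P$ and its non-forking extensions over finite sets are the non-algebraic $T$-types omitting $P$. Clause (1) of Definition~\ref{Hstructures} holds since $H(M)\subseteq M\setminus P(M)$ by clause (2) of Definition~\ref{Phstr} (the elements of $H(M)$ are $\acl$-independent over $P(M)$, in particular not in $P(M)$), so each realizes $\mathcal{G}$. Clause (2) is again the $SU$-rank $1$ identification of $\ind^P$-independence with $\acl$-independence over $P(M)$. For the density/coheir property (clause (3)): a non-forking extension of $\mathcal{G}$ over finite $A$ corresponds to a non-algebraic $q\in S_1^T(A')$ for a suitable finite $A'$ absorbing parameters from $P(M)$; the ``density of $H$ over $P$'' clause of Definition~\ref{Phstr} realizes it in $H(M)$, and this realization is $\ind^P$-generic over $A$ because it avoids $\acl(P(M)A)$. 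For the co-density/extension property (clause (4)): given any $q\in S_1(A)$ in $(M,P)$, use the ``extension'' clause of Definition~\ref{Phstr} to realize the underlying $T$-type away from $\acl(P(M)H(M)A)$, which gives a realization independent (in $T_P$) from $H(M)$ over $A$ after checking the $P$-independence condition, again via Remark~\ref{PH-remark}(a)-style parameter handling.

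The main obstacle I expect is the bookkeeping in matching ``$\ind^P$ over $A$'' with the various $\acl(\cdots)$-conditions, in particular handling the finitely many parameters from $P(M)$ (and from $H(M)$) that must be pulled into the base before the $SU$-rank $1$ geometry can be applied cleanly; Remark~\ref{PH-remark}(a) already contains the essential move (replacing $A$ by $\vec h A$ for suitable finite $\vec h\subset H(M)$, and similarly for $P$), so the proof should mostly be a careful application of that idea in each of the four clauses, together with the standard characterization of forking independence in lovely pairs of $SU$-rank $1$ theories. I would also double-check that the co-density property for imaginary sorts follows from the real-sort version, as remarked after Definition~\ref{Hstructures}, so that no separate argument is needed there.
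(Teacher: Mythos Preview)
Your overall strategy is right and matches the paper's, but there is a genuine gap in the converse direction, in verifying the co-density/extension property of the $H$-structure. You write: ``given any $q\in S_1(A)$ in $(M,P)$, use the `extension' clause of Definition~\ref{Phstr} to realize the underlying $T$-type away from $\acl(P(M)H(M)A)$.'' This only works when $q$ is \emph{large} in the sense of $T_P$. If $q$ is small, i.e.\ any realization $a$ of $q$ lies in $\acl(AP(M))$, then no realization of $q$ can ever be pushed outside $\acl(P(M)H(M)A)$, so the $PH$-extension clause (which only applies to non-algebraic $T$-types and produces realizations in $M\setminus\acl(P(M)H(M)A)$) is simply inapplicable. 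Yet the $H$-structure extension property requires you to realize \emph{every} $T_P$-type over $A$ independently from $H(M)$.

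The paper treats this small case separately and the argument is not just bookkeeping: one reduces to showing that any non-algebraic $T$-type of an element $b\in P(M)$ over $A$ can be realized in $P(M)$ with the realization $\ind^P_A H(M)$. Assuming not, every such realization $b'\in P(M)$ satisfies $b'\in\acl^P(AH(M))$. One then chooses finite $\vec h\subset H(M)$ and $\vec c\subset P(M)$ so that $AH(M)\vec c$ becomes $P$-independent, whence $\acl^P(AH(M)\vec c)=\acl(AH(M)\vec c)$, forcing $b'\in\acl(AH(M)\vec c)$ for all such $b'$. This is then contradicted by the density-of-$P$-over-$H$ clause of Definition~\ref{Phstr} applied to a non-algebraic extension of $\tp(b/A)$ to $A\vec c$. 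Note in particular that it is this clause, not the $PH$-extension clause, that does the work in the small case; your sketch never invokes it on this side.
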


\begin{proof}
Assume first that  $(M,P,H)$ is an $H$-structure. Then the pair
$(M,P)$ is lovely and thus $(M,P,H)$ satisfies the density property
for $P$. Now let $A\subset M$ be finite and let $q\in
S_1(A)$ be non-algebraic. Let $\hat q\in S_1^{P}(A)$ be an
extension of $q$ that contains no small formula with parameters in
$A$. Then by the density/coheir property for $\ccl$ it follows
that there is $a\in H(M)$ such that $a\models \hat q$. In
particular, $a\models q$ and $a\not \in \ccl(A)$ and thus we get
the density property for $H$ over $P$. Finally, by extension property, there exists $c\in M$ realizing $\hat q$ such that
$c\ind^P_{A}H(M)$ (here independence is in the sense of $T_P$). Then $c\not\in \ccl(A\cup H(M))=\acl(A\cup P(M)\cup H(M))$. Thus the
extension property of $PH$-structures holds as well.

Now assume that $(M,P,H)$ is a $PH$-structure. Then by the density property for $P$ and
the extension property it follows that $(M,P)$ is a lovely pair, and $H(M)$ is a
$\ccl$-independent set (hence, forking independent in the sense of $T_P$). 
 Now let $A\subset M$ be finite  and let
$\hat q\in S_1^{P}(A)$ be large. We may enlarge $A$ and
assume that $A$ is $P$-independent. Let $q$ be the restriction of
$\hat q$ to the language $\cL$. Note that $\hat q$ is the unique
extension of $q$ to a non-small type. By the density for $H$ over
$P$, there is $a\in H(M)$ such that $a\models q$, $a\not \in
\ccl(A)$ and thus $a\models \hat q$. This proves the density/coheir property of $H$-structures. 

To prove the extension property of $H$-structures, we start with $a\in M$ and a finite $A\subset M$. We may assume that $A$ is $P$-independent.If $a$ is large over $A$ in $T_P$, then by the extension property for $PH$-structures, there is $a'\in M$ such that $\tp(a'/A)=\tp(a/A)$ and $a'\not\in\acl(P(M)H(M))$. Then $\tp^P(a'/A)=\tp^P(a/A)$ (where $\tp^P$ denotes the type in the sense of $T_P$) and $a'\ind^P_A H(M)$, as needed.

Now, suppose $a$ is small over $A$ in $T_P$. Thus, $a\in\acl(AP(M))$. Then it suffices to show that for any  tuple $\vec b\in P(M)$, $\acl$-independent over $A$, there exists $\vec b'\in P(M)$ such that $\tp(\vec b'/A)=\tp(\vec b/A)$ and $\vec b'\ind^P_A H(M)$. We may also assume that $\vec b = b$ is a single element. Suppose for any $b'\in P(M)$ realizing $\tp(b/A)$ we have that $\tp^P(\vec b'/AH(M))$ forks over $A$. Since $P(M)$ has SU-rank 1, this means that $b'\in\acl^P(A H(M))$ for any $b'\in P(M)$ realizing $\tp(b/A)$. Next, as in Remark \ref{PH-remark}(a), we have $H(M)\ind_{\vec h P(M)}A P(M)$. Since $H(M)$ is $\acl$-independent over $P(M)$, we also have $H(M)\ind_{\vec h}A P(M)$. Let $\vec c\in P(M)$ be a finite tuple such that $A \vec h \ind{\vec c} P(M)$. Then $AH(M)\ind_{\vec c} P(M)$. Hence, $AH(M)\vec c$ is $P$-independent, and $\acl^P(AH(M)\vec c)=\acl(AH(M)\vec c)$.
Therefore, $b'\in\acl(A H(M)\vec c)$ for any $b'\in P(M)$ realizing $\tp(b/A)$.  But by density of $P$ over $H$, we can find $b'\in P(M)$ realizing a nonalgebraic extension of $\tp(b/A)$ to $A\vec c$ such that $b'\not\in \acl(A\vec c H(M))$, a contradiction.

\end{proof}

We will now show that the class of $PH$-structures is "first
order", that is, that there is a set of axioms whose
$|T|^+$-saturated models are the $PH$-structures. The axiomatization works
as in $H$-triples.

\begin{prop}\label{axiomsPH}
Assume $T$ eliminates $\exists ^{\infty}$. Then the theory
$T_{PH}$ is axiomatized by:
\begin{enumerate}
\item $T$ \item axioms saying that $P$ distinguishes an elementary
substructure.

\item For all $\cL$-formulas $\varphi(x,\vec y)$\\
$\forall \vec y (\varphi(x,\vec y)$ nonalgebraic $\implies \exists
x (\varphi(x,\vec y)\wedge x\in P))$.

 \item For all
$\cL$-formulas $\varphi(x,\vec y)$, $m\in \omega$, and all
$\cL$-formulas $\psi(x,z_1,\dots,z_m, \vec y )$ such that for some
$n\in \omega$ $\forall \vec z \forall \vec y \exists ^{\leq n}x
\psi(x,\vec z,\vec y)$
(so $\psi(x,\vec y,\vec z)$ is always algebraic in $x$)\\
$\forall \vec y (\varphi(x,\vec y)$ nonalgebraic $\implies \exists
x (\varphi(x,\vec y)\wedge x \in H) \wedge
\\\forall w_1\dots
\forall w_m\in P \neg \psi(x,w_1,\dots,w_m,\vec y))$

 \item For all
$\cL$-formulas $\varphi(x,\vec y)$, $m\in \omega$, and all
$\cL$-formulas $\psi(x,z_1,\dots,z_m, \vec y )$ such that for some
$n\in \omega$ $\forall \vec z \forall \vec y \exists ^{\leq n}x
\psi(x,\vec z,\vec y)$
(so $\psi(x,\vec y,\vec z)$ is always algebraic in $x$)\\
$\forall \vec y (\varphi(x,\vec y)$ nonalgebraic $\implies \exists
x (\varphi(x,\vec y)\wedge x\not \in P\wedge x \not \in H) \wedge
\\\forall w_1\dots
\forall w_m\in P\cup H \neg \psi(x,w_1,\dots,w_m,\vec y))$

Furthermore, if $(M,P,H)\models T_{PH}$ is $|T|^+$-saturated, then
$(M,P,H)$ is a $PH$-structure.
\end{enumerate}
\end{prop}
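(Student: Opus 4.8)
The plan is to follow the template already used for $H$-triples in Proposition~\ref{axiomstri}, now carrying the extra predicate $P$ for the elementary substructure and using that $T$ eliminates $\exists^\infty$ to make the density and extension schemata first-order. First I would verify the soundness direction: any $PH$-structure $(M,P,H)$ satisfies (1)--(5). Axioms (1),(2) are immediate from the definition of a lovely pair; (3) is the density of $P$, which holds since $(M,P)$ is lovely; (4) says that a nonalgebraic $\varphi(x,\vec y)$ has a realization in $H$ avoiding every algebraic-in-$x$ configuration $\psi$ over finitely many parameters from $P$, which is exactly density of $H$ over $P$ combined with the observation (as in Remark~\ref{PH-remark}(a)) that avoiding all such $\psi$ over $P$ is the same as lying outside $\acl(A\,P(M))$; and (5) is density of the ``outside'' type, i.e.\ the realization lies outside $\acl(A\,P(M)\,H(M))$, obtained in the same way from the extension clause of Definition~\ref{Phstr}. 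Here elimination of $\exists^\infty$ is what guarantees that ``$\psi(x,\vec z,\vec y)$ is algebraic in $x$'' is captured uniformly by a bound $\exists^{\le n}x\,\psi$, so the schemata (4),(5) are genuine first-order axiom schemes.

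Next I would prove completeness of the axiomatization, i.e.\ that any $|T|^+$-saturated model $(M,P,H)$ of $T_{PH}$ is a $PH$-structure. Axioms (1),(2) give $M\models T$ and $P(M)\preceq M$; together with (3) and $|T|^+$-saturation, standard lovely-pair arguments (as in \cite{Va1,BPV}) show $(M,P)$ is a lovely pair. From (4), arguing as in the $H$-triple case, the elements of $H(M)$ are $\acl$-independent over $P(M)$: if $h\in H(M)$ and $h\in\acl(P(M)\,\vec h\,)$ for some finite $\vec h\subset H(M)\setminus\{h\}$, then taking $\psi$ witnessing this algebraicity and $\varphi(x,\vec y)\in\tp(h/\vec h\,)$ nonalgebraic contradicts the instance of (4) at $\vec y=\vec h$ (after absorbing $\vec h$ appropriately). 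For density of $H$ over $P$: given a nonalgebraic $q\in S_1(A)$, $A$ finite, apply (4) to a formula in $q$ using saturation to sweep through all of $q$ and all finitely many parameters from $P(M)$, producing $a\in H(M)$, $a\models q$, $a\notin\acl(A\,P(M))$. Density of $P$ over $H$ and the extension property are obtained the same way from (3) (together with the Remark~\ref{PH-remark}(a) trick to pass from plain density of $P$ to density over $H$) and from (5), respectively; in each case $|T|^+$-saturation upgrades a single-formula statement to a type statement.

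The main obstacle is the completeness direction, and specifically the bookkeeping needed to deduce the ``avoiding $\acl$ over a union'' clauses from schemata that only mention finitely many parameters from $P$ (resp.\ $P\cup H$) at a time. One has to argue that if a realization of $q$ avoids $\psi(x,\vec w,\vec y)$ for \emph{all} finite tuples $\vec w$ from $P(M)$ (resp.\ $P(M)\cup H(M)$) and all algebraic-in-$x$ formulas $\psi$, then it lies outside $\acl(A\,P(M))$ (resp.\ $\acl(A\,P(M)\,H(M))$); this is where elimination of $\exists^\infty$ and a compactness/saturation argument combine, and where one must be careful that $P(M)\cup H(M)$ can be arranged $\acl$-independent over the base after moving finitely many elements of $H(M)$ into the base, exactly as in Remark~\ref{PH-remark}(a) and in the forking computation in the proof of the previous proposition. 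Once this translation is in hand, the verification that the resulting structure satisfies all clauses of Definition~\ref{Phstr} is routine.
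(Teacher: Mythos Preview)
Your approach is exactly what the paper intends: the paper gives no proof and simply says ``The axiomatization works as in $H$-triples'', and your proposal carries out precisely that adaptation. The soundness direction and the saturation/compactness arguments for the density and extension clauses are correct.

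There is, however, one genuine gap. You claim that $\acl$-independence of $H(M)$ over $P(M)$ follows from axiom (4), ``arguing as in the $H$-triple case''. But axiom (4) is purely existential: it asserts that \emph{some} element of $H$ realizes $\varphi$ while avoiding the algebraic $\psi$-formulas, not that \emph{every} element of $H$ does. So your contradiction argument (``taking $\psi$ witnessing this algebraicity and $\varphi\in\tp(h/\vec h)$ nonalgebraic contradicts the instance of (4)'') does not go through: (4) would only produce some \emph{other} $h'\in H$ avoiding $\psi$, not force $h$ itself to avoid it. Moreover, in the $H$-triple case this independence is not derived from the density schema at all; it is explicitly part of axiom (1) of Proposition \ref{axiomstri} (``$H_2(M)$ is an $\acl_0$-independent subset of $M$ over $H_1$''), which is first-order by elimination of $\exists^\infty$. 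The list (1)--(5) in the present proposition appears to omit the analogous clause ``$H(M)$ is $\acl$-independent over $P(M)$''. You should either add this as an explicit axiom schema (which is harmless and clearly first-order), or note the omission; without it the ``Furthermore'' clause cannot be established from (1)--(5) alone.
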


Now we list a family of structures of $SU$-rank $\omega$ where we do not know 
if the corresponding theory of $H$-structures is axiomatizable. In all three cases 
it is open whether or not the extension property is first order.

\subsection{Vector space with a generic automorphism}

Let $T$ be the theory of an infinite vector space over
a division ring $F$; it is strongly minimal, has DMP and quantifier elimination. 
Recall that the definable closure of a set inside models of $T$ corresponds to the linear 
span of the set. For a tuple $\vec a$ in a model of $T$ we write $qftp^{-}(\vec a)$ for its quantifier free type (which isolates the type) and $\dcl^{-}(\vec a)$ for its definable closure. Let $\sigma$ be a new function symbol 
and let $T_\sigma$ be theory $T$ together with axioms stating that $\sigma$ is a generic 
automorphism. The notes \cite{Pi2} deal with strongly minimal theories with a generic automorphism,
the theory  $T_\sigma$ appears there as an example. In particular it is shown in 
\cite[Proposition 3.9]{Pi2} how to axiomatize $T_\sigma$. By \cite[Corollary 4.14 and Lemma 4.16]{Pi2} 
the theory has $SU$-rank $\omega$ and by \cite[Example 5.7]{Pi2}  it is stable. It is also proved 
in \cite[Remark 3.15]{Pi2} that for $(M,\sigma)\models T_{\sigma}$ and $\vec a\in M$,
the algebraic closure of $\vec a$ in $T_{\sigma}$ denoted by $\dcl(\vec a)$ corresponds to $\dcl^-(\{\sigma^{i}(\vec a): i\in \mathbb{Z}\})$

We claim that $T_{\sigma}$ has quantifier elimination. Indeed, let $(M,\sigma)\models T_{\sigma}$ be saturated
and let  $\vec a,\vec a'\in (M,\sigma)$
be such that $qftp^{-}(\ldots,\sigma^{-1}(\vec a), \vec a
,\sigma(\vec a),\sigma^2(\vec a),\ldots)=$  \\$qftp^{-}(\ldots,\sigma^{-1}(\vec a'), \vec a'
,\sigma(\vec a'),\sigma^2(\vec a'),\ldots)$. Let $c\in M$, we will find $c'\in M$ such that \\
$qftp^-(\ldots,\sigma^{-1}(\vec a c), \vec a c
,\sigma(\vec a c),\sigma^2(\vec a c),\ldots)= qftp^-(\ldots,\sigma^{-1}(\vec a' c'), \vec a'c'
,\sigma(\vec a' c'),\sigma^2(\vec a' c'),\ldots)$. Consider the substructure $\dcl(\vec a, c)$ and the 
free amalgamation of
$(M,\sigma)$ with $\dcl(\vec a, c)$ that identifies the substructure $\dcl(\vec a')$ of $M$ 
with the substructure $\dcl(\vec a)$ of $\dcl(\vec ac)$. Then this amalgamation is a
superstructure of $(M,\sigma)$ and the copy of $c$ in the amalgamation has the desired type. Now use
that $(M,\sigma)$ is existentially closed and saturated to get $c'$.

The theory $T_\sigma$ has a unique type of $U$-rank $\omega$, namely the type of a transformally independent element.
Take $\mathcal{G}$ to be this type, which is definable over $\emptyset$. By definability of types,
$T_{\sigma}$ eliminates the quantifier $\exists^{large}$.

\textbf{Question} Is the extension property first order for $T_\sigma$?

We can choose other partial types for $\mathcal{G}$. For example, 
let $\mathcal{G}$ be $\sigma(x)=x$, $x\not \in \dcl^-(\emptyset)$. By quantifier elimination
and stability the set $\sigma(x)=x$ is strongly minimal. The partial type 
$\mathcal{G}$ corresponds to the generic type of this strongly minimal set, which is stationary 
and definable  over $\emptyset$. The definability of the type implies elimination 
of $\exists^{large}$.

\subsection{ACFA}
Let $T=ACFA$, (a completion) of the theory of algebraically closed fields
with a generic automorphism. This theory is simple of $SU$-rank $\omega$
and it is unstable.

Let $p(x)$ be the generic type of the theory,
namely the type of a transformally independent element. This type is complete,
stationary and definable over $\emptyset$. Let $\mathcal{G}=p(x)$. Let $\varphi(x,\vec y)$ be a formula
and let $\psi(\vec y)$ be its $p$-definition. Then for
$(K,\sigma)\models ACFA$, $\vec a\in K$, the formula $\varphi(x,\vec a)$ is
large iff $\psi(\vec a)$. Thus this theory eliminates the quantifier
$\exists^{large}$.

\textbf{Question} Is the extension property first order for ACFA? Does ACFA satisfy wnfcp?

\subsection{Hrushovski amalgamation without collapsing}

In this subsection we follow the presentation of Hrushovski amalgamations from \cite{Zi}, all the results we mention
can be found in \cite{Zi}.
Let $\cL=\{R\}$ where $R$ stands for a ternary relation. We let $\mathcal{C}$ be the class of $\cL$-structures
where $R$ is symmetric and not reflexive. For $A\in \mathcal{C}$ a finite structure we let 
$\delta(A)=|A|-|R(A)|$ and we let  $\mathcal{C}^0_{fin}$ be the subclass of $\mathcal{C}$ consisting of all finite $\cL$-structures $M$ where for $A\subset M$ we have $\delta(A)\geq 0$. Finally $M^0$ stands for the Fra\"iss\'e limit of the 
class $\mathcal{C}^0_{fin}$. Let $T_0$ be the theory of $M^0$, then $MR(T_0)=\omega$ and $Md(T_0)=1$. 

Now let $M\models T_0$ and for $A\subset M$ finite we define $d(A)=\inf\{\delta(B): A\subset B\}$. Then $d$ is the dimension function of a pregeometry and that for an element $a$ and a set $B$, $d(a/B)=1$ if and only if $MR(a/B)=\omega$ if and only if $U(a/B)=\omega$. The type $p(x)$ described by $d(x)=1$ is stationary and definable over $\emptyset$, if we choose $\mathcal{G}=p(x)$ then $T_0$ eliminates the quantifier $\exists^{large}$.

\textbf{Question} Is the extension property first order for $T_0$? Does $T_0$ satisfy nfcp?

\section{Definable sets in $H$-structures}\label{sec:definablesets}

Fix $T$ a supersimple theory, $\mathcal{G}$ a type-definable set over $\emptyset$, 
and assume that $T$ eliminates $\exists^{large}$ and that the extension property is first order.
Let $(M,H(M))\models T^{ind}_{\mathcal{G}}$. Our next
goal is to obtain a description of definable subsets of $M$ and
$H(M)$ in the language $\cL_H$.

\begin{notation}
Let $(M,H(M))$ be an $H$-structure. Let $\vec a$ be a tuple in $M$.
We denote by $\etp_H(\vec a)$ the collection of formulas of the
form $\exists x_1\in H\dots \exists x_m\in H\varphi(\vec x,\vec y)$, where
$\varphi(\vec x,\vec y)$ is an $\cL$-formula such that
there exists $\vec h\in H$ with $M\models \varphi(\vec h,\vec a)$.
\end{notation}

The next results appears in \cite{BV-Tind} in the setting of geometric theories. The proof
we present here is a translation of Corollary 3.11 \cite{BPV} to the setting of
$H$-structures.

\begin{lem}\label{AlmostQE}
Let $(M,H(M))$, $(N,H(N))$ be $H$-structures. Let $\vec a$, $\vec b$
be tuples of the same arity from $M$, $N$ respectively. Then the
following are equivalent:
\begin{enumerate}
\item $\etp_H(\vec a)=\etp_H(\vec b)$.
\item $\vec a$, $\vec b$ have the same $\cL_H$-type.
\end{enumerate}
\end{lem}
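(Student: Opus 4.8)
The plan is to prove Lemma \ref{AlmostQE} by a back-and-forth argument, exactly as one proves an "almost quantifier elimination" result: formulas of the form $\exists \vec x \in H\, \varphi(\vec x, \vec y)$ together with their negations should suffice to axiomatize $\cL_H$-types. The direction (2)$\Rightarrow$(1) is trivial, since each formula counted in $\etp_H(\vec a)$ is in particular an $\cL_H$-formula. So the work is all in (1)$\Rightarrow$(2).

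First I would reduce to a statement amenable to Proposition \ref{H-indtypes}. Given tuples $\vec a$ in $M$ and $\vec b$ in $N$ with $\etp_H(\vec a)=\etp_H(\vec b)$, I want to produce, after moving to sufficiently saturated $H$-structures, $H$-independent tuples over which Proposition \ref{H-indtypes} applies. The key observation is that knowing $\etp_H(\vec a)$ lets one recover $H(\acl_H(\vec a))$ up to $\cL$-type: the formula "$\exists \vec x\in H$ such that $\varphi(\vec x,\vec y)$ and $\vec x$ is $\cL$-algebraic over $\vec y$" detects which algebraic-over-$\vec a$ configurations of $H$-points occur, and the $\cL$-type of those points over $\vec a$ is pinned down because, for algebraic formulas, $\exists \vec x\in H\,\varphi$ and $\exists \vec x\in H\,\psi$ together with the $\cL$-fact $\forall \vec x(\varphi \leftrightarrow \psi)$ (which is itself an $\cL$-formula, hence read off from $\etp_H$ by taking $\varphi$ with no $H$-variables) force the same algebraic points. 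So from $\etp_H(\vec a)=\etp_H(\vec b)$ I get a finite tuple $\vec h$ of $H$-points $\cL$-algebraic over $\vec a$, and correspondingly $\vec h'$ over $\vec b$, with $\tp(\vec a \vec h)=\tp(\vec b \vec h')$, and I may assume $H(\acl(\vec a))\subseteq \dcl(\vec a \vec h)$, likewise on the other side. Replacing $\vec a$ by $\vec a \vec h$ and $\vec b$ by $\vec b \vec h'$ (this only enlarges $\etp_H$ in a controlled way and does not change the conclusion), I may assume $H(\acl(\vec a))=H(\vec a)$ and similarly for $\vec b$.

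Next I would arrange $H$-independence. By supersimplicity there is a finite tuple $\vec k \in H(M)$ with $\vec a \ind_{\vec a \vec k} H(M)$; the $\cL$-type of $\vec a \vec k$ is determined by $\etp_H(\vec a)$ via a suitable instance $\exists \vec x\in H\,\theta(\vec x,\vec y)$ where $\theta$ also records (using an $\cL$-formula witnessing non-forking, or by exhausting enough instances) that the witness is $H$-independent with $\vec a$; more carefully, since $\vec a \vec k$ is $H$-independent, Proposition \ref{H-indtypes} applied in the two saturated models shows that any realization $\vec k'$ in $N$ of the $\cL$-type of $\vec k$ over $\vec a$ — which exists and is in $H(N)$ because the relevant $\exists \vec x\in H$ formula lies in $\etp_H(\vec a)=\etp_H(\vec b)$ — makes $\vec b \vec k'$ $H$-independent with $\tp(\vec a \vec k)=\tp(\vec b \vec k')$. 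Then $\tp(\vec a \vec k, H(\vec a \vec k))=\tp(\vec b \vec k', H(\vec b \vec k'))$ because $H(\vec a \vec k)\subseteq \vec k \cup H(\vec a)$ is captured by the choices just made, so Proposition \ref{H-indtypes} gives $\tp_H(\vec a \vec k)=\tp_H(\vec b \vec k')$, and restricting to the original coordinates yields $\tp_H(\vec a)=\tp_H(\vec b)$.

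The main obstacle I anticipate is the bookkeeping in the reduction step: showing that $\etp_H(\vec a)$ genuinely determines the finitary data "$H(\acl(\vec a))$ up to $\cL$-type over $\vec a$" and "$\cL$-type of a finite $H$-independence base of $\vec a$", using only positive-existential-over-$H$ formulas plus their negations. The delicate point is that $\etp_H$ is closed under neither negation nor conjunction a priori, so one must argue that the collection of formulas $\{\exists \vec x \in H\,\varphi : \varphi \in \cL, \text{witnessed}\}$ together with their negations (which are implied by the complementary $\etp_H$-membership in the other tuple) is rich enough — this is precisely the content of the cited Corollary 3.11 of \cite{BPV}, and I would follow that proof line by line, replacing the predicate for an elementary substructure by $H$ throughout, and invoking Proposition \ref{H-indtypes} in place of the pair analogue. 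Everything else is a routine transcription.
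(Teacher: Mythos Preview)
Your proposal has a genuine gap at the crucial step, and the reduction you sketch beforehand is unnecessary.

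The gap is here: having found $\vec k\in H(M)$ with $\vec a\ind_{\vec k}H(M)$ and then some $\vec k'\in H(N)$ with $\tp(\vec b\vec k')=\tp(\vec a\vec k)$ (this much does follow from $\etp_H(\vec a)=\etp_H(\vec b)$ and saturation), you assert that $\vec b\vec k'$ is $H$-independent. You justify this by invoking Proposition~\ref{H-indtypes}, but that proposition takes $H$-independence of \emph{both} tuples as a hypothesis and concludes equality of $\cL_H$-types; it does not transfer $H$-independence from one tuple to another. Nor can you use ``an $\cL$-formula witnessing non-forking'': $H$-independence of $\vec b$ over $\vec k'$ means $\vec b\ind_{\vec k'}H(N)$, and forking with the whole predicate $H(N)$ is not an $\cL$-property of the finite tuple $\vec b\vec k'$ alone. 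So nothing in your outline establishes $\vec b\ind_{\vec k'}H(N)$, and this is exactly the heart of the lemma.

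The paper (following \cite{BPV}, Corollary~3.11) handles this point by a local-rank contradiction, not by a positive transfer. One picks $\vec h_{\vec a}\in H(M)$ with $\vec a\ind_{\vec h_{\vec a}}H(M)$ and, via $\etp_H$, a $\vec h_{\vec b}\in H(N)$ realizing $\tp(\vec h_{\vec a}/\vec a)$ over $\vec b$. Suppose $\tp(\vec b/H(N))$ forks over $\vec h_{\vec b}$. Then there is a formula $\psi(\vec x,\vec c)$ with $\vec c\in H(N)$ and some $D(-,\varphi,k)$-rank such that $D(\psi(\vec x,\vec c),\varphi,k)<n=D(\tp(\vec b/\vec h_{\vec b}),\varphi,k)$. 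The condition ``$D(\psi(\vec x,\vec z),\varphi,k)<n$'' is $\cL$-definable in $\vec z$, say by $\chi(\vec z)$. Now $\exists\vec z\in H\,(\psi(\vec x,\vec z)\wedge\chi(\vec z))$ lies in $\etp_H(\vec b)=\etp_H(\vec a)$, so some $\vec c'\in H(M)$ witnesses $\psi(\vec a,\vec c')\wedge\chi(\vec c')$, forcing $\tp(\vec a/H(M))$ to fork over $\vec h_{\vec a}$ --- a contradiction. This is the missing idea: \emph{dependence} over $H$ is what is witnessed by an $\etp_H$-formula, so one argues by contradiction rather than trying to certify independence positively. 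Your preliminary reduction to $H(\acl(\vec a))=H(\vec a)$ plays no role and can be dropped.
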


\begin{proof}
Clearly (2) implies (1). Assume (1), then  $\tp(\vec a)=\tp(\vec b)$.

Let $\vec h_{\vec a}$ be an enumeration of a finite subset of $H(M)$ such that $\tp(\vec a/H(M))$ does not fork over $\vec h_{\vec a}$. Let $q(\vec z, \vec a)=\tp(\vec h_{\vec a},\vec a)$. Since $\etp_H(\vec a)=\etp_H(\vec b)$, $q(z,\vec b)$ is finitely satisfiable in $H(N)$, and since $(N,H(N))$ is an $H$-structure, it is realized in $H(N)$ say by $\vec h_{\vec b}$.

\textbf{Claim} $tp(\vec b/H(N))$ does not fork over $\vec h_{\vec b}$. 

Suppose, in order to get a contradiction, that $tp(\vec b/H(N))$ forks over $\vec h_{\vec b}$. Then there are $\cL$-formulas $\varphi(\vec x,\vec y)$, $\psi(\vec x,\vec z)$, $k < \omega$,  and 
$\vec c \in H(N)$ such that $\psi(\vec b,\vec c)$ holds and $D(\psi(\vec x,\vec c),\varphi,k) < n$ where $n = D(\tp(\vec b/\vec h_{\vec b}),\varphi,k)$. Now there is a formula $\chi (\vec z) \in \tp(\vec c)$ such that for any $\vec c'\models \chi(\vec z)$, 
$D(\psi(\vec x, \vec c'), \varphi, k) < n$. As $\etp_H(\vec a)=\etp_H(\vec b)$, there is $\vec c' \in H(M)$ such that $\psi(\vec a,\vec c')\wedge \chi(\vec c')$, implying that $tp(\vec a/H(M))$ forks over $\vec h_{\vec a}$, a contradiction.
So $tp(\vec b/H(N))$ does not fork over $\vec h_{\vec b}$. Note that $\tp(\vec a \vec h_{\vec a})=\tp(\vec b\vec h_{\vec b})$ and both tuples
are $H$-independent, so by Proposition \ref{H-indtypes} $\tp_H(\vec a)=\tp_H(\vec b)$.

\end{proof}

Note that Lemma \ref{AlmostQE} hold for tuples in $M^{eq}$. 

The previous result implies near model completeness:  every $\cL_H$-definable set can be written as a boolean combination
of formulas of the form $\exists x_1\in H\dots \exists x_m\in H\varphi(\vec x,\vec y)$, where
$\varphi(\vec x,\vec y)$ is an $\cL$-formula. 
We will refine this result, starting by understanding the 
$\cL_H$-definable subsets of $H(M)$.
This material is very similar to the results presented in \cite{BV-Tind}.

\begin{lem}\label{Hsubsets}
Let $(M_0,H(M_0))\preceq (M_1,H(M_1))$ and assume that
$(M_1,H(M_1))$ is $|M_0|$-saturated. Then $M_0$ (seen as a subset
of $M_1$) is an $H$-independent set.
\end{lem}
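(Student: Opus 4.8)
The plan is to unwind the definition: $H$-independence of $M_0$ inside $(M_1,H(M_1))$ means $M_0\ind_{H(M_0)}H(M_1)$ in the sense of $T$, and by the finite character of forking it suffices to fix a finite tuple $\vec a\subseteq M_0$ and show $\vec a\ind_{H(M_0)}H(M_1)$. Since $T$ is supersimple, I can fix a finite subtuple $\vec h\subseteq H(M_0)$ such that $\tp(\vec a/H(M_0))$ does not fork over $\vec h$. I then claim it is enough to prove that $\tp(\vec a/H(M_1))$ does not fork over $\vec h$: given this, monotonicity of $\ind$ together with $\vec h\subseteq H(M_0)\subseteq H(M_1)$ yields $\vec a\ind_{H(M_0)}H(M_1)$, as wanted.

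To prove the claim I argue by contradiction, using the same local-rank mechanism as in the proof of Lemma \ref{AlmostQE}. If $\tp(\vec a/H(M_1))$ forks over $\vec h$, then forking is witnessed by a drop of a local $D$-rank: there are an $\cL$-formula $\varphi(\vec x,\vec w)$, an integer $k$, a finite tuple $\vec c\subseteq H(M_1)$ and a formula $\psi(\vec x,\vec h,\vec c)\in\tp(\vec a/\vec h\vec c)$ with $D(\psi(\vec x,\vec h,\vec c),\varphi,k)<n$, where $n=D(\tp(\vec a/\vec h),\varphi,k)$. By definability/upper semicontinuity of the local $D$-rank there is a formula $\chi(\vec w,\vec h)\in\tp(\vec c/\vec h)$ such that $D(\psi(\vec x,\vec h,\vec c'),\varphi,k)<n$ for every $\vec c'$ satisfying $\chi(\vec w,\vec h)$.

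Now consider the $\cL_H$-formula with parameters $\vec a,\vec h\in M_0$
\[
  \exists\vec z\,\bigl(\,\textstyle\bigwedge_i H(z_i)\ \wedge\ \chi(\vec z,\vec h)\ \wedge\ \psi(\vec a,\vec h,\vec z)\,\bigr).
\]
It holds in $(M_1,H(M_1))$, witnessed by $\vec c$, so by $(M_0,H(M_0))\preceq(M_1,H(M_1))$ it holds in $(M_0,H(M_0))$: there is $\vec c'\subseteq H(M_0)$ with $\chi(\vec c',\vec h)$ and $\models\psi(\vec a,\vec h,\vec c')$. Then $\psi(\vec x,\vec h,\vec c')\in\tp(\vec a/\vec h\vec c')$ satisfies $D(\psi(\vec x,\vec h,\vec c'),\varphi,k)<n=D(\tp(\vec a/\vec h),\varphi,k)$, so $\tp(\vec a/\vec h\vec c')$ forks over $\vec h$ while $\vec c'\subseteq H(M_0)$, contradicting the choice of $\vec h$. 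This establishes the claim and hence the lemma. The $|M_0|$-saturation hypothesis on $(M_1,H(M_1))$ is used only to guarantee, via Proposition \ref{axioms}, that $(M_1,H(M_1))$ is genuinely an $H$-structure, so that the notion of $H$-independence applies; it plays no further role above.

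I expect the only delicate point to be the bookkeeping in the second paragraph: that forking of $\tp(\vec a/\vec h\vec c)$ over $\vec h$ is witnessed by a single $\cL$-formula realizing a drop of some $D(-,\varphi,k)$, and that this drop is in turn \emph{forced} by a single formula $\chi$ of $\tp(\vec c/\vec h)$, which is precisely what lets one transfer an existential $\cL_H$-statement down the elementary substructure. This is exactly the machinery already invoked in the proof of Lemma \ref{AlmostQE}, so no new idea is required; the remaining ingredients (finite character and monotonicity of $\ind$, supersimplicity to obtain a finite base $\vec h$, and elementarity to push the existential $\cL_H$-formula from $M_1$ to $M_0$) are routine.
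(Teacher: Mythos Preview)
Your proof is correct. The overall strategy---assume $\tp(\vec a/H(M_1))$ forks over a finite base $\vec h\subseteq H(M_0)$, express the forking by a first-order condition on the witnesses, and push that condition down to $M_0$ by elementarity---is the same as the paper's. The difference is purely in how the forking is encoded: the paper isolates a single new element $h\in H(M_1)$, observes that it satisfies a \emph{small} formula over $\vec a\vec h_0 H_1$, and uses the assumed elimination of $\exists^{large}$ to express smallness as a first-order $\cL$-formula that can be pulled back into $M_0$. You instead encode the forking of $\tp(\vec a/\vec h\vec c)$ over $\vec h$ via a drop of a local $D(-,\varphi,k)$-rank, exactly as in the proof of Lemma~\ref{AlmostQE}. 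Your route has the minor advantage that it does not invoke the $\exists^{large}$-elimination hypothesis at all; otherwise the two arguments are interchangeable.

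One small correction to your closing remark: the notion of $H$-independence (namely $A\ind_{H(A)} H(M_1)$) makes sense in any $\cL_H$-structure, not only in genuine $H$-structures, so the $|M_0|$-saturation of $(M_1,H(M_1))$ is not needed for the statement to be meaningful. In fact neither your argument nor the paper's uses saturation anywhere; the hypothesis appears to be included only because the lemma is applied in the proof of Proposition~\ref{induced}, where one passes to a highly saturated elementary extension.
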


\begin{proof}
Assume not. Then there is $\vec a\in M_0$
such that $\vec a \nind_{H(M_0)} H(M_1)$. Let $\vec h_0\in H(M_0)$ be such that
$\vec a\ind_{\vec h_0} H(M_0)$, then $\vec a \nind_{\vec h_0} H(M_1)$ and let $H_1\subset H(M_1)\setminus H(M_0)$ be finite and $h \in H(M_1)\setminus (H(M_0)\cup H_1)$ such that $\vec a \nind_{\vec h_0} H_1h$, but  $\vec a \ind_{\vec h_0} H_1$. So $h$ forks with $\vec a \vec h_0 H_1$. 
Let $\varphi(x,\vec a;\vec h_0, H_1)$ be a formula which holds for $h$ such
that $\varphi(x,\vec a;\vec h_0, H_1) \wedge \mathcal G(x)$ forks over $\emptyset$. Since
$(M_0,H(M_0))\preceq (M_1,H(M_1))$ there is $h_2\in H(M_0)$ and $H_2\subset
H(M_0)$ both disjoint from each other and disjoint from $\vec h_0$ such that 
$\varphi(h_2,\vec a,\vec h_0,H_2) \wedge \neg \exists^{large}x 
\varphi(x,\vec a,\vec h_0,H_2)$ holds, so $\vec a\nind_{\vec h_0}h_2H_2$, a
contradiction.
\end{proof}

\begin{prop}\label{induced}
Let $(M,H(M))$ be an $H$-structure and let $Y\subset H(M)^n$ be
$\cL_H$-definable. Then there is $X\subset M^n$ $\cL$-definable
such that $Y=X\cap H(M)^n$.
\end{prop}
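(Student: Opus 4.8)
The plan is to show that for tuples from $H(M)^n$ the $\cL_H$-type over a suitably enlarged base is already determined by the $\cL$-type over that base, and then to extract a defining $\cL$-formula by compactness. First I would reduce to the case where $(M,H(M))$ is sufficiently (say $|T|^+$-)saturated. Indeed, if $(M_0,H(M_0))\prec(M_1,H(M_1))$ with the latter saturated, and in $M_1$ we manage to write $Y_1:=\phi(M_1,\vec d)\cap H(M_1)^n=\theta(M_1,\vec e)\cap H(M_1)^n$ for a fixed $\cL$-formula $\theta(\vec y,\vec w)$ and some $\vec e\in M_1$, then $(M_1,H(M_1))\models\exists\vec w\,\forall\vec y\big(\bigwedge_i H(y_i)\to(\phi(\vec y,\vec d)\leftrightarrow\theta(\vec y,\vec w))\big)$, a sentence with parameters $\vec d\in M_0$; pulling it back to $(M_0,H(M_0))$ produces $\vec e_0\in M_0$ with $Y_0=\theta(M_0,\vec e_0)\cap H(M_0)^n$. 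So from now on assume $(M,H(M))$ is saturated and $Y=\phi(M,\vec d)\cap H(M)^n$ with $\vec d$ finite.

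The core is the following claim. By supersimplicity choose a finite $\vec h_{\vec d}\subseteq H(M)$ with $\vec d\ind_{\vec h_{\vec d}}H(M)$, and (harmlessly) arrange $H(\vec d)\subseteq\vec h_{\vec d}$. Then for all $\vec a,\vec a'\in H(M)^n$, $\tp(\vec a/\vec d\vec h_{\vec d})=\tp(\vec a'/\vec d\vec h_{\vec d})$ implies $\tp_H(\vec a/\vec d\vec h_{\vec d})=\tp_H(\vec a'/\vec d\vec h_{\vec d})$. To see this, set $\vec e=\vec h_{\vec d}\vec a$ and $\vec e'=\vec h_{\vec d}\vec a'$. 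Since $\vec h_{\vec d}\subseteq\vec e\subseteq H(M)$, base monotonicity turns $\vec d\ind_{\vec h_{\vec d}}H(M)$ into $\vec d\ind_{\vec e}H(M)$, hence $\vec d\vec e\ind_{\vec e}H(M)$; as $H(\vec d\vec e)=\vec e$ by the choice of $\vec h_{\vec d}$, the tuple $\vec d\vec e$ is $H$-independent, and symmetrically so is $\vec d\vec e'$. Moreover $\tp(\vec d\vec e)=\tp(\vec d\vec e')$ and in either tuple the $H$-part occupies the coordinates of $\vec e$ (resp. $\vec e'$), so $\tp(\vec d\vec e,H(\vec d\vec e))=\tp(\vec d\vec e',H(\vec d\vec e'))$. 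Proposition \ref{H-indtypes} then gives $\tp_H(\vec d\vec e)=\tp_H(\vec d\vec e')$, i.e. $\tp_H(\vec a/\vec d\vec h_{\vec d})=\tp_H(\vec a'/\vec d\vec h_{\vec d})$. In particular, for $\vec a\in H(M)^n$, membership in $Y$ depends only on $\tp(\vec a/\vec d\vec h_{\vec d})$. (One may also package this through Lemma \ref{AlmostQE}, first reducing $\phi$ to a Boolean combination of formulas $\exists\vec z\in H\,\psi(\vec z,\vec y,\vec d)$; the argument above is the essential point either way.)

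Now let $W=Y$ and $W'=H(M)^n\setminus Y$: disjoint $\cL_H$-definable subsets of $M^n$ over $\vec d\vec h_{\vec d}$, no complete $\cL$-type over $\vec d\vec h_{\vec d}$ being realized in both. A routine compactness argument in the saturated $(M,H(M))$ then separates them by an $\cL$-formula: for each $\vec b\in W'$ there is $\chi_{\vec b}\in\tp(\vec b/\vec d\vec h_{\vec d})$ with $W\cap\chi_{\vec b}(M,\vec d\vec h_{\vec d})=\emptyset$ (otherwise a realization in $W$ would have the $\cL$-type of $\vec b$ over $\vec d\vec h_{\vec d}$); compactness applied to the $\cL_H$-definition of $W'$ yields finitely many $\vec b_1,\dots,\vec b_l$ with $W'\subseteq\bigcup_j\chi_{\vec b_j}(M,\vec d\vec h_{\vec d})$; take $\theta=\bigwedge_j\neg\chi_{\vec b_j}$ and $X=\theta(M,\vec d\vec h_{\vec d})$, so that $W\subseteq X$ and $X\cap W'=\emptyset$, whence $X\cap H(M)^n=W=Y$. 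Descending to the original model via the $\exists\vec w$-trick above finishes the proof.

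The step that needs genuine care is the core claim: one must arrange $\vec d\vec h_{\vec d}\vec a$ to be $H$-independent so that Proposition \ref{H-indtypes} applies, which is exactly why the absorbing tuple $\vec h_{\vec d}$ (with $H(\vec d)\subseteq\vec h_{\vec d}$) is introduced, since for a general $\vec d$ the tuple $\vec d\vec a$ need not be $H$-independent. The remaining ingredients — commutation of traces with Boolean combinations, the compactness separation, and the descent from the saturated model — are routine.
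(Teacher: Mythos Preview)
Your argument is correct, but it differs from the paper's in how you arrange the $H$-independence needed to invoke Proposition~\ref{H-indtypes}. The paper passes to a $\kappa$-saturated elementary extension $(M_1,H(M_1))$ with $\kappa>|M|+|\cL|$ and uses Lemma~\ref{Hsubsets} to conclude that the entire small model $M$ sits $H$-independently inside $M_1$; then for any $\vec a,\vec b\in H(M_1)^n$ with $\tp(\vec a/M)=\tp(\vec b/M)$, the sets $M\vec a$ and $M\vec b$ are automatically $H$-independent, and Proposition~\ref{H-indtypes} gives $\tp_H(\vec a/M)=\tp_H(\vec b/M)$, whence compactness finishes. You instead stay over a finite base: after reducing to a saturated $H$-structure, you pick a finite $\vec h_{\vec d}\supseteq H(\vec d)$ with $\vec d\ind_{\vec h_{\vec d}}H(M)$ and show directly via base monotonicity that $\vec d\vec h_{\vec d}\vec a$ is $H$-independent for any $\vec a\in H(M)^n$, bypassing Lemma~\ref{Hsubsets} entirely. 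The paper's route is shorter once Lemma~\ref{Hsubsets} is in hand, while yours is more self-contained and makes explicit that only a finite $H$-basis of the parameters is needed; the descent via the $\exists\vec w$ sentence is a minor extra step your route incurs for working over a finite set rather than the whole model.
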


\begin{proof}
Let $(M_1,H(M_1))\succeq (M,H(M))$ be $\kappa$-saturated where
$\kappa>|M|+|L|$ and let $\vec a,\vec b\in H(M_1)^n$ be such that
$\tp(\vec a/M)=\tp(\vec b/M)$. We will prove that $\tp_H(\vec
a/M)=\tp_H(\vec b/M)$ and the result will follow by compactness.
Since $\vec a, \vec b \in H(M_1)^n$, we get by Lemma
\ref{Hsubsets} that $M\vec a$, $M\vec b$ are $H$-independent sets
and thus by Lemma \ref{H-indtypes} we get $\tp_H(\vec a/M)=\tp_H(\vec
b/M)$.
\end{proof}

%The collection $\mathcal{G}$ is a family of 
%regular types such that their forking extensions are orthogonal not only 
%to the original type but to the full family of types in $\mathcal{G}$. For example, 
%when $T$ has SU-rank one, we can take $\mathcal{G}$ to  be the collection of 
%types of SU-rank one which is type-definable. In this setting, if an element forks, it becomes algebraic and thus
%ortogonal to $\mathcal{G}$. Similarly, consider $T$ a theory of $SU$-rank $omega$ and choose $\mathcal{G}$ as the family of all %types of $SU$-rank $\omega$ and assume that $\mathcal{G}$ is type-definable. In this setting, if an element $a$ forks
%over a set $B$, $SU(a/B)<\omega$ and by Lascar's inequality it is ortogonal to $\mathcal{G}$.

%\begin{defn}
%Let $(M,H)\models T^{ind}$ be $\kappa$-saturated and let $A\subset M$ be smaller than
%$\kappa$. Let $\vec b\in M$ be a tuple. 
%\end{defn}

\begin{prop}\label{H-basis}
Let $(M,H(M))$ be an $H$-structure. Let $\vec a \in
M$. Then there is a unique smallest subset $H_0\subset H(M)$ such
that $\vec a\ind_{H_0} H$.
\end{prop}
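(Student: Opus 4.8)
The plan is to establish existence and uniqueness of a smallest $H_0 \subseteq H(M)$ with $\vec a \ind_{H_0} H$ by exploiting supersimplicity together with the fact that $H(M)$ is an independent set. First I would note that, by supersimplicity, $\tp(\vec a/H(M))$ does not fork over some finite subset $H_0 \subseteq H(M)$; fix such an $H_0$ of minimal cardinality (equivalently, minimal under inclusion after shrinking). This gives existence of \emph{a} finite $H_0$ with $\vec a \ind_{H_0} H$. The substantive content is uniqueness: if $H_0$ and $H_0'$ are two such sets, I want $H_0 \cap H_0'$ to work as well, which then forces a unique minimal one.

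The key step is the following claim: if $\vec a \ind_{H_0} H$ and $\vec a \ind_{H_0'} H$ with $H_0, H_0' \subseteq H(M)$ finite, then $\vec a \ind_{H_0 \cap H_0'} H$. To prove this I would use that $H(M)$ is a forking-independent set, so for any finite $H_1 \subseteq H(M)$ we have $H_1 \ind_{H_0 \cap H_1} H_0$ — more precisely, $H(M) \setminus H_0$ is independent from $H_0$ over $\emptyset$, and more generally distinct elements of $H(M)$ are mutually independent. Using this and the hypotheses $\vec a \ind_{H_0} H(M)$, $\vec a \ind_{H_0'} H(M)$, a standard transitivity/symmetry computation in the simple-theory independence calculus should yield $\vec a \ind_{H_0 \cap H_0'} H(M)$: intuitively, $H_0$ and $H_0'$ are each "freely placed" among the independent set $H(M)$, so the part of $H_0$ not in $H_0'$ is independent (over $H_0 \cap H_0'$) from everything $\vec a$ could depend on, and symmetrically. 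Concretely, one shows $\vec a \ind_{H_0 \cap H_0'} H_0 H_0'$ first (by the independence of $H(M)$), then $\vec a H_0 H_0' \ind_{H_0 \cap H_0'} H(M)$, and combines. Once the claim holds, uniqueness and existence of the smallest set follow: intersect all finite $H_0$ with $\vec a \ind_{H_0} H$; any descending chain of such sets stabilizes (they are finite), and the claim shows the intersection is again such a set, so there is a least one.

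I expect the main obstacle to be the independence-calculus manipulation in the claim: one must be careful that "$H(M)$ is an independent set" is used correctly over the right base, since $\vec a$ is also in play and we are working over varying finite subsets of $H(M)$ rather than over $\emptyset$. The cleanest route is probably to prove $H_0 \setminus H_0' \ind_{(H_0 \cap H_0')} (H_0' \cup H(M))$ using that $H(M)$ is independent, then apply $\vec a \ind_{H_0} H(M)$ in the form $\vec a \ind_{H_0} H(M)\,(H_0 \cup H_0')$ and transitivity to move the base down to $H_0 \cap H_0'$, then symmetrize with the role of $H_0'$. Finiteness of all the $H_0$'s (guaranteed by supersimplicity) is what makes "smallest" meaningful and lets us avoid any chain-length issues.
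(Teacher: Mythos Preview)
Your overall strategy matches the paper's proof exactly: use supersimplicity to obtain a finite $H_1\subseteq H(M)$ of minimal cardinality with $\vec a \ind_{H_1} H(M)$, then show that for any two finite sets $H_1,H_2$ with $\vec a\ind_{H_i} H(M)$ the intersection $H_0=H_1\cap H_2$ again satisfies $\vec a\ind_{H_0} H(M)$, which by minimality forces $H_1=H_2$.

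However, both concrete routes you sketch for the intersection claim contain a genuine slip. The assertions $(H_0\setminus H_0')\ind_{H_0\cap H_0'} (H_0'\cup H(M))$ and $\vec a\,H_0 H_0' \ind_{H_0\cap H_0'} H(M)$ are \emph{false} in general: since $H_0\setminus H_0'$ (respectively $H_0\cup H_0'$) is contained in the right-hand set $H(M)$, either statement would force those elements into $\acl(H_0\cap H_0')$, which the independence of $H(M)$ rules out unless $H_0\subseteq H_0'$. The correct transitivity chain (and this is precisely what the paper does) avoids ever placing elements of $H(M)$ on both sides over a base not containing them:
\begin{itemize}
\item From the independence of $H(M)$: $H_1 \ind_{H_1\cap H_2} H_2$.
\item From $\vec a \ind_{H_1} H(M)$ one gets $\vec a \ind_{H_1} H_2$; combined with the previous line via transitivity (on the $H_2$ side, using $H_1\cap H_2\subseteq H_1$) this yields $\vec a \ind_{H_1\cap H_2} H_2$.
\item Now combine with $\vec a \ind_{H_2} H(M)$ by transitivity to conclude $\vec a \ind_{H_1\cap H_2} H(M)$.
\end{itemize}
So your plan is right and essentially identical to the paper's; only the specific order of independence moves needs this correction.
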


\begin{proof}
Since $T$ is supersimple, there is a finite subset $H_1\subset H(M)$ such that $\vec a\ind_{H_1} H(M)$. Choose such
 subset so that $|H_1|$ (the size of the subset) is minimal. We will now show such a set $H_1$ is unique.

If $\vec a\ind H(M)$, then $H_1=\emptyset$ and the result
 follows. So we may assume that $\vec a\nind  H(M)$.

 Let $H_2\subset H(M)$ be another minimal finite subset such that $\vec a\ind_{H_2} H(M)$ and let $H_0=H_1\cap H_2$.

 \textbf{Claim} $\vec a\ind_{H_0}H(M)$.

Since the elements in $H(M)$ are independent over $\emptyset$, we have $H_1\ind_{H_0}H_2$. Since $\vec a\ind_{H_1} H(M)$
we also have $\vec a\ind_{H_1} H_2$ and by transitivity $\vec a\ind_{H_0} H_2$. But $\vec a\ind_{H_2} H(M)$ , so using again
transitivity we get $\vec a\ind_{H_0} H(M)$. 

Finally by minimality of $|H_1|$ we get that $H_0=H_1=H_2$ as we wanted. 
 \end{proof}

 \begin{rema}\label{H-basisrel}
Let $(M,H(M))$ be an $H$-structure. Let $\vec a\in M$ and
let $C\subset M$ be $H$-independent. Then whenever
$H_1,H_2$ are finite subsets of $H$ and $H_0=H_1\cap H_2$, 
we have $H_1 \ind_{H_0C} H_2$. Then the argument from the previous proposition
relativized to $C$ shows that there is a unique smallest subset
$H_0\subset H(M)$ such that $\vec a\ind_{H_0C} H$.
\end{rema}

 \begin{notation}
 Let $(M,H(M))$ be an $H$-structure. Let $\vec a\in M$.
Let $H_0\subset  H(M)$ be the smallest subset such that $\vec a\ind_{H_0} H$. We call $H_0$ the 
\emph{$H$-basis} of $\vec a$ and we denote it as $HB(\vec a)$. Given $C\subset M$ such that
$C$ is $H$-independent, let $H_1\subset H(M)$ be the
smallest subset such that $\vec a\ind_{C H_1} H$. We call $H_1$ the \emph{$H$-basis of $\vec a$ over $C$} and we denote it as
$HB(\vec a/C)$. Note that $H$-basis is a finite set, if we give it an order to view it as a
tuple, we will explicitly say so. Finally note that we can also define the $H$-basis for $\vec a\in M^{eq}$.
 \end{notation}

\begin{prop}
Let $(M,H(M))$ be an $H$-structure. Let $a_1,\dots,a_n,a_{n+1}\in M$ and
let $C\subset M$ be such that $C$ is $H$-independent. Then
$HB(a_1,\dots,a_n,a_{n+1}/C)=HB(a_1,\dots,a_n/C)\cup HB(a_{n+1}/Ca_1,\dots,a_nHB(a_1,\dots,a_n/C))$.
\end{prop}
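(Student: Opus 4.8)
The plan is to prove the two inclusions between $HB(a_1,\dots,a_n,a_{n+1}/C)$ and the set $S:=HB(a_1,\dots,a_n/C)\cup HB(a_{n+1}/Ca_1,\dots,a_nHB(a_1,\dots,a_n/C))$, using that each $H$-basis is characterized by two properties: it is a subset $H_0$ of $H(M)$ over which the relevant tuple is independent from $H(M)$ relative to $C$, and it is smallest such. Throughout I will write $\vec a=(a_1,\dots,a_n)$, $B_0=HB(\vec a/C)$ and $B_1=HB(a_{n+1}/C\vec aB_0)$, so $S=B_0\cup B_1$; note $C\vec aB_0$ is $H$-independent (since $B_0=HB(\vec a/C)$ witnesses $\vec a\ind_{CB_0}H$, which is exactly $H$-independence of $C\vec aB_0$ once we know $C$ is $H$-independent), so $B_1$ is well-defined by Remark \ref{H-basisrel}.

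First I would show $HB(\vec a a_{n+1}/C)\subseteq S$, equivalently that $\vec a a_{n+1}\ind_{CS}H(M)$; minimality of the $H$-basis then forces the containment. We have $\vec a\ind_{CB_0}H(M)$, hence $\vec a\ind_{CS}H(M)$ by monotonicity. Also $a_{n+1}\ind_{C\vec aB_0B_1}H(M)$, i.e. $a_{n+1}\ind_{C\vec aS}H(M)$. Combining these two via transitivity over $CS$ gives $\vec a a_{n+1}\ind_{CS}H(M)$, as desired. Here I am using the standard forking calculus in the simple theory $T$ (symmetry, transitivity, monotonicity), all of which are available since independence means forking-independence in $T$.

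For the reverse inclusion $S\subseteq HB(\vec a a_{n+1}/C)$, write $E=HB(\vec a a_{n+1}/C)$, so $\vec a a_{n+1}\ind_{CE}H(M)$. From this, monotonicity gives $\vec a\ind_{CE}H(M)$, so by minimality of $B_0$ we get $B_0\subseteq E$. It remains to see $B_1\subseteq E$. From $\vec a a_{n+1}\ind_{CE}H(M)$ we get $a_{n+1}\ind_{CE\vec a}H(M)$, and since $B_0\subseteq E$ this reads $a_{n+1}\ind_{CE\vec aB_0}H(M)$. Now $B_1=HB(a_{n+1}/C\vec aB_0)$ is the smallest subset of $H(M)$ over which $a_{n+1}\ind_{C\vec aB_0-}H(M)$; but the base set here is $CE\vec aB_0$ rather than $C\vec aB_0$, so I cannot immediately conclude. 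The fix is to replace $E$ by $E\cap HB$-type data: more precisely, observe $E\subseteq H(M)$, and since $E\subseteq H(M)$ and $C\vec aB_0$ is $H$-independent, the relativized uniqueness (Remark \ref{H-basisrel}, applied with base $C\vec aB_0$) tells us the smallest subset of $H(M)$ over which $a_{n+1}$ is independent from $H(M)$ relative to $C\vec aB_0$ equals $B_1$; and from $a_{n+1}\ind_{CE\vec aB_0}H(M)$ together with $E\subseteq H(M)$ one shows $a_{n+1}\ind_{C(E\cup B_0)\vec a}H(M)$ already witnesses independence over a set containing $B_1$ by the minimality argument inside $H(M)$ — indeed, intersecting $E\cup B_0$ with any witnessing set and using that elements of $H(M)$ are mutually independent over $C\vec aB_0$ (which holds because $C\vec aB_0$ is $H$-independent) lets one run the intersection argument of Proposition \ref{H-basis}/Remark \ref{H-basisrel} to conclude $B_1\subseteq E\cup B_0\subseteq E$.

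The main obstacle is precisely this last point: matching the base set $CE\vec aB_0$ appearing after decomposing the independence of $\vec aa_{n+1}$ against the base set $C\vec aB_0$ used to define $B_1$. The clean way to handle it is to prove a small auxiliary fact first, namely that if $D$ is $H$-independent, $\vec c\in M$, and $E'\subseteq H(M)$ satisfies $\vec c\ind_{DE'}H(M)$, then $HB(\vec c/D)\subseteq E'$ — this follows by taking the minimal witness $B=HB(\vec c/D)$, using $E'\cup B\supseteq$ both witnesses, and applying the intersection argument over $D$ (valid since $H(M)$ is independent over $D$ when $D$ is $H$-independent) to get $\vec c\ind_{D(E'\cap B)}H(M)$, whence $B\subseteq E'$ by minimality. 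Applying this auxiliary fact with $D=C\vec aB_0$, $\vec c=a_{n+1}$, $E'=E$ gives $B_1=HB(a_{n+1}/C\vec aB_0)\subseteq E$, completing the proof. Everything else is routine forking calculus.
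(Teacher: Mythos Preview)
Your proof is correct and follows exactly the paper's approach: prove both inclusions via transitivity/monotonicity of forking plus minimality of the $H$-basis. The detour through an auxiliary fact at the end is unnecessary: since $B_0\subset E$, the independence $a_{n+1}\ind_{CE\vec aB_0}H(M)$ is literally $a_{n+1}\ind_{(C\vec aB_0)E}H(M)$ with $E\subset H(M)$, so minimality of $B_1=HB(a_{n+1}/C\vec aB_0)$ (Remark~\ref{H-basisrel}) immediately gives $B_1\subset E$, which is how the paper concludes in one line.
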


\begin{proof}
Let $H_1=HB(a_1,\dots,a_n/C)$.
First note that since $a_1,\dots,a_n \ind_{C H_1}H$, then the set $a_1,\dots,a_n C H_1$ is
$H$-independent and we can define $H_2= HB(a_{n+1}/Ca_1,\dots,a_n H_1)$.
Finally, let $H_0=HB(a_1,\dots,a_n,a_{n+1}/C)$.

\textbf{Claim} $H_0\subset H_1H_2$.

We have $a_1,\dots,a_n \ind_{C H_1}H$ and $a_{n+1} \ind_{C H_1 H_2a_1,\dots,a_n}H$, so
by transitivity,

 $a_1,\dots,a_n a_{n+1}\ind_{C H_1 H_2}H$ and by the minimality of an $H$-basis,
we have $H_0\subset H_1 H_2$.

\textbf{Claim} $H_0 \supset H_1 H_2$.

By definition, $a_1,\dots,a_n a_{n+1}\ind_{C H_0}H$, so $a_1,\dots,a_n \ind_{C H_0}H$ and by minimality
we have $H_1\subset H_0$. We also get by transitivity that $a_{n+1}\ind_{Ca_1,\dots,a_n H_1 H_0}H$
and by the minimality of $H$-basis we get $H_2\subset H_0$ as desired.
\end{proof}

\begin{prop}\label{changinghb}
Let $(M,H(M))$ be an $H$-structure. Let $\vec a \in M$ and
let $C\subset D\subset M$ be such that $C$, $D$ are $H$-independent. Then $HB(\vec a/C)\subset HB(D)\cup HB(\vec a/D)$. 
\end{prop}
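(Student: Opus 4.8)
The plan is to reduce the asserted inclusion to a single forking‑independence statement and then invoke the minimality built into the definition of the $H$-basis, i.e.\ Remark \ref{H-basisrel}. Abbreviate $H_D := HB(D)$ and $H_a := HB(\vec a/D)$; the second of these is legitimate precisely because $D$ is $H$-independent. Unwinding the definitions we have the two independences
$$D \ind_{H_D} H \qquad\text{and}\qquad \vec a \ind_{D H_a} H .$$
Since $C$ is $H$-independent, Remark \ref{H-basisrel} tells us that to conclude $HB(\vec a/C)\subseteq H_D\cup H_a$ it suffices to verify $\vec a \ind_{C(H_D\cup H_a)} H$.

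First I would enlarge the bases of the two recorded independences. As $H_a\subseteq H$ lies on the right‑hand side of $D\ind_{H_D}H$, base monotonicity gives $D\ind_{H_D H_a}H$; symmetrically, $H_D\subseteq H$ lets us pass from $\vec a\ind_{DH_a}H$ to $\vec a\ind_{DH_a H_D}H$. These two statements now share the base $H_D H_a$, so transitivity of $\ind$ — after symmetry, from $H\ind_{H_D H_a}D$ and $H\ind_{H_D H_a D}\vec a$ one obtains $H\ind_{H_D H_a}D\vec a$ — yields $\vec a D \ind_{H_D H_a} H$. Finally, because $C\subseteq D$ the tuple $C$ is part of the left‑hand side $\vec a D$, so monotonicity lets us move $C$ into the base and discard the remainder of $D$, giving $\vec a\ind_{C H_D H_a}H$, which is exactly what was needed. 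Feeding this into Remark \ref{H-basisrel} gives $HB(\vec a/C)\subseteq H_D\cup H_a = HB(D)\cup HB(\vec a/D)$.

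The one step that requires care — and where a first attempt is likely to founder — is the transitivity move. The natural instinct is to factor $\vec a\ind_{C H_D H_a}H$ through $D$ and prove $\vec a\ind_{C H_D H_a}D$ separately, but there is no reason for $\vec a$ to be independent from $D$ over that base (indeed $\vec a$ may well be algebraic over $D$). The remedy is to keep $\vec a$ and $D$ together on the left, establish $\vec a D\ind_{H_D H_a}H$ first, and only afterwards peel $D$ off into the base. It is also worth noting that the argument never uses any relation between $H_D$ and $D$ (such as $HB(D)\subseteq D$), so no auxiliary lemma about the $H$-basis of an $H$-independent set is needed.
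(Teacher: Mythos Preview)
Your proof is correct and follows essentially the same route as the paper's: establish $\vec a D \ind_{(\text{base})} H$ by combining the independence of $D$ from $H$ with $\vec a\ind_{DH_a}H$ via transitivity, then shift $C\subseteq D$ into the base and invoke the minimality of the $H$-basis over $C$. The only cosmetic difference is that the paper works with $H(D)$ (using directly that $D$ is $H$-independent) where you use $H_D=HB(D)$; your choice matches the statement verbatim and is arguably tidier, but the argument is the same.
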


\begin{proof}
Write $H_{\vec a}=HB(\vec a/D)$. Then $\vec a D\ind_{H_{\vec a} H(D)}H$ and $\vec a\ind_{H_{\vec a} C H(D)}H$. By minimality of $HB(\vec a/C)$ we get that $HB(\vec a/C)\subset H_{\vec a} \cup H(D)$ and thus if $h\in HB(\vec a/C)\setminus H_{\vec a}$, we must have $h\in H(D)$.
\end{proof}

The following definition and proposition were very fruitfull to show the preservation of NTP2 to $T^{ind}$ 
when $T$ was geometric see \cite{BeKi}.

\begin{defn}
Let $(M,H)\models T^{ind}_{\mathcal{G}}$ be saturated. We say that an $\cL_H$-formula $\psi(x,\vec c)$ defines
a \emph{H-large} subset of $M$ if there is 
$b\models \psi(x,\vec c)$ such that $b\ind H(M)\vec c$, $b \models \mathcal{G}$. Otherwise we say that
$\psi(x,\vec c)$ defines a \emph{H-small} subset of $M$.
Note that the formula $\psi(x,\vec c)$ is $H$-large if there are infinitely many realizations of $\psi(x,\vec c)$ in $\mathcal{G}$ that
are independent from $H(M)\vec c$. For $A\subset M$ and $b\in M$, we say that $b$ is 
\emph{H-small} over $A$ if it satisfies an $H$-small formula $\psi(x,\vec a)$ with $\vec a\in A$, otherwise we say that
$b$ is \emph{H-large} over $A$.
%of $A$ if $\vec b\in \ccl(AH(M))$. Let $X\subset M^n$ be $A$-definable. We say that $X$ is
%\emph{H-small} if $X\subset \cl(A\cup H(M))$.
\end{defn}

 \begin{prop}\label{uptosmall}
Let $(M,H(M))$ be an $H$-structure and let $Y\subset M$ be
$\cL_H$-definable. Then there is $X\subset M$ $\cL$-definable
such that $Y\triangle X$ is $H$-small, where $\triangle$ stands for a boolean
connective for the symmetric difference.
\end{prop}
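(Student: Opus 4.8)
The plan is to deduce this from the near-model-completeness already established in Lemma \ref{AlmostQE}, plus the density and extension properties.  By that lemma, every $\cL_H$-definable $Y\subset M$ is a finite boolean combination of sets of the form
$\{b : \exists \vec x\in H\,\varphi(\vec x,b,\vec c)\}$
with $\varphi$ an $\cL$-formula and $\vec c$ a parameter from $M$; since the class of sets whose symmetric difference with an $\cL$-definable set is $H$-small is closed under finite boolean combinations (the $H$-small sets form an ideal, being closed under subsets and, using compactness/supersimplicity together with the fact that a finite union of $H$-small formulas is $H$-small, under finite unions), it suffices to treat a single such $H$-bounded formula $\psi(y,\vec c):=\exists \vec x\in H\,\varphi(\vec x,y,\vec c)$.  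So first I would reduce to this case, and fix $\vec c$, which we may enlarge so that the set $\vec c$ is $H$-independent (replacing $\vec c$ by $\vec c\,HB(\vec c)$).

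Next I would pick the candidate $\cL$-definable set $X$.  The natural guess is $X:=\{y : \varphi(\vec x,y,\vec c)$ has a solution $\vec x$ which is large over $\vec c$ and independent from $H(M)$ over $\vec c\}$, but since $\vec c$ is a fixed parameter I would instead define $X$ purely in $\cL$ by: $y\in X$ iff there is $\vec x\models\mathcal{G}$ with $\varphi(\vec x,y,\vec c)$ and $\vec x\ind_{\vec c} y$ — more precisely, use that being large is definable (elimination of $\exists^{large}$) to write, for each way the coordinates of $\vec x$ can be small over $\vec c y$ or over the preceding ones, the corresponding $\cL$-condition; the point is that "$\varphi(\vec x,y,\vec c)$ has a realization $\vec x$ that is an independent tuple of large types over $\vec c y$" is $\cL$-definable in $y$ over $\vec c$.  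Then I would argue the two containments up to an $H$-small set:

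\textbf{$Y\setminus X$ is $H$-small.}  Suppose $b\in Y\setminus X$, so there is $\vec h\in H(M)$ with $\varphi(\vec h,b,\vec c)$, but $\vec h$ is \emph{not} an independent tuple of large types over $\vec c b$.  Decompose $\vec h=\vec h_0\vec h_1$ where (after reordering) $\vec h_1$ lists those coordinates that are small or dependent over $\vec c b$ together with the previous coordinates; since $\vec c\,HB(\vec c)$ is $H$-independent and $\vec h\subset H(M)$, the coordinates of $\vec h$ are mutually independent over $\vec c$, so the failure is witnessed over $b$: there is $h\in\vec h$ and a subtuple $\vec h'$ of the rest with $h\nind_{\vec c\vec h'}b$, hence $b$ satisfies the $H$-small $\cL_H$-formula $\exists \vec h'\in H\,\exists h\in H\,(\varphi'(\vec h',h,y,\vec c)\wedge \neg\exists^{large}x\,(x\models\mathcal{G}\wedge\varphi'(\vec h',x,y,\vec c)))$ for the appropriate reorganized $\varphi'$; this formula defines an $H$-small set because any realization $b'$ forces $\tp(b'/\vec c\vec h' h)$ to fork, so $b'\nind \vec c\vec h'h\supset H$-part, contradicting $b'\ind H(M)\vec c'$.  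A finite disjunction over the finitely many shapes of the decomposition is still $H$-small.

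\textbf{$X\setminus Y$ is $H$-small.}  Suppose $b\in X$, so in $M$ there is $\vec x\models\mathcal{G}$, an independent tuple of large types over $\vec c b$, with $\varphi(\vec x,b,\vec c)$.  If moreover $b$ is $H$-large over $\vec c$, i.e. $b\ind_{\vec c}H(M)$ and $b\models\mathcal{G}$... (actually we only need $b\ind_{\vec c}H(M)$): by the generalized density property applied over $\vec c\,b\,HB(b/\vec c)$ (which is $H$-independent, since $b\ind_{\vec c}H$ means $HB(b/\vec c)=\emptyset$), we may find such an $\vec x$ inside $H(M)$, witnessing $b\in Y$.  So $X\setminus Y\subseteq\{b : b$ is $H$-small over $\vec c\}$, which is $H$-small by definition.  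This proves $Y\triangle X$ is $H$-small, and the proposition follows.

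\textbf{Main obstacle.}  The delicate point is the bookkeeping in the $Y\setminus X$ step: one must correctly identify, among the finitely many reorderings and "small/dependent versus large/independent" patterns of the coordinates of $\vec h$, those that force the witnessing parameter $b$ into an $H$-small set, and use elimination of $\exists^{large}$ to render each pattern by an $\cL$-condition so that the resulting $\cL_H$-formula is genuinely $H$-small; the key leverage is that $\vec c$ was chosen $H$-independent so that all dependence of $\vec h$ with the $H$-side must pass through $b$.
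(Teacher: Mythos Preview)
Your approach differs from the paper's, and while the overall strategy can be made to work, the execution has a genuine gap in the $Y\setminus X$ step.

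The paper does not reduce to a single $H$-bounded formula. Instead it argues directly: enlarge the parameter tuple $\vec a$ of $Y$ so that $\vec a=\vec a\,HB(\vec a)$ is $H$-independent, and observe via Proposition~\ref{H-indtypes} that for any $b,c$ which are $H$-large over $\vec a$ (i.e.\ realize $\mathcal G$ and satisfy $b,c\ind H(M)\vec a$), the tuples $b\vec a$ and $c\vec a$ are $H$-independent with the same $H$-part, so $\tp_H(b/\vec a)=\tp_H(c/\vec a)$ iff $\tp(b/\vec a)=\tp(c/\vec a)$. Hence the $H$-large part of $Y$ and the $H$-large part of $M\setminus Y$ are separated by $\cL$-types over $\vec a$, and by compactness by a single $\cL$-formula $X$. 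Then $Y\triangle X$ contains no $H$-large element, so it is $H$-small. No manipulation of the witnesses $\vec h$ is needed.

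In your route two problems arise. First, your definition of $X$ is ambiguous: ``$\vec x\ind_{\vec c}y$'' and ``$\vec x$ is an independent tuple of large types over $\vec c y$'' are different conditions, and only the second is expressible by iterated $\exists^{large}$. Second, with $X$ taken as the second, $Y\setminus X$ need not be $H$-small: if $\varphi(\vec x,b,\vec c)$ forces two coordinates of $\vec x$ to be equal, or forces some $x_i$ to lie in $H(\vec c)$, then no \emph{independent} tuple of large types over $\vec c b$ can realize it, so $b\notin X$ even when $b\ind_{\vec c}H(M)$ and some $\vec h\in H$ witnesses $b\in Y$. Your proposed witness formula $\exists\vec h'\in H\,\exists h\in H\,(\varphi'\wedge\neg\exists^{large}x\,\varphi')$ does not capture this case: the obstruction is not that $\varphi'(\vec h',x,b,\vec c)$ has no large solution, but that the particular $h$ is not large while the formula may still be large. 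The repair is to let $X$ range over all equality patterns on the coordinates of $\vec x$ and all substitutions of some coordinates by elements of $H(\vec c)$, applying iterated $\exists^{large}$ only to the remaining free coordinates; with that correction your argument goes through. Incidentally, for the corrected $X$ one actually has $X\subseteq Y$ outright, since the generalized density property imposes no $H$-independence condition on the parameter set $\vec c b$; your extra hypothesis in the $X\setminus Y$ step is unnecessary.
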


\begin{proof}
If $Y$ is $H$-small or its complement is $H$-small, the result is clear, so we may assume that both $Y$
and $M\setminus Y$ are $H$-large. Assume that $Y$ is definable over $\vec a$
and that $\vec a=\vec a HB(\vec a)$.
Let $b\in Y$ be such that $b\models \mathcal{G}$, $b\ind H(M)\vec a$ and let $c\in M\setminus Y$
be such that $c\models \mathcal{G}$, $c\ind H(M)\vec a$. Then $b\vec a$, $c\vec a$ are
$H$-independent and thus there is $X_{bc}$ an $\cL$-definable set such that
$b\in X_{bc}$ and $c\not \in X_{bc}$. By compactness, we may get a single 
$\cL$-definable set $X$ such that for all $b'\in Y$ and $c'\in M\setminus Y$ $H$-large over $\vec a$, we have $b'\in X$ and $c'\in M\setminus X$. This shows that
$Y\triangle X$ is $H$-small.
\end{proof}

 Our next goal is to characterize the algebraic closure in $H$-structures. The key
 tool is the following result:

\begin{lem}\label{aclp}
Let $(M,H(M))$ be an $H$-structure, and
let $A\subset M$ be $\acl$-closed and $H$-independent.
Then $A$ is $\acl_H$-closed.
\end{lem}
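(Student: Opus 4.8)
The plan is to show that no element of $M \setminus A$ is $\acl_H$-algebraic over $A$, by producing, for any such element $b$, infinitely many distinct conjugates of $b$ over $A$ in the sense of $\cL_H$. The natural strategy is to build an infinite sequence $(b_i)_{i<\omega}$ of realizations of $\tp_H(b/A)$, and the tool that lets us verify that each $b_i$ has the right $\cL_H$-type is Proposition \ref{H-indtypes}: it suffices to arrange that each $A b_i$ is $H$-independent and that $\tp(A b_i, H(A b_i)) = \tp(A b, H(A b))$ (as $\cL$-types with the distinguished finite tuple of $H$-points enumerated).

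First I would split into cases according to the $\cL$-type of $b$ over $A$. If $b$ is already $\acl$-algebraic over $A$, then $b \in \acl(A) = A$ since $A$ is $\acl$-closed, contradicting $b \notin A$; so we may assume $b \notin \acl(A)$, i.e. $\tp(b/A)$ is non-algebraic and hence has infinitely many realizations in $M$. Next, use supersimplicity to fix a finite tuple $\vec h = HB(b/A) \subset H(M)$ with $b \ind_{A\vec h} H(M)$; since $A$ is $H$-independent, $A\vec h$ is $H$-independent. Now I would further distinguish whether $b$ is large or small over $A\vec h$. In the small case, $\tp(b/A\vec h)$ is a non-large (forking or non-$\mathcal{G}$-extending) type, and I want realizations $b_i$ of $\tp(b/A\vec h)$ that stay outside $H(M)$ and with $b_i \ind_{A\vec h} H(M)$; this is exactly what the extension property (3) gives, applied repeatedly, and since $\tp(b/A)$ is non-algebraic there are infinitely many such, all with $H(A b_i) = H(A) \cup \vec h$ and $A b_i$ $H$-independent, so all realizing $\tp_H(b/A)$.

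The remaining case, $b$ large over $A\vec h$, is where I expect the main subtlety. Here $\tp(b/A\vec h)$ is a non-forking extension of $\mathcal{G}$, so by the density property (3) of the $H$-structure we can find realizations inside $H(M)$ — but that changes the $H$-basis, so instead I would use density together with the independence of $H(M)$ to produce infinitely many distinct $b_i \in H(M)$ realizing $\tp(b/A\vec h)$ with the $b_i$ mutually independent and independent from $\vec h$ over nothing; then $H(A b_i) = H(A) \cup \vec h \cup \{b_i\}$ and again $A b_i$ is $H$-independent and $\tp(A b_i, H(A b_i)) = \tp(A b, H(A b))$, so $b_i \models \tp_H(b/A)$. (One must check the enumerations of the $H$-parts match up; this is where care is needed, since in this case $b$ itself is one of the distinguished $H$-points.) In all cases we obtain infinitely many distinct realizations of $\tp_H(b/A)$, so $b \notin \acl_H(A)$; since $b \notin A$ was arbitrary, $\acl_H(A) \subseteq A$, and the reverse inclusion is trivial. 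The main obstacle is bookkeeping the finite distinguished $H$-tuples correctly across the cases so that Proposition \ref{H-indtypes} applies cleanly, particularly reconciling the large case (where $b$ joins $H$) with the definition of $H$-independence.
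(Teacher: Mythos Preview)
Your case split is the wrong one, and both branches have genuine problems.

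First, your ``large'' case. If $b$ is large over $A\vec h$ then $b\ind A\vec h$; combined with $b\ind_{A\vec h}H(M)$ this gives $b\ind_A H(M)$, so in fact $\vec h=HB(b/A)=\emptyset$ and $b\notin H(M)$ (else $b\ind_A b$ would force $b\in\acl(A)$). Now your proposal is to realize $\tp(b/A)$ by elements $b_i\in H(M)$. But then $H(Ab)=H(A)$ while $H(Ab_i)=H(A)\cup\{b_i\}$, so the marked tuples do not match and Proposition~\ref{H-indtypes} does not apply; indeed $\tp_H(b_i/A)\neq\tp_H(b/A)$ since $H(b_i)$ holds and $H(b)$ fails. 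The right move here is simply the extension property (exactly as in your small case with $\vec h=\emptyset$), producing $b_i\notin H(M)$ with $b_i\ind_A H(M)$.

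Second, and more seriously, your ``small'' case misses the situation $b\in\acl(A\vec h)\setminus A$. You justify infinitely many realizations by saying $\tp(b/A)$ is non-algebraic, but what you actually need is that $\tp(b/A\vec h)$ is non-algebraic, and this can fail. The cleanest instance is $b\in H(M)\setminus A$: one checks $HB(b/A)=\{b\}$, so $\vec h=\{b\}$ and $\tp(b/A\vec h)$ is algebraic; your procedure then yields only $b$ itself. The same obstruction arises whenever $b$ is $\acl$-algebraic over $A\vec h$.

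The paper's proof avoids this by splitting instead on whether $\vec h=\emptyset$ or not. When $\vec h\neq\emptyset$, rather than moving $b$ over the fixed base $A\vec h$, it moves $\vec h$: since $A$ is $H$-independent, $\vec h$ is an independent tuple of realizations of $\mathcal G$ over $A$, so by density one finds pairwise disjoint $\vec h_i\subset H(M)$ with $\tp(\vec h_i/A)=\tp(\vec h/A)$, and then takes $a_i$ with $\tp_H(a_i\vec h_i/A)=\tp_H(b\vec h/A)$. Uniqueness of the $H$-basis gives $HB(a_i/A)=\vec h_i$, so the $a_i$ are automatically distinct. This handles $b\in\acl(A\vec h)$ (including $b\in H(M)$) for free.
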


\begin{proof}
Suppose $a\in M$, $a\not\in A$. We do the argument by cases:

Case 1. Assume $a\ind_AH(M)$. Using repeatedly the extension
property, we can find $\{a_i :i\in\omega\}$ all of them realizing $\tp(a/A)$ and 
independent over $A\cup H(M)$. By Proposition \ref{H-indtypes},
each $a_i$ realizes $\tp_H(a/A)$, and thus $a\not\in\acl_H(A)$.

Case 2. Assume $a\nind_AH(M)$. Let $\vec h\in H(M)$ be an enumeration of $HB(a/A)$ (so we see it as a tuple), then
$a\ind_{A\vec h}H(M)$. Using the density property we can find $\{\vec h_i :i\in\omega\}$ in $H$ all of them realizing $\tp(\vec h/A)$ and disjoint one from the other. Let $a_i\in M$ be such that $\tp_H(a_i,\vec h_i/A)=\tp_H(a,\vec h/A)$. Then for each $i$, $HB(a_i/A)=\vec h_i$ (again with an order) and so the family $\{a_i:i\in \omega\}$ is infinite and they all satisfy $tp_H(a/A)$.
\end{proof}

\begin{cor}\label{aclp2}
Let $(M,H(M))$ be an $H$-structure, and
let $A\subset M$. Then $\acl_H(A)=\acl(A,HB(A))$.
\end{cor}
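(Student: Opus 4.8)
The statement to prove is Corollary~\ref{aclp2}: for any $A \subseteq M$ in an $H$-structure $(M, H(M))$, we have $\acl_H(A) = \acl(A, HB(A))$.

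The plan is to derive this directly from Lemma~\ref{aclp} by passing to a suitable $\acl$-closed, $H$-independent set containing $A$. First I would set $A_0 = \acl(A, HB(A))$ and argue that $A_0$ is $H$-independent: by definition of the $H$-basis, $A \ind_{HB(A)} H(M)$, so $A \cup HB(A)$ is $H$-independent, i.e.\ $A\, HB(A) \ind_{HB(A)} H(M)$, and since $HB(A) \subseteq H(A_0)$ this gives $A\, HB(A) \ind_{H(A_0)} H(M)$; now $A_0 = \acl(A\, HB(A))$, and taking algebraic closure cannot destroy independence from $H(M)$ over $H(A_0)$ (as $\acl(A\,HB(A)) \ind_{A\,HB(A)} H(M)\,H(A_0)$, whence $A_0 \ind_{H(A_0)} H(M)$ by transitivity — one should check $H(A_0)$ is still exactly $HB(A)$ together with whatever new elements of $H(M)$ landed in the algebraic closure, but any such new $h \in H(M) \cap A_0$ is algebraic over $A\,HB(A)$, hence over $HB(A)$ since $A \ind_{HB(A)} H(M)$, hence $h \in HB(A)$ as $H(M)$ is an independent set; so $H(A_0) = HB(A)$). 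Thus $A_0$ is $\acl$-closed and $H$-independent, so Lemma~\ref{aclp} applies and gives $\acl_H(A_0) = A_0$.

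Next I would chain the algebraic closures. Since $A \subseteq A_0$, monotonicity of $\acl_H$ gives $\acl_H(A) \subseteq \acl_H(A_0) = A_0 = \acl(A, HB(A))$. This is the nontrivial inclusion. For the reverse inclusion $\acl(A, HB(A)) \subseteq \acl_H(A)$: clearly $A \subseteq \acl_H(A)$, and $HB(A) \subseteq H(M)$ consists of finitely many elements each of which I claim lies in $\acl_H(A)$. In fact each $h \in HB(A)$ is a member of the finite tuple $HB(\vec a)$ for the finite subtuple $\vec a$ of $A$ with $h \in HB(\vec a)$; and by the uniqueness and finiteness of the $H$-basis (Proposition~\ref{H-basis}), the set $HB(\vec a)$ is $\acl_H$-definable over $\vec a$ — it is the unique smallest finite subset of $H$ over which $\vec a$ is independent from $H$, a property expressible in $T^{ind}_{\mathcal{G}}$ — so $HB(\vec a) \subseteq \acl_H(\vec a) \subseteq \acl_H(A)$. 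Hence $A \cup HB(A) \subseteq \acl_H(A)$, and since $\acl(B) \subseteq \acl_H(B)$ always and $\acl_H$ is idempotent, $\acl(A, HB(A)) \subseteq \acl_H(\acl_H(A)) = \acl_H(A)$.

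The main obstacle is the first step: verifying carefully that $A_0 = \acl(A, HB(A))$ is genuinely $H$-independent, i.e.\ that closing off under $\acl$ does not introduce new elements of $H(M)$ beyond $HB(A)$ and does not break the independence $A_0 \ind_{H(A_0)} H(M)$. Both points follow from the fact that $H(M)$ is a forking-independent set together with transitivity of forking independence, but one must be slightly careful that $H(A_0) = H(M) \cap A_0$ equals exactly $HB(A)$ (the argument above: a new $h$ would be in $\acl(A\,HB(A))$, hence in $\acl(HB(A))$ by $A \ind_{HB(A)} H(M)$, hence equal to one of the finitely many elements of $HB(A)$ since distinct elements of $H(M)$ are independent and so none is algebraic over the others). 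Once this is settled, Lemma~\ref{aclp} does all the real work and the rest is bookkeeping with monotonicity and idempotence of the two closure operators.
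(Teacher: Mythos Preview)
Your proof is correct and follows essentially the same route as the paper: show $HB(A)\subset\acl_H(A)$ via uniqueness of the $H$-basis, then apply Lemma~\ref{aclp} to the $\acl$-closed, $H$-independent set $\acl(A,HB(A))$. You supply considerably more detail than the paper on why $A_0=\acl(A,HB(A))$ is $H$-independent; note, however, that you do not actually need the equality $H(A_0)=HB(A)$ (which has an edge case if $\mathcal{G}$ admits algebraic completions): the inclusion $HB(A)\subseteq H(A_0)\subseteq H(M)$ together with base monotonicity already gives $A_0\ind_{H(A_0)}H(M)$ from $A_0\ind_{HB(A)}H(M)$.
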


\begin{proof}
By Proposition \ref{H-basis}, it is clear that $HB(A)\subset \acl_H(A)$,
so $\acl_H(A)\supset \acl(A,HB(A))$. On the other hand, $A\cup HB(A)$
is $H$-closed, so by the previous proposition,  $\acl(A\cup HB(A))=
\acl_H(A\cup HB(A))$ and thus $\acl_H(A)\subset \acl(A,HB(A))$
\end{proof}

\section{Supersimplicity}\label{sec:supersimplicity}
Fix $T$ a supersimple theory, $\mathcal{G}$ a type definable set over $\emptyset$, 
and assume that $T$ eliminates $\exists^{large}$ and that the extension property is first order.
 In this section we prove that $T^{ind}_\mathcal{G}$ is supersimple and characterize forking in $T^{ind}_{\mathcal{G}}$.

\begin{thm}\label{simplicity}
The theory $T^{ind}_\mathcal{G}$ is supersimple.
\end{thm}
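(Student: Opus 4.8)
The plan is to exhibit a concrete ternary relation on subsets of a monster model $(M,H(M))$ of $T^{ind}_\mathcal{G}$ and verify that it satisfies the axioms characterizing non-forking in a simple theory (Kim--Pillay), together with the strengthening that gives \emph{super}simplicity (every type does not fork over a finite subset). The natural candidate, guided by the theory of lovely pairs in \cite{BPV} and of $H$-structures in \cite{BeVa}, is
\[
\vec a \mathop{\Ind{}}\nolimits^{H}_{C} \vec b
\quad\Longleftrightarrow\quad
\vec a \ind_{C\,HB(\vec a/C)\,HB(\vec b/C)\,H(C\vec b)} \vec b\, HB(\vec b/C)
\quad\text{and}\quad HB(\vec a/C\vec b)\subseteq HB(\vec a/C)\,H(C\vec b),
\]
i.e. $\vec a$ is $L$-independent from $\vec b$ over the base together with the relevant $H$-bases, \emph{and} adding $\vec b$ to the base does not force new elements of $H$ into the $H$-basis of $\vec a$ beyond those already in $H(C\vec b)$. (One works throughout with $H$-independent — equivalently, after Corollary \ref{aclp2}, $\acl_H$-closed — parameter sets, which is harmless.) First I would record the basic properties of $\mathop{\Ind{}}^{H}$: invariance under $\cL_H$-automorphisms, which is immediate from the definition; finite character and local character, both of which follow from supersimplicity of $T$ applied to $\vec a$ over $C\vec b H(M)$ together with the fact that $HB$ is always a finite set (Proposition \ref{H-basis}); and monotonicity/normality, which are bookkeeping with the $H$-basis identities proved in Section \ref{sec:definablesets} (the additivity formula $HB(\vec a\vec a'/C)=HB(\vec a/C)\cup HB(\vec a'/C\vec a\,HB(\vec a/C))$ and Proposition \ref{changinghb}).

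Next I would prove transitivity and symmetry of $\mathop{\Ind{}}^{H}$, using transitivity and symmetry of forking in $T$ and, crucially, the behaviour of $HB$ under base change from Proposition \ref{changinghb}: the clause about $H$-bases is exactly what is needed to make the ``$H$-part'' of the independence transitive, while the $\cL$-part is inherited from $T$. Then comes the existence/extension axiom: given $\vec a$ and $C\subseteq B$ ($H$-independent), produce $\vec a'$ with $\tp_H(\vec a'/C)=\tp_H(\vec a/C)$ and $\vec a'\mathop{\Ind{}}^{H}_C B$. Here one first realizes $\tp(\vec a\,HB(\vec a/C)/C)$ over $B$ by an $\cL$-non-forking extension using the extension property of the $H$-structure (Definition \ref{Hstructures}(4), in the generalized tuple form), arranging that the images of $HB(\vec a/C)$ land in $H(M)$ via the density property (Definition \ref{Hstructures}(3)); the delicate point is to choose this extension so that \emph{no} new element of $H(M)$ enters the $H$-basis — this is where one invokes the extension property to push $\vec a'$ off $\acl(B\cup H(M))$ except along the controlled finite tuple coming from $HB(\vec a/C)$, and then Proposition \ref{H-indtypes} guarantees the $\cL_H$-type is the prescribed one. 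Finally, the independence theorem (type amalgamation over an $\acl_H$-closed base): given $\vec a_1,\vec a_2$ with $\tp_H(\vec a_1/C)=\tp_H(\vec a_2/C)$, and $B_1\mathop{\Ind{}}^H_C B_2$ with $\vec a_i\mathop{\Ind{}}^H_C B_i$, amalgamate. One reduces to the analogous statement in $T$ for $\vec a_i\,HB(\vec a_i/C)$ over $B_i$ — which holds because $T$ is supersimple and hence satisfies the independence theorem over $\acl^{eq}$-closed sets — and then transfers the amalgam back to $(M,H(M))$ via Proposition \ref{H-indtypes}, checking that the $HB$-clauses are preserved; the hypothesis $B_1\mathop{\Ind{}}^H_C B_2$ ensures $H(C B_1)$ and $H(C B_2)$ interact correctly so that the $H$-bases of the amalgamated tuple are as required.

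The main obstacle I anticipate is the existence/extension step combined with the verification that $\mathop{\Ind{}}^{H}$ has local character with the right base, since it is precisely here that one must control the (finite, but a priori $\vec b$-dependent) set of $H$-elements that get absorbed into the $H$-basis when the base grows; getting the $HB$-clause into the \emph{definition} in a way that is simultaneously symmetric, transitive, and satisfies extension is the technical heart of the argument. Once all the Kim--Pillay axioms are checked, simplicity of $T^{ind}_\mathcal{G}$ follows and $\mathop{\Ind{}}^{H}$ is non-forking; supersimplicity then follows from the local-character clause, because $HB(\vec a/C)$ is finite and, by supersimplicity of $T$, $\vec a\ind_{C_0} C\,H(M)$ for some finite $C_0\subseteq C$, so $\tp_H(\vec a/C)$ does not fork (in the sense of $\mathop{\Ind{}}^{H}$) over a finite subset of $C$. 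The concrete description of forking obtained this way is what will be used in the subsequent sections to compute canonical bases (Proposition \ref{cbTind}).
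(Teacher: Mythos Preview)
Your Kim--Pillay strategy is a legitimate route and would, if carried out, prove supersimplicity and characterize forking in one stroke. The paper, however, takes a much shorter path: it proves directly that \emph{non-dividing has local character}. For $C=\acl_H(C)\subset D=\acl_H(D)$ and $\vec a$, the paper isolates two conditions --- (1) $HB(\vec a/D)=HB(\vec a/C)$ and (2) $\vec a\ind_{CH(M)}DH(M)$ --- and shows by an explicit indiscernible-sequence argument that they imply $\tp_H(\vec a/D)$ does not divide over $C$: given an $\cL_H$-indiscernible $(D_i)_{i<\omega}$ over $C$, use density to realize $\tp(HB(\vec a/D)/D)$ by a single tuple $\vec g\in H(M)$ simultaneously over all $D_i$ (possible since $HB(\vec a/D)$ is an independent tuple of realizations of $\mathcal G$ over each $D_i$), then use (2) together with the extension property to find one $\vec a'$ with $\tp_H(\vec a'\vec g D_i)=\tp_H(\vec a\,HB(\vec a/D)\,D)$ for every $i$, via Proposition~\ref{H-indtypes}. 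Since $T$ is supersimple, a finite $C_0\subset D$ with $\acl_H(C_0)$ satisfying (1)--(2) always exists. The converse --- that (1)--(2) \emph{characterize} non-forking --- is established only afterwards, as the separate Theorem~\ref{charforking}, once supersimplicity is already available.

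This saves the paper from having to verify symmetry, transitivity, extension, and above all the independence theorem for a candidate relation, which you correctly flag as the delicate step. Note also that your proposed $\ind^H$ does not match the relation the paper eventually identifies as non-forking. The paper's clause (2) is independence over $C\cup H(M)$ --- the \emph{entire} predicate --- rather than over $C$ together with the finite sets $HB(\vec a/C)$, $HB(\vec b/C)$; working over all of $H(M)$ makes symmetry and transitivity of the $\cL$-part immediate, and the infinite base is harmless because one only needs local character of non-dividing, not of the auxiliary relation itself. Moreover your $HB$-clause $HB(\vec a/C\vec b)\subseteq HB(\vec a/C)\cup H(C\vec b)$ is the wrong inclusion: since $HB(\vec a/C\vec b)$ is always disjoint from $H(C\vec b)$, your clause reduces to $HB(\vec a/D)\subseteq HB(\vec a/C)$, and the proof of Theorem~\ref{charforking} (Case~2) shows this inclusion holds automatically once (2) does --- the substantive half of condition (1) is the \emph{reverse} containment $HB(\vec a/C)\subseteq HB(\vec a/D)$, which your definition omits. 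If you want to run the Kim--Pillay argument, replace your candidate by the relation given by (1)--(2) over $\acl_H$-closed bases.
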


\begin{proof}
We will prove that non-dividing has local character.

Let $(M,H(M))\models T^{ind}_\mathcal{G}$ be saturated. Let $C\subset D\subset
M$ and assume that $C=\acl_H(C)$ and $D=\acl_H(D)$.
Note that both $C$ and $D$ are $H$-independent. Let $\vec a\in M$.
We will find  a collection of conditions for the type of $\vec a$ over
$C$ that guarantee that $\tp_H(\vec a/D)$ does not divide
over $C$.

Assume that the following conditions hold for $C$:
\begin{enumerate}
\item $HB(\vec a/D)=HB(\vec a/C)$.
\item $\vec a\ind_{CH(M)}DH(M)$
\end{enumerate}

\textbf{Claim} $\tp_H(\vec a/D)$ does not divide over $C$.

Let $\{D_i:i\in \omega\}$ be an
$\cL_H$-indiscernible sequence over $C$. Let $\vec h$ be an enumeration of $HB(\vec a/D)$ and let 
$q(\vec x,D)=\tp(\vec h,D)$. Since $D$ is $H$-independent, the tuple $\vec h$ is independent over $D$
and all components of $\vec h$ realize $\mathcal{G}$.
Thus we can find $\vec g\models \cup _{i\in \omega} q(\vec x,D_i)$ such that
$\{\vec gD_i:i\in \omega\}$ is indiscernible and $\vec g$ is an
independent tuple over $\cup _{i\in \omega} D_i$. All components of $\vec g$ realize $\mathcal{G}$. 
By the generalized
density property, we may assume that $\vec g$  is in $H$. Note that $\vec h D$ is
$H$-independent and $\vec g D_i$ is also $H$-independent for any
$i\in \omega$. So by Proposition \ref{H-indtypes} $\tp_H(\vec h
D)=\tp_H(\vec g D_i)$ for any $i\in \omega$. 

Let $\vec e$ be such that $\tp_H(\vec a,D,\vec h)=\tp_H(\vec e,D_0,\vec g)$.
Let $p(\vec x,D,\vec z)=\tp(\vec a,D,\vec h)$. Then $\vec e\models p(\vec x, D_0, \vec g)$.
Since $\vec a\ind_{CH(M)}DH(M)$, we get that $\vec a\ind_{C\vec h}D\vec h$ and so
$\vec e\ind_{C\vec g}D_0\vec g$. Since $\{\vec gD_i:i\in \omega\}$ is an 
$\cL$-indiscernible sequence over $C\vec g$, we can find $\vec a'\models \cup _{i\in \omega} p(\vec x,D_i,\vec g)$ such that $\{\vec a'\vec gD_i:i\in \omega\}$ is $\cL$-indiscernible and $\vec a'$ is
independent from $\cup _{i\in \omega} D_i\vec g$ over $C\vec g$. By the generalized
extension property, we may assume that $\vec a'\ind_{\cup_{i\in \omega}D_i\vec g}H(M)$ 
and by transitivity we get $\vec a'\ind_{C\vec g}D_iH(M)$ and so $\vec a'\ind_{D_i\vec g}H(M)$
for each $i\in \omega$. Note that $\vec a \vec h D$ is
$H$-independent and $\vec a' \vec g D_i$ is also $H$-independent for any
$i\in \omega$. Thus, by Proposition \ref{H-indtypes}, $\tp_H(\vec a \vec h D)=\tp_H(\vec a' \vec g D_i)$ for 
any $i\in \omega$.

This shows that $\tp_H(\vec a/D)$ does not divide over $C$.

Since $T$ is supersimple, for any $D$ and $\vec a$ we can always choose a finite
subset $C_0$ of $D$ such that $C=\acl_H(C_0)$ satisfies the conditions (1) and (2) above. This shows that $T^{ind}_{\mathcal{G}}$ is supersimple.
\end{proof}

\begin{notation}
Whenever $(M,H(M))\models T^{ind}_\mathcal{G}$ is sufficiently saturated and $A,B,C\subset M$,
we write $A\ind^{ind}_CB$ for $tp_H(A/BC)$ does not fork over $C$. 
\end{notation}

\begin{thm}\label{charforking}
Let $(M,H)\models T^{ind}_\mathcal{G}$ be saturated, let $C\subset D\subset M$ be
such that $C=\acl_H(C)$, $D=\acl_H(D)$ and let $\vec a\in M$.
Then $\tp_H(\vec a/D)$ does not fork over $C$ if and only if:
\begin{enumerate}
\item $HB(\vec a/D)=HB(\vec a/C)$.
\item $\vec a\ind_{CH(M)}DH(M)$
\end{enumerate}
\end{thm}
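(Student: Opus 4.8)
The plan is to prove the two directions separately, using Theorem \ref{simplicity} for one of them. For the "if" direction, conditions (1) and (2) are exactly the hypotheses used in the proof of Theorem \ref{simplicity} to show that $\tp_H(\vec a/D)$ does not divide over $C$; since $T^{ind}_\mathcal{G}$ is simple, non-dividing equals non-forking for complete types, so (1) and (2) already give that $\tp_H(\vec a/D)$ does not fork over $C$. Thus the substantive content is the "only if" direction: assuming $\tp_H(\vec a/D)$ does not fork over $C$, we must recover (1) and (2).

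For the "only if" direction, first I would establish (2). The key observation is that by Corollary \ref{aclp2}, $\acl_H(D) = D$ already contains $HB(D)$ and $D$ is $\acl$-closed and $H$-independent; similarly for $C$. I would use the characterization of non-forking in $T^{ind}_\mathcal{G}$ in terms of indiscernible sequences together with the fact (established in the proof of Theorem \ref{simplicity}) that forking-independence in the sense of $T$ relative to $H(M)$ controls $\cL_H$-forking. Concretely: since $\tp_H(\vec a/D)$ does not fork over $C$, take a Morley sequence in this type over $C$; by the generalized extension property and Proposition \ref{H-indtypes} one transfers this to an $\cL$-statement, forcing $\vec a \ind_{C H(M)} D H(M)$ in the sense of $T$. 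Then for (1): since $\vec a \ind_{CH(M)} DH(M)$ and $C \subset D$, transitivity of forking-independence in $T$ gives $\vec a \ind_{C\, HB(\vec a/C)} D$, which by minimality of the $H$-basis over $D$ yields $HB(\vec a/D) \subseteq HB(\vec a/C)$; the reverse inclusion $HB(\vec a/C) \subseteq HB(\vec a/D)$ follows from Proposition \ref{changinghb} together with the fact that $D = \acl_H(D)$ already contains its own $H$-basis, so $HB(\vec a/C) \subseteq HB(D) \cup HB(\vec a/D) \subseteq D$, and combined with $HB(\vec a/C) \subseteq H(M)$ and $H$-independence of $D$ one gets $HB(\vec a/C) \subseteq HB(\vec a/D)$.

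The main obstacle I expect is the derivation of condition (2) from the bare assumption of $\cL_H$-non-forking: one only knows abstractly (by simplicity, Theorem \ref{simplicity}) that $\tp_H(\vec a/D)$ does not divide over $C$, and must translate this back into a statement about $T$-forking of $\vec a$ against $H(M)$. The technique is to build a sufficiently long $\cL_H$-indiscernible (Morley) sequence $(D_i)_{i<\omega}$ over $C$ with $D_0 = D$, note that $\vec a$ can be taken $\cL_H$-independent from $\bigcup_i D_i$ over $C$, and then apply Proposition \ref{H-indtypes} to lift everything to $\cL$-indiscernibility of the corresponding tuples with their $H$-bases attached. Since $HB(\vec a/C)$ is fixed along the sequence (it lies in $C$-controlled data), the $\cL$-Morley sequence witnesses $\tp(\vec a / D\, HB(\vec a/C))$ non-dividing over $C\, HB(\vec a/C)$, hence non-forking, and spreading independence over $H(M)$ via the generalized extension property upgrades this to $\vec a \ind_{CH(M)} DH(M)$. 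Care is needed to ensure the $H$-bases along the Morley sequence are genuinely disjoint realizations inside $H(M)$, which is where the generalized density property is invoked, exactly as in the proof of Theorem \ref{simplicity}.

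Finally I would remark that, with both directions in hand, $\ind^{ind}$ is given by the clean formula: $\vec a \ind^{ind}_C D$ iff $HB(\vec a/D) = HB(\vec a/C)$ and $\vec a \ind_{CH(M)} DH(M)$, which sets up the description of canonical bases in Proposition \ref{cbTind}. I would also note that the hypotheses $C = \acl_H(C)$, $D = \acl_H(D)$ are harmless since forking does not change under passing to the $\cL_H$-algebraic closure, and by Corollary \ref{aclp2} such closures are obtained by adjoining finite tuples from $H(M)$, so the general statement reduces to this case.
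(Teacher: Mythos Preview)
Your handling of the ``if'' direction is fine: it is indeed exactly the content of the claim in the proof of Theorem \ref{simplicity}. The ``only if'' direction, however, has genuine gaps in both parts.

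\textbf{Condition (1).} Your derivation of $HB(\vec a/D)\subseteq HB(\vec a/C)$ from condition (2) is correct. But your argument for the reverse inclusion is wrong as written: from Proposition \ref{changinghb} you only get $HB(\vec a/C)\subseteq H(D)\cup HB(\vec a/D)$, and nothing about $H$-independence of $D$ forces $HB(\vec a/C)\cap H(D)=\emptyset$. Indeed this inclusion can fail when $\vec a\nind^{ind}_C D$ (take $C=\emptyset$, $D=\acl_H(h)$ for a single $h\in H(M)$ with $HB(\vec a)=\{h\}$). You must use the non-forking hypothesis here. The paper's argument is: if $h\in H_E:=HB(\vec a/C)\setminus HB(\vec a/D)$, then $h\in D$ by Proposition \ref{changinghb} and $h\in\acl_H(\vec a C)$ by Corollary \ref{aclp2}; hence $\vec a\ind^{ind}_C D$ forces $h\ind^{ind}_C h$, contradicting $h\notin C=\acl_H(C)$.

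\textbf{Condition (2).} Your sketch here does not go through. First, you cannot freely pass from an $\cL_H$-Morley sequence $(D_i)$ over $C$ to an $\cL$-Morley sequence: that implication is proved later (in Proposition \ref{cbTind}) \emph{using} the present theorem, so invoking it here is circular. Second, the claim that ``$HB(\vec a/C)$ is fixed along the sequence (it lies in $C$-controlled data)'' is false: $HB(\vec a/C)$ is a subset of $H(M)$ disjoint from $C$, not contained in it. The paper instead argues by contrapositive: assuming $\vec a\nind_{CH(M)}DH(M)$, it builds an $\cL$-Morley sequence $(D_i)$ in $\tp(D/CH(D))$, uses the generalized extension property to make it independent from $H(M)$ over $CH(D)$ (so that each $D_i$ is $H$-independent with $H(D_i)=H(D)$, hence $\cL_H$-indiscernible by Proposition \ref{H-indtypes}), and then shows that any putative realization $\vec a'$ of $\bigcup_i\tp_H(\vec a/D_i)$ would fork in the sense of $T$ with every $D_iH(M)$ over $CH(M)$, contradicting local character in $T$. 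This yields $\vec a\nind^{ind}_{CH(D)}D$, hence $\vec a\nind^{ind}_C D$. The key point you are missing is that the indiscernible sequence must be built in $T$ over the larger base $CH(D)$ and then promoted to $\cL_H$-indiscernibility, rather than starting from an $\cL_H$-Morley sequence and trying to descend.
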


\begin{proof}
In the proof of Theorem \ref{simplicity} we showed that if the two conditions listed hold, then 
$\tp_H(\vec a/D)$ does not fork over $C$. It remains to show the other direction. 

\textbf{Case 1:} Assume that $\vec a\nind_{CH(M)}DH(M)$. 

%Let $H_0=HB(\vec a/D)$ so $H_0\subset \acl_H(\vec a D)$. 
%We claim that $\vec a\nind_{CH_0H(D)}D$. Otherwise $\vec a \ind_{CH_0H(D)}D$ and since $\vec a %\ind_{DH_0} H(M)$, we get  $\vec a \ind_{CH_0H(D)}DH(M)$ and thus $\vec a \ind_{CH(M)}DH(M)$
%a contradiction.

\textbf{Claim} $\vec a\nind^{ind}_{CH(D)}D$.

Let $p(\vec x, \vec y, CH(D))=\tp_H(\vec a, D / CH(D))$, where we view 
$D$ as an ordered tuple. Let $\{D_i:i\in \omega\}$ be an $\cL$-Morley
sequence in $\tp(D/CH(D))$. By the generalized extension property, we may assume that
$\{D_i:i\in \omega\}$ is independent from $H(M)$ over $CH(D)$. In particular, for every $i$,
$D_i\ind_{CH(D)} H(M)$. Since $C\ind_{H(C)}H(M)$ (as $C=\acl_H(C)$) then $C\ind_{H(D)}H(M)$ and by transitivity 
$D_i\ind_{H(D)} H(M)$, so $H(D_i)\subset H(D)$. Moreover, since $D_i\models \tp(D/CH(D))$ and $H(D)\subset D$, we have that $H(D)\subset D_i$. Thus $H(D)\subset H(D_i)$, i.e. $H(D)=H(D_i)$. 

By Proposition \ref{H-indtypes}, $\tp_H(D)=\tp_H(D_i)$. Furthermore since $\{D_i:i\in \omega\}$ is Morley over $CH(D)$ and independent from $H(M)$ over $CH(D)$, $\{D_i:i\in \omega\}$ is an $\cL_H$-indiscernible 
sequence over $CH(D)$. We will show that $\cup_{i\in \omega}p(x,D_i, CH(D))$ is inconsistent.
Assume, in order to get a contradiction, that there is
$$\vec a'\models \cup_{i\in \omega}p(x,D_i, CH(D))$$ 

%Since $\tp_H(\vec a,DH(D))=\tp_H(\vec a',D_iH(D))$, if we define $H_i'= HB(\vec a'/D_i)$ then $\tp_H(\vec %a,DH(D)H_0)=\tp_H(\vec a',D_iH(D)H_i')$, 
%and since  $\vec a\nind_{CH(D)H_0}D$ we get $\vec a'\nind_{CH(D)H_i'}D_i$. 

Since $\tp_H(\vec a D/CH(D))=\tp_H(\vec a' D_i/CH(D))$ and $\vec a\nind_{CH(M)}DH(M)$ we also
get that $\vec a'\nind_{CH(M)}D_iH(M)$. 

This shows that $\vec a'$ forks with each term in the independent sequence 
$\{D_iH(M):i\in \omega\}$ over $CH(M)$, a contradiction to local character in $T$.

We just showed that $\vec a\nind^{ind}_{CH(D)}D$. On the other hand, if $\vec a\ind^{ind}_{C}D$, since $H(D)\subset D$ we also get $\vec a\ind^{ind}_{CH(D)}D$, a contradiction. Thus we get $\vec a\nind^{ind}_{C}D$ as we wanted.

%This proves that $\tp_H(\vec a/DCH(D)H_0)$ forks over $CH(D)H_0$. Since 
%$H_0\subset \acl_H(\vec a C)$ we also get that $\tp_H(\vec a/DH(D))$ forks over $CH(D)$
%as we wanted.

\textbf{Case 2:} Assume that $\vec a\ind_{CH(M)}DH(M)$ and that $HB(\vec a/D)\neq HB(\vec a/C)$. 

We will first prove:

\textbf{Claim} $HB(\vec a/D)$ is a proper subset of $HB(\vec a/C)$. 

Write $H_D=HB(\vec a/D)$ and  $H_C=HB(\vec a/C)$. Since
$\vec a\ind_{CH_C}H$, from $\vec a\ind_{CH}DH$ we get $\vec a\ind_{CH_C}DH$
and so $\vec a\ind_{DH_C}H$. By minimality of $H$-basis, we have $H_D\subset H_C$
and since the two sets are not equal we get the claim.

Let $H_E=H_C\setminus H_D$, $H_E$ is a set of independent elements over $C$.

Assume, in order to get a contradiction, that $\tp_H(\vec a/D)$ does not fork over $C$.
Then $\vec a\ind^{ind}_CD$. By Proposition \ref{changinghb}, we get that $H_E\subset D$ and by Corollary \ref{aclp2}, $H_E\subset \acl_H(\vec a C)$, so $H_E\ind^{ind}_C H_E$, a contradiction

%Let $p(\vec x,\vec y)=tp_H(\vec a,H_E/C)$. Let $\{H_E^i: i\in \omega\}$ be
%an $\cL$-indiscernible sequence in $\tp(H_E/C)$ such that
%$\{H_E^i: i\in \omega\}$ is independent over $C$. By the generalized
%density property, we may assume that the sequence $\{H_E^i: i\in \omega\}$
%belongs to $H$. Note that by Proposition \ref{H-indtypes}, the sequence
%$\{H_E^i: i\in \omega\}$ is $\cL_H$-indiscernible over $C$. We will show that
%$\cup_{i\in \omega}p(\vec x,H_E^i)$ is inconsistent.
%Assume, not, so there is $\vec a'\models \cup_{i\in \omega}p(x,H_E^i)$. Then
%we can find $H_{D_i}$ in $H$ such that
%$HB(\vec a'/C)=H_{D_i} H_E^i$. Since the $H_E^i$ are independent, we get
%that the $H$-basis of $\vec a'$ over $C$ is not unique, a contradiction.

\end{proof}

%We use the above result to give a different perspective on $H$-basis.

%\begin{lem}\label{H-basisrel}
%Let $(M,H(M))$ be an $H$-structure. Let $\vec a=(a_1,\dots,a_n)\in M$ and
%let $C\subset M$ be such that $C$ is $H$-independent. Let $\vec h$ be a minimal 
%tuple such that $\dim_{\ccl}(\vec a/C\vec h)=\dim_{\ccl}(\vec a/CH)$, then 
%$\vec h=HB(\vec a/C)$.
%\end{lem}

%\begin{proof}
%Write $\vec a=\vec a_1\vec a_2$, where $\vec a_1$ are independent generics over $CH$
%and $\vec a_2\in \ccl(\vec a_1CH)$. Choose $\vec h$ minimal so that 
%$\vec a_2\in \ccl(\vec a_1 C\vec h)$. Then $\vec a_1$ are independent generics 
%over $CH$ and $SU(\vec a_2/\vec a_1C\vec h)<\omega$. Then $\tp(\vec a_1/C)$
%is independent from $H$ and $\tp(\vec a_2/\vec a_1C\vec h)$ is orthogonal to $H$.
%We get $\vec a\ind_{C\vec h}H$ and $HB(\vec a/C)\subset \vec h$. For the other direction, 
%$\vec a\ind_{C HB(\vec a/C)}H$ implies that 
%$\dim_{\ccl}(\vec a/CHB(\vec a/C))=\dim_{\ccl}(\vec a/CH)$ and by minimality of $\vec h$
%we get $HB(\vec a/C)\subset \vec h$.
%\end{proof}

We are interested in characterizing canonical bases. Note that we can work in $T^{eq}$, and thus, $\acl=\acl^{eq}$. We start with the following result which
holds also in the geometric setting (with $e\in\acl^{eq}(B)$):

\begin{lem}\label{precb}
Let $(M,H)\models T^{ind}_\mathcal{G}$ be  sufficiently saturated,
$B\subset M$ be an $H$-independent set, and $\vec a\in M$, $
h=HB(\vec a/B)$. Suppose $e\in\acl^{eq}(B)$ (in the sense of $T$) is such
that $\vec a h\ind_eB$. Then $\vec a\ind^{ind}_eB$.
\end{lem}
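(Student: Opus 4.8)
The plan is to use the forking characterization from Theorem~\ref{charforking}, which says that $\tp_H(\vec a/B)$ does not fork over $e$ (after replacing $e$ by $\acl_H(e)$ and $B$ by $\acl_H(B)$, which changes nothing relevant since $e\in\acl^{eq}(B)$ and everything is already $H$-independent) is equivalent to the two conditions: (1) $HB(\vec a/B)=HB(\vec a/e)$, and (2) $\vec a\ind_{eH(M)}BH(M)$. So I would verify these two conditions from the hypothesis $\vec a h\ind_e B$, where $h=HB(\vec a/B)$. The first thing to observe is that, since $e\in\acl^{eq}(B)$, we have $H(\acl_H(e))\subseteq H(\acl_H(B))$, and in fact from $\vec a h\ind_e B$ together with $h\subseteq H(M)$ one should be able to pin down $HB(\vec a/e)$.

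For condition (1): since $h=HB(\vec a/B)$ we have $\vec a\ind_{Bh}H(M)$, hence $\vec a\ind_{\acl_H(B)h}H(M)$, so $HB(\vec a/\acl_H(B))\subseteq h$; but also $HB(\vec a/B)=h$ forces $h\subseteq HB(\vec a/\acl_H(B))$ (any smaller set would contradict minimality over $B$ once we intersect, using that $H$ is an independent set and Remark~\ref{H-basisrel}), so $HB(\vec a/\acl_H(B))=h$. Now I claim $h\subseteq \acl_H(\vec a e)$: indeed, $\vec a h\ind_e B$ gives $h\ind_{\vec a e}B$, and since $h$ consists of independent realizations of $\mathcal G$ lying in $H(M)$, a standard argument (as in the proof of Lemma~\ref{aclp}, Case 2, or Corollary~\ref{aclp2}) shows $h\subseteq\acl_H(\vec a e)$; combined with $h\subseteq H(M)$ this makes $h$ a candidate for $HB(\vec a/e)$, and minimality plus $h=HB(\vec a/B)\supseteq HB(\vec a/e)$ (monotonicity of $H$-bases under enlarging the base, which follows from Proposition~\ref{changinghb} since $e\subseteq\acl_H(B)$) yields $HB(\vec a/e)=h=HB(\vec a/B)$.

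For condition (2): from $\vec a h\ind_e B$ we get $\vec a\ind_{eh}B$, hence $\vec a\ind_{eh}Bh$ (adding $h$ to the right of an independence already over $eh$), and since $h=HB(\vec a/B)$ we have $\vec a\ind_{Bh}H(M)$; transitivity of forking independence then gives $\vec a\ind_{eh}BH(M)$, i.e.\ $\vec a\ind_{eh}BH(M)$. It remains to push the base from $eh$ down to $eH(M)$: since $h\subseteq H(M)$ we have $\vec a\ind_{eH(M)}BH(M)$ provided $\vec a\ind_{eh}H(M)$, which holds because $h=HB(\vec a/B)\supseteq HB(\vec a/e)$ and hence $\vec a\ind_{eh}H(M)$ by definition of $H$-basis (once we know $HB(\vec a/e)\subseteq h$ from the previous paragraph). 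So both conditions hold and Theorem~\ref{charforking} gives $\vec a\ind^{ind}_e B$.

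The main obstacle I anticipate is the bookkeeping around $e$ being an imaginary in $\acl^{eq}(B)$ rather than a genuine subset of $M$: one must be careful that the notions $HB(\vec a/e)$, $\acl_H(e)$, and the $H$-independence of $e\cup B$ all make sense and behave well, using the remarks in the paper that the $H$-basis and the results on $\acl_H$ extend to $M^{eq}$, and that $H$-independence of $B$ together with $e\in\acl^{eq}(B)$ gives $H$-independence of $Be$. The delicate point is establishing $h\subseteq\acl_H(\vec a e)$ cleanly — this is where the independence of the elements of $H$ over $\emptyset$ (Definition~\ref{Hstructures}(2)) together with $h\ind_{\vec a e}B$ does the work, essentially repeating the Case~2 argument of Lemma~\ref{aclp}: if some $h_0\in h$ were not in $\acl_H(\vec a e)$ one could move it to infinitely many independent copies over $\vec a e$ while preserving $\tp_H$, contradicting $h\ind_{\vec a e}B$ since the copies would have to stay in $\acl_H(\vec a e B)=\acl_H(\vec a B)$ which contains only finitely many elements of $H$ independent over $\vec a$. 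Everything else is routine transitivity and monotonicity of forking.
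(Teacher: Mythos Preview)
Your overall strategy---verify the two conditions of Theorem~\ref{charforking}---is exactly the paper's, and your argument for condition~(2) is essentially correct (the final ``provided $\vec a\ind_{eh}H(M)$'' is superfluous: once you have $\vec a\ind_{eh}BH(M)$ and $h\subseteq H(M)$, base monotonicity alone gives $\vec a\ind_{eH(M)}BH(M)$). The problems are all in condition~(1), where two of your steps do not go through.

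First, your appeal to Proposition~\ref{changinghb} for ``monotonicity of $H$-bases under enlarging the base'' does not give $HB(\vec a/\acl_H(e))\subseteq h$. Applied with $C=\acl_H(e)\subset D=\acl_H(B)$, that proposition only yields $HB(\vec a/\acl_H(e))\subseteq H(B)\cup h$, and nothing rules out elements of $H(B)\setminus HB(e)$ appearing in $HB(\vec a/\acl_H(e))$. The good news is that the inclusion $h_0:=HB(\vec a/\acl_H(e))\subseteq h$ is already implicit in your work on condition~(2): from $\vec a\ind_{eh}BH(M)$ you get $\vec a\ind_{\acl_H(e)h}H(M)$, and minimality of $h_0$ gives $h_0\subseteq h$ directly. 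This is how the paper does it.

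Second, and more seriously, your argument for $h\subseteq\acl_H(\vec a e)$ breaks down. From $h\ind_{\vec a e}B$ you try to argue that if some $h'\in h$ were outside $\acl_H(\vec a e)$, its infinitely many $\tp_H$-copies over $\vec a e$ ``would have to stay in $\acl_H(\vec a B)$''. But the copies only share $\tp_H(h'/\vec a e)$, not $\tp_H(h'/\vec a e B)$, so nothing forces them into $\acl_H(\vec a B)$, and no contradiction with $h\ind_{\vec a e}B$ materializes. (Note also that $h\subseteq\acl_H(\vec a B)$ does \emph{not} imply $h\subseteq\acl(\vec a B)$, so one cannot fall back on $T$-algebraicity either.) Even granting the claim, your passage from $h\subseteq\acl_H(\vec a e)$ to ``$h$ is a candidate for $HB(\vec a/e)$'' and then to $h\subseteq h_0$ is not spelled out; it requires an extra argument using independence of $H(M)$ and $h\cap HB(e)=\emptyset$.

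The paper avoids the $\acl_H$-closure detour entirely and gets $h\subseteq h_0$ by pure forking calculus: by definition $\vec a\ind_{\acl_H(e)h_0}H(M)$, hence $\vec a\ind_{\acl_H(e)h_0}h$; since $h_0\subseteq h$ one has $\acl_H(e)h_0h=\acl_H(e)h$, so transitivity with $\vec a\ind_{\acl_H(e)h}BH(M)$ gives $\vec a\ind_{\acl_H(e)h_0}BH(M)$, whence $\vec a\ind_{Bh_0}H(M)$ (as $\acl_H(e)\subseteq\acl(B)$), and minimality of $h=HB(\vec a/B)$ yields $h\subseteq h_0$.
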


\begin{proof}
We use the characterization of forking given in Theorem \ref{charforking}.  

We first prove that $\vec a\ind_{\acl_H(e)H(M)}BH(M)$. Since $h=HB(\vec a/B)$, we have that $\vec a\ind_{Bh}BH(M)$ and thus 
$\vec a\ind_{Beh}BH(M)$. By assumption $\vec a h\ind_eB$, so  $\vec a \ind_{eh}B$ and
by transitivity $\vec a\ind_{eh}BH(M)$ and we get $\vec a\ind_{eH(M)}BH(M)$ and 
$\vec a\ind_{\acl_H(e)H(M)}BH(M)$.

Now we prove that $HB(\vec a/B)=HB(\vec a/\acl_H(e))$.  Let $h_0=HB(\vec a/\acl_H(e))$. We have 
$\vec a\ind_{B h}H(M)$ and since $e\in\acl^{eq}(B)$ and $\acl(B)=\acl_H(B)$ we 
get $\vec a \ind_{B\acl_H(e) h}H(M)$. But $\vec a \ind_{\acl_H(e)h}B$, so by transitivity 
we get  $\vec a \ind_{\acl_H(e)h}H(M)$. This shows that $h\supset h_0$.

 Since $\vec a h\ind_{\acl_H(e) }B$ we get 
$\vec a \ind_{\acl_H(e) h}B$ and by transitivity $\vec a \ind_{\acl_H(e) h}BH(M)$. On the other hand, 
$\vec a \ind_{\acl_H(e) h_0}H(M)$, so $\vec a \ind_{\acl_H(e) h_0}h$ and we get $\vec a \ind_{\acl_H(e) h_0}BH(M)$
so $\vec a \ind_{B h_0}H(M)$ and $h\subset h_0$.

\end{proof}

Finally, the following result on canonical bases can be proved using a minor modification of the
argument presented in \cite{BV-Tind}:

\begin{prop}\label{cbTind} Let $(M,H)$ be a sufficiently saturated $H$-structure of $T$,
$B\subset M$ an $H$-independent set, and $\vec a\in M$. Then
 $Cb_H(\vec a/B)$ and $Cb(\vec a HB(\vec a/B)/B))$ are interalgebraic.
\end{prop}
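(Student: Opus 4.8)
The plan is to prove the two canonical bases are interalgebraic by a standard double-inclusion argument, in each direction using the characterization of forking in $T^{ind}_{\mathcal{G}}$ from Theorem \ref{charforking} together with Lemma \ref{precb}. Throughout set $h = HB(\vec a/B)$ and write $e = Cb(\vec a h/B)$ computed in the sense of $T$ (living in $\acl^{eq}(B)$ after passing to $T^{eq}$), and $f = Cb_H(\vec a/B)$. Since $B$ is $H$-independent, all the manipulations with forking in $T$ over subsets of $B$ behave well, and by Corollary \ref{aclp2} applied to $\acl_H$-closures of subsets of $B$ we have $\acl_H = \acl$ on such sets.

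First I would show $e \in \acl_H(f)$, equivalently $e\in \acl^{eq}$ of $Cb_H(\vec a/B)$. The idea: $f = Cb_H(\vec a/B)$ means $\tp_H(\vec a/B)$ does not fork over $\acl_H^{eq}(f)$; by Theorem \ref{charforking} this gives $HB(\vec a/B) = HB(\vec a/\acl_H(f))$ and $\vec a \ind_{\acl_H(f) H(M)} B H(M)$. From the first equality $h \in \acl_H(f)$-closed set, and together with the second, I can deduce $\vec a h \ind_{\acl_H(f)} B$ in the sense of $T$ (pull $h$ from $H(M)$ into the base, use that $h = HB(\vec a/B)$ to see $\vec a \ind_{Bh} H(M) \supseteq$ the relevant data, then transitivity). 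Hence $\vec a h$'s type over $B$ in $T$ does not fork over $\acl_H(f) \cap \acl^{eq}(B)$, so $Cb(\vec a h/B) = e$ is algebraic over $f$ (in $T^{eq}$, hence in $\acl_H$). This direction is essentially bookkeeping with transitivity and the definition of canonical base in a simple theory.

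For the reverse, $f \in \acl_H(e)$: I want $\vec a \ind^{ind}_{\acl_H(e)} B$, which by definition of $Cb_H$ forces $f \in \acl_H^{eq}(\acl_H(e)) = \acl_H(e)$. Here is exactly where Lemma \ref{precb} does the work: we have $e = Cb(\vec a h/B)$, so $\vec a h \ind_e B$ in the sense of $T$, and $e \in \acl^{eq}(B)$; the hypotheses of Lemma \ref{precb} are met with this $e$ and $h = HB(\vec a/B)$, yielding exactly $\vec a \ind^{ind}_e B$, i.e. $\tp_H(\vec a/B)$ does not fork over $e$. Since $f$ is the canonical base, $f \in \acl_H^{eq}(e)$.

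Combining the two inclusions gives $\acl_H^{eq}(e) = \acl_H^{eq}(f)$, i.e. $Cb(\vec a HB(\vec a/B)/B)$ and $Cb_H(\vec a/B)$ are interalgebraic, as claimed. The main obstacle I anticipate is the first direction: carefully extracting the $T$-statement $\vec a h \ind_{\acl_H(f)} B$ from the two $\cL_H$-forking conditions of Theorem \ref{charforking} requires juggling the base $H(M)$ correctly (adding and removing $h$, which is an $HB$ and hence controls the interaction of $\vec a$ with $H(M)$) and invoking $H$-independence of $B$ so that $\acl_H$ and $\acl$ coincide where needed; the reverse direction is comparatively clean since Lemma \ref{precb} packages the delicate step. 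One should also note, as the excerpt does, that everything takes place in $M^{eq}$ and $\acl = \acl^{eq}$ there, so ``interalgebraic'' is meant in $T^{eq}$/$(T^{ind}_{\mathcal{G}})^{eq}$.
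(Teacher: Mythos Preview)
Your direction $Cb_H(\vec a/B) \in \acl_H^{eq}(e)$ via Lemma \ref{precb} is exactly what the paper does. The reverse inclusion, however, has a real gap: you want to apply Theorem \ref{charforking} with $C = \acl_H(f)$ where $f = Cb_H(\vec a/B)$, but $f$ lives in $(T^{ind}_{\mathcal{G}})^{eq}$, not in $M$ (nor even in $M^{eq}$ in the sense of $T$, which is all the convention $T=T^{eq}$ buys you). Theorem \ref{charforking} is stated for $C \subset D \subset M$, and its condition (2), namely $\vec a \ind_{CH(M)} DH(M)$, is a statement about forking in $T$ --- this is not even meaningful over a base containing genuinely new $\cL_H$-imaginaries. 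Indeed, the whole point of Proposition \ref{cbTind} is to establish, after the fact, that such new imaginaries are interalgebraic with old ones; you cannot assume this while proving it. So there is no way to extract the $T$-independence $\vec a h \ind_{\acl_H(f)} B$ directly from the theorem, and the ``juggling'' you anticipate is not the main obstacle --- the step fails before you get there.

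The paper avoids this by arguing with Morley sequences, which consist of real tuples. It takes an $\cL_H$-Morley sequence $\{\vec a_i : i<\omega\}$ in $\stp_H(\vec a/B)$, sets $h_i = HB(\vec a_i/B)$, and uses Theorem \ref{charforking} (now over real bases) to show that $\{\vec a_i h_i\}$ is in fact an $\cL$-Morley sequence in $\tp(\vec a h/B)$; hence $e \in \dcl^{eq}(\{\vec a_i h_i\})$. A second application of Theorem \ref{charforking} shows that for large $N$ the $h$-bases satisfy $h_n \in \acl_H(\{\vec a_i\})$, so $e \in \acl_H^{eq}(\{\vec a_i : N \le i < \omega\})$. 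Since this tail is $\ind^{ind}$-independent from $B$ (and hence from $e$) over $f$, one concludes $e \in \acl_H^{eq}(f)$. All bases in this argument are real, so the issue above never arises.
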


\begin{proof}
Let $e=Cb(\vec a HB(\vec a/B)/B))$. We saw in the previous lemma
that $\vec a\ind^{ind}_{e} B$ and thus $Cb_H(\vec a/B)\in
\acl_H^{eq}(e)$.

We will now prove that $e$ is in the algebraic closure of any Morley
sequence in $\stp_H(\vec a/B)$.

Let $\{\vec a_i: i<\omega\}$ be an
$\cL_H$-Morley sequence in $\tp_H(\vec a/\acl^{eq}_H(B))$.   Let
$h_j=HB(\vec a_j/B)$ (viewed as an imaginary representing a finite
set), so we have $h_j\in \dcl_H(\vec a_j B)$. Thus $\{\vec a_i h_i:
i<\omega\}$ is also an $\cL_H$-Morley sequence over $B$. 

\textbf{Claim} $\{\vec a_i h_i:
i<\omega\}$ is also an $\cL$-Morley sequence over $B$. 

Since $\{\vec a_i h_i: i<\omega\}$ is an $\cL_H$-Morley sequence over $B$, by Theorem  \ref{charforking}, we have
$h_j=HB(\vec a_j/B\vec a_{<j}h_{<j})$ and  $\vec a_j h_j \ind_{BH(M)}\vec a_{<j} h_{<j}H(M)$. Since $\vec a_j h_j \ind_{Bh_j}H(M)$
we get that $\vec a_j h_j \ind_{Bh_j}\vec a_{<j} h_{<j}$. But $h_j=HB(\vec a_j/B\vec a_{<j}h_{<j})$
and $B\vec a_{<j}h_{<j}$ is $H$-independent, so $B\vec a_{<j} h_{<j}\ind h_j$, and 
it follows that $\tp(\vec a_j h_j/B\vec a_{<j}h_{<j})$
does not fork (in the sense of $\cL$) over $B$. Thus, $\{\vec a_i h_i: i<\omega\}$ is a
$\cL$-Morley sequence in $\tp(\vec a h/B)$ over $B$. 

By supersimplicity of $T$, we know that $e$ is in the definable closure of an initial segment of $\{\vec a_i h_i: i<\omega\}$.
 
On the other hand, since $T^{ind}_{\mathcal{G}}$ is supersimple there is $N\in \omega$ such that
for all $n\geq N$, $\vec a_n\ind^{ind}_{\vec a_{<N}} B$. By Proposition \ref{aclp2}
$\acl_H(\vec a_{<N})$ is $H$-independent.
By Theorem \ref{charforking} and the fact that $\{\vec a_i : i<\omega\}$ is a Morley sequence in
$\tp_H(\vec a/\acl^{eq}_H(B))$, $HB(\vec a_n/B)=HB(\vec a_n/\acl_H(B\vec a_{<N}))=HB(\vec a_n/\acl_H(\vec a_{<N}))$ and in
particular $h_n\in \acl_H(\vec a_i : i<\omega)$ for every $n\geq N$. We then get
$e\in \acl^{eq}_H(\{\vec a_i : N\leq i<\omega\})$ as we wanted.

Next, since $\{\vec a_i : i<\omega\}$ is a Morley sequence in
$\tp_H(\vec a/\acl^{eq}_H(B))$, we have
$$\{\vec a_i : N\leq i<\omega\}\ind^{ind}_{Cb_H(\vec a/B)}B,$$
and thus also $$\{\vec a_i : N\leq i<\omega\}\ind^{ind}_{Cb_H(\vec
a/B)}e.$$ It follows that $e\in \acl_H^{eq}(Cb_H(\vec a/B))$, as
needed.
\end{proof}

\begin{prop}
Assume that now that $T$ is also superstable. Then $T_{\mathcal{G}}^{ind}$ is superstable.
\end{prop}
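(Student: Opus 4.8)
The plan is to reduce superstability to stability. By Theorem \ref{simplicity} the theory $T^{ind}_{\mathcal{G}}$ is already supersimple, and a theory is superstable exactly when it is both stable and supersimple: in any simple theory forking and dividing agree, so the local character of non-forking over finite subsets of the domain passes between the supersimple and the superstable settings. Hence it is enough to show that $T^{ind}_{\mathcal{G}}$ is stable, and for that I would prove that it is $\lambda$-stable for every $\lambda$ satisfying $\lambda=\lambda^{|T|}$; since $T$ is assumed superstable it is stable and hence $\lambda$-stable for all such $\lambda$, and the idea is to bound $|S_1^{\cL_H}(A)|$ by a sum of sizes of spaces of $\cL$-types.

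The heart of the argument is the following counting principle, essentially already contained in Section \ref{sec:definablesets}: if $A=\acl_H(A)$ (so that $A$ is $H$-independent, as noted in the proof of Theorem \ref{simplicity}), then for a finite tuple $\vec a$ the $\cL_H$-type $\tp_H(\vec a/A)$ is determined by the pair consisting of the size of the relative $H$-basis $HB(\vec a/A)$ and the $\cL$-type $\tp(\vec a\,\vec h/A)$, where $\vec h$ is any enumeration of $HB(\vec a/A)$. Granting this, fix $\lambda=\lambda^{|T|}$ and a set $A$ with $|A|\le\lambda$; replacing $A$ by $\acl_H(A)$ (still of size $\le|A|+|T|\le\lambda$, and restriction of $\cL_H$-types onto this larger set is surjective) we may assume $A=\acl_H(A)$. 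Choosing for each $p\in S_1^{\cL_H}(A)$ a realization $\vec a\models p$ and an enumeration $\vec h$ of $HB(\vec a/A)$, the counting principle makes $p\mapsto\bigl(1+|\vec h|,\ \tp(\vec a\,\vec h/A)\bigr)$ an injection of $S_1^{\cL_H}(A)$ into $\bigcup_{n\ge1}\{n\}\times S_n^{\cL}(A)$, a set of size $\le\aleph_0\cdot\lambda=\lambda$ since $T$ is $\lambda$-stable. Thus $|S_1^{\cL_H}(A)|\le\lambda$, so $T^{ind}_{\mathcal{G}}$ is $\lambda$-stable; as this holds for all $\lambda$ with $\lambda=\lambda^{|T|}$, it is stable, hence superstable.

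To establish the counting principle I would proceed as follows. Suppose $\vec a,\vec a'$ are finite tuples with $\vec h=HB(\vec a/A)$, $\vec h'=HB(\vec a'/A)$, and $\tp(\vec a\,\vec h/A)=\tp(\vec a'\,\vec h'/A)$ as $\cL$-types. The key sub-step is that $\vec a\,A\,\vec h$ and $\vec a'\,A\,\vec h'$ are $H$-independent: this follows by combining $A\ind_{H(A)}H(M)$ (which is what $H$-independence of $A$ means), $\vec a\ind_{A\vec h}H(M)$ (the defining property of the relative $H$-basis, cf. Remark \ref{H-basisrel}), transitivity, and the fact that $H(M)$ is an independent set, from which one sees that no further element of $H$ lies in $\acl(\vec a\,A\,\vec h)$, i.e. $H(\vec a\,A\,\vec h)=H(A)\cup\vec h$. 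Since $\tp(\vec a\,\vec h/A)=\tp(\vec a'\,\vec h'/A)$ is a type over $A$, which contains $H(A)$, the $\cL$-type of each tuple together with an enumeration of its $H$-part coincide for the two tuples, so Proposition \ref{H-indtypes} yields $\tp_H(\vec a\,\vec h/A)=\tp_H(\vec a'\,\vec h'/A)$ and therefore $\tp_H(\vec a/A)=\tp_H(\vec a'/A)$. The only real subtlety — the main obstacle, such as it is — is this bookkeeping around $H$-independence of the enlarged tuples and the fact that $HB(\vec a/A)$ is a finite set that must be given an enumeration before it can be fed to Proposition \ref{H-indtypes}; the rest is cardinal arithmetic. (An alternative route would be to verify stationarity of non-forking over models of $T^{ind}_{\mathcal{G}}$ directly from Theorem \ref{charforking} and Proposition \ref{H-indtypes}, reducing it to stationarity in $T$, and conclude stability that way; the counting argument above seems shorter to write.)
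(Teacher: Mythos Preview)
Your proof is correct, but it takes a different route from the paper's. The paper invokes the Casanovas--Ziegler criterion for stability of expansions by a predicate \cite{CZ}: since the induced structure on $H$ is stable (Proposition \ref{induced} shows only $\cL$-definable sets are induced on $H$) and $T^{ind}_{\mathcal{G}}$ is near model-complete (Lemma \ref{AlmostQE}), that criterion gives stability immediately, and superstability follows from the already-established supersimplicity. Your argument instead counts types directly, using Proposition \ref{H-indtypes} to show that for $A=\acl_H(A)$ the map $\tp_H(\vec a/A)\mapsto \tp(\vec a\,\vec h/A)$ (with $\vec h$ enumerating $HB(\vec a/A)$) is injective, and then piggybacking on $\lambda$-stability of $T$. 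This is more elementary --- it does not cite the external result --- and in fact yields more: your bound $|S_1^{\cL_H}(A)|\le \aleph_0\cdot\sup_n|S_n^{\cL}(A)|$ shows $T^{ind}_{\mathcal{G}}$ is $\lambda$-stable whenever $T$ is, so superstability of $T^{ind}_{\mathcal{G}}$ follows directly from superstability of $T$ without passing through Theorem \ref{simplicity}. The paper's proof is shorter to write; yours is self-contained and gives a sharper transfer of the stability spectrum. One small point of bookkeeping worth making explicit in your write-up: if some coordinate of $\vec a$ lies in $H(M)$, the independence $\vec a\ind_{A\vec h}H(M)$ forces it into $\acl(A\vec h)$ and hence (by independence of $H(M)$ and $H$-independence of $A$) into $H(A)\cup\vec h$, so $H(\vec a A\vec h)=H(A)\cup\vec h$ as required for Proposition \ref{H-indtypes}.
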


\begin{proof}
We already know that then $T_{\mathcal{G}}^{ind}$ is supersimple. To check stability we use
the criterion by Casanovas and Ziegler on expansions by predicates \cite{CZ}. We already 
know by Proposition \ref{induced} that the induced structure on $H$ is stable. We also know that
 $T_{\mathcal{G}}^{ind}$ is near model-complete, so every $\cl_H$-formula is equivalent to a 
 boolean combination of formulas of the form $\exists x_1\in H\dots \exists x_m\in H\varphi(\vec x,\vec y)$, where $\varphi(\vec x,\vec y)$ is an $\cL$-formula. We can conclude that the theory $T_{\mathcal{G}}^{ind}$ is 
 stable, thus also superstable.

\end{proof}

\section{One-basedness} \label{1based-section}

An example of a one-based geometric theory $T$ such that $T^{ind}$ is not one-based was given in 
 \cite{BeVa2}.  We follow the ideas on \cite{Ca} to understand exactly when one-basedness is preserved.

We will focus our study on two ``extreme'' cases, on one hand we will study the case when  $\mathcal{G}= ``x=x"$ in any theory $T$. On the other hand we will consider the case when $T$ is a theory of U-rank $\omega^{\alpha}$ and the U-rank is continuous (see Section \ref{omegacase}), in this case $\mathcal{G}$ is the union of the types of rank $\omega^{\alpha}$.  We will see that, in both cases, one-basedness is preverved in $T^{ind}_{\mathcal{G}}$ whenever $T$ satisfies some kind of ``triviality'' with respect to the types in $\mathcal{G}$.

\subsection{$\mathcal{G}= ``x=x''$}

In this subsection we assume $T$ is supersimple, let $\mathcal{G}= ``x=x''$ and assume that $T^{ind}_{\mathcal{G}}$ is first order.

\begin{defn}
A theory $T$ is {\it trivial} (or {\it trivial for freedom}), if for every set $A$ and every tuples $\vec a$, $\vec b$, and $\vec c$ pairwise independent over $A$, we have that $\vec a\ind_{A} \vec b \vec c$.
\end{defn}

\begin{rem}
The theory of the free pseudoplane (see example \ref{freeps}) is trivial.
\end{rem}

%It is also known that the theory of the pseudoplane eliminates quantifiers modulo the formulas $d_n(x,y)=\exists x_0...%\existsx_n(x=x_0Rx_1R...Rx_n=y)$. It is easy to check then that it has the nfcp.

\begin{lem}[\cite{Go}, Proposition 8]
Assume $T$ is one-based and trivial. If $\vec a\ind_{\vec a_1} \vec b_1$ and $\vec a\ind_{\vec a_2} \vec b_2$, then $\vec a\ind_{\vec a_1\vec a_2} \vec b_1\vec b_2$.
\end{lem}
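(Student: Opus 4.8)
The statement is a known fact about one-based trivial theories (Goode's Proposition 8), so the plan is to reconstruct its short argument using only one-basedness, triviality, and the basic forking calculus (transitivity, symmetry, monotonicity). Fix $A=\emptyset$ for readability (one works over an arbitrary base throughout, or absorbs it into the parameters). The hypotheses give $\vec a\ind_{\vec a_1}\vec b_1$ and $\vec a\ind_{\vec a_2}\vec b_2$, and I want $\vec a\ind_{\vec a_1\vec a_2}\vec b_1\vec b_2$. By symmetry and the left/right versions of monotonicity it suffices to handle $\vec b_1$ and $\vec b_2$ one at a time, i.e. to show first $\vec a\ind_{\vec a_1\vec a_2}\vec b_1$ and then, working over $\vec b_1$, that $\vec a\ind_{\vec a_1\vec a_2\vec b_1}\vec b_2$, and finally chain with transitivity.

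\textbf{Key steps.} The crucial input is the standard characterization of one-basedness: for all tuples $\vec u,\vec v$, $\vec u\ind_{\acl^{eq}(\vec u)\cap\acl^{eq}(\vec v)}\vec v$. First I would use this to upgrade the hypothesis $\vec a\ind_{\vec a_1}\vec b_1$: replacing $\vec a_1$ by $\vec a_1'=\acl^{eq}(\vec a\vec a_1)\cap\acl^{eq}(\vec b_1\vec a_1)$ (a canonical base living in both algebraic closures), one gets that $\vec a\vec a_1\ind_{\vec a_1'}\vec b_1\vec a_1$, hence in particular $\vec a\ind_{\vec a_1'}\vec b_1\vec a_1$, and $\vec a_1'\subset\acl^{eq}(\vec a\vec a_1)$. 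The point of this reduction is that now the "base" $\vec a_1'$ is algebraic over $\vec a\vec a_1$, which is what lets triviality act. Then I would invoke triviality: since $\vec a\ind_{\vec a_1'}\vec b_1$ and, after a symmetric reduction on the second hypothesis, $\vec a\ind_{\vec a_2'}\vec b_2$ with analogous properties, one sets up three tuples pairwise independent over a common base and concludes $\vec a\ind$ from the pair. The bookkeeping is: show $\vec a$, $\vec b_1$, $\vec a_2$ are pairwise independent over $\vec a_1'$ (using that $\vec a_2'$, being algebraic over $\vec a\vec a_2$, interacts correctly), apply triviality to get $\vec a\ind_{\vec a_1'}\vec b_1\vec a_2$, then transitivity with $\vec a_1'\subset\acl^{eq}(\vec a\vec a_1)\subset\acl^{eq}(\vec a\vec a_1\vec a_2)$ gives $\vec a\ind_{\vec a_1\vec a_2}\vec b_1$. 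Run the symmetric argument over $\vec b_1$ to get $\vec a\ind_{\vec a_1\vec a_2\vec b_1}\vec b_2$, and transitivity finishes.

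\textbf{Main obstacle.} The delicate part is the repeated passage to canonical bases inside $\acl^{eq}$ and verifying that triviality genuinely applies — i.e. confirming the three tuples are pairwise independent over exactly the same base, not over slightly different algebraic closures. One has to be careful that enlarging $\vec a_1$ to $\vec a_1'$ does not disturb the second independence statement, and that the canonical base extracted from the one-based hypothesis really lies in both relevant algebraic closures so that triviality's pairwise-independence premise is met verbatim. Once the bases are aligned, the forking calculus (monotonicity, transitivity, symmetry) is routine. I would therefore spend the proof's effort on stating the one-based canonical-base reduction cleanly as a preliminary observation, and then the triviality application and the two transitivity chains are short.
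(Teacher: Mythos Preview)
The paper does not prove this lemma; it is quoted without proof as a citation to Goode. So there is no paper-proof to compare against, and I will simply assess your plan.

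Your plan has a genuine gap at the triviality step. You want to show that $\vec a$, $\vec b_1$, $\vec a_2$ are pairwise independent over $\vec a_1'$ and then invoke the triviality axiom. But only one of the three required independences is available: $\vec a \ind_{\vec a_1'} \vec b_1$ comes from the first hypothesis and one-basedness---and note that since $\vec a \ind_{\vec a_1} \vec b_1$, one-basedness already gives $\vec a_1' = \acl^{eq}(\vec a\vec a_1) \cap \acl^{eq}(\vec b_1\vec a_1) = \acl^{eq}(\vec a_1)$, so your ``reduction'' changes nothing. The hypotheses say nothing about how $\vec a_2$ relates to $\vec a$, $\vec b_1$, or $\vec a_1$, so neither $\vec a \ind_{\vec a_1'} \vec a_2$ nor $\vec b_1 \ind_{\vec a_1'} \vec a_2$ is available, and the pairwise-triviality axiom does not apply to the triple you form. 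The same obstruction recurs in your symmetric second step. Your ``main obstacle'' paragraph flags base-alignment as the delicate point, but the real problem is one level up: the triple is simply not pairwise independent over any base you can manufacture from the hypotheses.

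The route that does work uses one-basedness to place both canonical bases $e_i = Cb(\vec a/\vec a_i\vec b_i)$ inside $\acl^{eq}(\vec a)$; the hypotheses $\vec a \ind_{\vec a_i} \vec b_i$ then force $e_i \subset \acl^{eq}(\vec a_i)$. Because $e_1, e_2 \in \acl^{eq}(\vec a)$ one can move both into the base simultaneously and obtain $\vec a \ind_{e_1 e_2} \vec a_i \vec b_i$ for $i=1,2$; and since $e_1 e_2 \subset \acl^{eq}(\vec a_1 \vec a_2)$ it then suffices to show $\vec a \ind_{e_1 e_2} \vec a_1 \vec b_1 \vec a_2 \vec b_2$. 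This last step is where triviality genuinely enters, but again not through a single pairwise application (there is no reason for $\vec a_1\vec b_1 \ind_{e_1 e_2} \vec a_2 \vec b_2$); Goode's argument combines triviality with one-basedness more carefully here, essentially by analysing $\acl^{eq}(\vec a) \cap \acl^{eq}(\vec a_1\vec b_1\vec a_2\vec b_2)$.
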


\begin{prop}\label{prop1based}
Assume $T$ is a one-based theory and $\mathcal{G}=``x=x"$, then $T_{\mathcal{G}}^{ind}$ is one-based if and only if $T$ is trivial.

\end{prop}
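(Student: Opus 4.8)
The plan is to prove both directions using the characterization of forking in $T^{ind}_{\mathcal{G}}$ from Theorem \ref{charforking}, which (since $\mathcal{G} = ``x=x"$, so $H(M)$ is a maximal independent set and every type is ``large'') says: for $C = \acl_H(C) \subset D = \acl_H(D)$, $\tp_H(\vec a/D)$ does not fork over $C$ iff $HB(\vec a/D) = HB(\vec a/C)$ and $\vec a \ind_{CH(M)} DH(M)$. Recall that one-basedness of a supersimple theory is equivalent to: for all $\vec a$ and all $H$-closed sets $B$, $Cb_H(\vec a/B) \in \acl_H^{eq}(\vec a)$; and by Proposition \ref{cbTind}, $Cb_H(\vec a/B)$ is interalgebraic with $Cb(\vec a\, HB(\vec a/B)/B)$.

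\textbf{The easy direction ($T$ trivial $\Rightarrow T^{ind}_{\mathcal{G}}$ one-based).} First I would take an $H$-independent set $B$ and a tuple $\vec a$, set $h = HB(\vec a/B)$ and $e = Cb(\vec a h / B)$ (in the sense of $T$). By one-basedness of $T$, $e \in \acl^{eq}(\vec a h)$. The goal is to show $e \in \acl_H^{eq}(\vec a)$; by Proposition \ref{cbTind} this gives $Cb_H(\vec a/B) \in \acl_H^{eq}(\vec a)$, which is one-basedness of $T^{ind}_{\mathcal{G}}$. Since $e \in \acl^{eq}(\vec a h)$ and $h \in HB(\vec a/B) \subseteq H(M)$, it suffices to recover $h$ (as a finite set) from $\vec a$ inside $\acl_H^{eq}$ — equivalently, to show $HB(\vec a/\acl_H(\vec a)) = HB(\vec a/B)$, i.e. that the $H$-basis doesn't shrink when we pass from $B$ down to $\acl_H(\vec a)$. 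Here is where triviality enters: if some $h_0 \in h$ were not in $\acl_H(\vec a)$, then $\vec a \ind_{(\acl_H(\vec a))\,(h \setminus h_0)} h_0$ by minimality considerations; combined with the fact that $h$ is an independent set over $\emptyset$ and one-basedness of $T$ (which forces $\vec a \ind_{\acl(\vec a) \cap \acl(h)} h$, a strong ``modularity'' statement), triviality of $T$ should let me conclude $\vec a \ind_{\acl_H(\vec a)} h$, contradicting $h = HB(\vec a/B) \neq \emptyset$ (when it is nonempty). This uses the lemma of Goode (Proposition 8 of \cite{Go}) cited just above to combine the various one-element independence statements about the $h_i \in h$.

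\textbf{The hard direction ($T^{ind}_{\mathcal{G}}$ one-based $\Rightarrow T$ trivial).} I would argue by contraposition: assume $T$ is not trivial, so there exist a set $A$ and tuples $\vec a, \vec b, \vec c$ pairwise independent over $A$ but with $\vec a \nind_A \vec b\vec c$. The plan is to build, inside a saturated $H$-structure, a configuration witnessing non-one-basedness of $T^{ind}_{\mathcal{G}}$. The idea is to move $A$, $\vec b$, $\vec c$ into (or make them interalgebraic with tuples from) $H(M)$: using the density property, realize copies of these tuples by elements of $H$ (one must be careful that the relevant sub-tuples remain independent so the generalized density property of the Lemma after Definition \ref{Hstructures} applies; pairwise-independence over $A$ plus one-basedness of $T$ should give enough independence, possibly after replacing $\vec b, \vec c$ by interalgebraic tuples). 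Once $\vec b, \vec c$ (and $A$) live in $H(M)$, the $H$-basis of $\vec a$ over $B := \acl_H(A\vec b\vec c)$ will include (copies of) $\vec b\vec c$ since $\vec a \nind_A \vec b\vec c$, whereas over smaller $H$-closed subsets it will be strictly smaller; then $Cb(\vec a\, HB(\vec a/B)/B)$ will not be captured in $\acl_H^{eq}(\vec a)$, contradicting one-basedness of $T^{ind}_{\mathcal{G}}$ via Proposition \ref{cbTind}. The main obstacle, and the step I'd spend the most care on, is this last direction: arranging the non-triviality witness so that the relevant tuples can legitimately be absorbed into $H$ (the independence hypotheses of the generalized density property are delicate) and then pinning down precisely which finite subset of $H$ is the $H$-basis over $B$ versus over intermediate sets, so that the canonical base genuinely fails to be algebraic over $\vec a$. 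This is essentially the argument of \cite{Ca}, adapted to the $H$-structure setting, and the bookkeeping of $H$-bases under the various base changes (using Proposition \ref{changinghb}) is the technical heart.
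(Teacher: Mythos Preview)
Your overall strategy is right and you invoke the correct tools (Proposition \ref{cbTind}, Goode's lemma), but both directions have execution problems that the paper's argument avoids.

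\medskip

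\textbf{Easy direction.} Your target --- show $h = HB(\vec a/B) \subset \acl_H(\vec a)$ --- is correct, but the reformulation ``equivalently, $HB(\vec a/\acl_H(\vec a)) = HB(\vec a/B)$'' is wrong: the left side is empty since $\vec a \in \acl_H(\vec a)$. More seriously, the sketch ``if $h_0 \in h$ were not in $\acl_H(\vec a)$, then $\vec a \ind_{\acl_H(\vec a)(h\setminus h_0)} h_0$ by minimality'' does not follow; minimality of $HB(\vec a/B)$ gives the \emph{opposite}, namely $\vec a \nind_{B(h\setminus h_0)} h_0$. The paper's route is cleaner and one line: set $\vec h = HB(\vec a)$ (over $\emptyset$), which is automatically in $\acl_H(\vec a)$; from $\vec a \ind_{\vec h} H$ and the trivial $\vec a \ind_B B$, Goode's lemma yields $\vec a \ind_{\vec h B} BH$, so $HB(\vec a/B) \subset \vec h$. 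Then $Cb(\vec a\, HB(\vec a/B)/B) \subset \acl^{eq}(\vec a\vec h) \subset \acl_H^{eq}(\vec a)$ by one-basedness of $T$.

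\medskip

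\textbf{Hard direction.} Here there is a genuine gap. You propose to move both $\vec b$ and $\vec c$ (and $A$) into $H$ and then compute $HB(\vec a/B)$ for $B = \acl_H(A\vec b\vec c)$. But once $\vec b\vec c \subset B$, they are already in the base, so they contribute nothing to $HB(\vec a/B)$; your claim that this $H$-basis ``will include (copies of) $\vec b\vec c$'' is false, and the argument collapses. The paper proceeds asymmetrically: keep $\vec a$ arbitrary (arranging $\vec a \ind H$ via extension), reduce $b$ and $h$ to singletons, put \emph{only} $h$ into $H$ via density (legitimate since $h \ind \vec a$), and arrange $b \ind_{\vec a h} H$. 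Then $h = HB(b/\vec a)$, and after replacing $\vec a$ by $Cb(bh/\vec a)$ one computes $\acl_H^{eq}(Cb_H(b/\vec a)) = \acl^{eq}(b)\cap\acl^{eq}(\vec a)$ (using that both $b$ and $\vec a$ have empty $H$-basis), whence one-basedness of $T^{ind}_{\mathcal G}$ forces $\vec a \in \acl^{eq}(b)$, a contradiction. The crucial point you miss is that the canonical base must be taken over the $H$-independent tuple $\vec a$, not over a set containing the $H$-elements.
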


\begin{proof}
($\Leftarrow$) Assume $T$ is trivial. Let $\vec a$ be a tuple, $B$ an algebraically closed set in $(M,H)$ and $\vec h=HB(\vec a)$. Since $\vec a\ind_{\vec h} H$ and $\vec a\ind_B B$, by the previous lemma we have that $\vec a\ind_{\vec h B}BH$. Therefore $HB(\vec a/B)\subset \vec h$ and 

\begin{eqnarray*}
\acl^{eq}_H(Cb_H(\vec a/B))&=&\acl^{eq}_H(Cb(\vec a HB(\vec a/B)/B))\\
&\subset & \acl^{eq}_H(Cb(\vec a \vec h/B))\quad \mbox{(because $HB(\vec a/B)\subset \vec h$ )}\\
&\subset & \acl^{eq}_H(\acl^{eq}(\vec a \vec h)\cap \acl^{eq}(B)) \quad \mbox{(because $T$ is one-based)}\\
&=& \acl^{eq}_H(\vec a)\cap \acl^{eq}_H(B).\\
\end{eqnarray*}

Thus, $T^{ind}_{\mathcal{G}}$ is one-based.

($\Rightarrow$) Assume $T^{ind}_{\mathcal{G}}$ is one-based and $T$ is not trivial. Then, there are real tuples $\vec a$, $\vec b$, $\vec h$ and a set of parameters $A$ such that $\vec a$, $\vec b$ and $\vec h$ are pairwise independent but not independent over $A$ (from now on we will work over $A$). By the generalized extension property we can assume that $\vec a \ind H$. Also, we may assume $b$ and $h$ are singletons and, as $h\ind \vec a$, we may assume $h$ belongs to $H$ by the density property.

We may also assume that $ b\ind_{\vec a h} H$ by the extension property.

Recall that $ b\nind_{\vec{a}} h$ and $h$ is a single element, then $h=HB( b/\vec a)$.

If we take $\vec a'=Cb( b h/\vec a)$ then the $\vec a'$, $ b$ and $h$ satisfy all the above properties, so we may take $\vec a=Cb( b h/\vec a)$.

By hypothesis $T^{ind}_{\mathcal{G}}$ is one-based, then $\acl^{eq}_H(Cb_H(b/\vec a))=\acl^{eq}_H( b)\cap \acl^{eq}_H(\vec a)$. Now, $\acl^{eq}_H(\vec a)=\acl^{eq}( \vec a)$ as $\vec a\ind H$. On the other hand, since $\vec a\ind H$  and $ b\ind_{h \vec a}H$, we have that $ b\underset{h}\ind H$. By hypothesis $ b\ind h$ and, by transitivity, $ b\ind H$. Hence $$HB( b)=\emptyset$$ and $$\acl^{eq}_H( b)=\acl^{eq}(b)$$ This means $\acl^{eq}_H(Cb_H( b/\vec a))=\acl^{eq}( b)\cap \acl^{eq}(\vec  a)$.

However, recall that $\vec a=Cb(bh/\vec a)$, therefore 

\begin{eqnarray*}
\acl^{eq}(\vec a)&=&\acl^{eq}_H(\vec a)\\
&=&\acl^{eq}_H(Cb(bh/\vec a))\\
&=&\acl^{eq}_H(Cb_H(b/\vec a))\\
&=&\acl^{eq}(b)\cap \acl^{eq}( \vec a).\\
\end{eqnarray*} 

This implies that $\vec a\subset \acl^{eq}(b)$, which yields a contradiction. 

\end{proof}

By a very well known result of Hrushovski, we now that if $T$ is stable, one-based and not trivial, then $T$ interprets an infinite group. Therefore we have the following corollary:

\begin{cor}
Assume $T$ is stable. Then $T^{ind}_{\mathcal{G}}$ is one-based if and only if $T$ is one-based and does not interpret an infinite group. 
\end{cor}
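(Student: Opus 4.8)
The plan is to derive the corollary from Proposition \ref{prop1based} together with the classical dichotomy for one-based stable theories. Recall that Proposition \ref{prop1based} says that for $\mathcal{G}= ``x=x"$ and $T$ one-based, $T^{ind}_{\mathcal{G}}$ is one-based if and only if $T$ is trivial. So the work reduces to matching ``$T$ one-based and trivial'' with ``$T$ one-based and does not interpret an infinite group.''

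The key input is Hrushovski's theorem (as stated just before the corollary): if $T$ is stable, one-based and \emph{not} trivial, then $T$ interprets an infinite group. First I would argue the direction ($\Leftarrow$): suppose $T$ is one-based and does not interpret an infinite group. Then by the contrapositive of Hrushovski's result, $T$ cannot be non-trivial, i.e. $T$ is trivial; hence by Proposition \ref{prop1based}, $T^{ind}_{\mathcal{G}}$ is one-based. For the direction ($\Rightarrow$): suppose $T^{ind}_{\mathcal{G}}$ is one-based. Note first that $T$ must itself be one-based, since $T$ is a reduct of $T^{ind}_{\mathcal{G}}$ and one-basedness passes to reducts (canonical bases in the reduct are contained in those computed in the richer theory). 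Then Proposition \ref{prop1based} applies and gives that $T$ is trivial; and a trivial theory cannot interpret an infinite group (an infinite interpretable group immediately violates triviality, taking three independent generic-translate configurations, or simply because a group is never trivial for freedom), so $T$ does not interpret an infinite group.

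The only mild subtlety — and the closest thing to an obstacle — is the remark that one-basedness of $T^{ind}_{\mathcal{G}}$ forces one-basedness of $T$. This is standard: working in $T^{eq}$ and $(T^{ind}_{\mathcal{G}})^{eq}$, for $\cL$-definable data the $\cL$-canonical base is contained in the $\cL_H$-canonical base, and $\acl \subseteq \acl_H$, so the one-basedness inequality for $T^{ind}_{\mathcal{G}}$ restricts to the corresponding inequality for $T$. Alternatively, one can invoke Proposition \ref{prop1based} only after separately checking that $T$ one-based is a necessary hypothesis, which is immediate from this reduct observation. Everything else is a direct citation of Hrushovski's dichotomy and the already-proved Proposition \ref{prop1based}, so no further computation is needed.

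\begin{proof}
Since $T$ is a reduct of $T^{ind}_{\mathcal{G}}$ and one-basedness is inherited by reducts, if $T^{ind}_{\mathcal{G}}$ is one-based then so is $T$. Conversely, if $T$ is one-based we may apply Proposition \ref{prop1based}, which states that $T^{ind}_{\mathcal{G}}$ is one-based if and only if $T$ is trivial. Thus it suffices to show that, for $T$ stable and one-based, triviality of $T$ is equivalent to $T$ not interpreting an infinite group. If $T$ interprets an infinite group $G$, then choosing independent generics witnesses a failure of triviality, so $T$ is not trivial. If $T$ is not trivial, then by the cited result of Hrushovski, the stable one-based theory $T$ interprets an infinite group. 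Combining these equivalences gives the statement.
\end{proof}
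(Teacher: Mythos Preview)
Your overall strategy matches the paper's: the corollary is presented there without proof, as an immediate consequence of Proposition~\ref{prop1based} together with Hrushovski's theorem. Your write-up is more explicit than the paper's, and the reduction to ``trivial $\Leftrightarrow$ no infinite interpretable group'' for stable one-based $T$ is exactly the intended argument.

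There is, however, a genuine gap in your justification that $T^{ind}_{\mathcal{G}}$ one-based implies $T$ one-based. The assertion ``one-basedness is inherited by reducts'' is not a general fact, and your supporting argument does not go through: you observe $\acl\subseteq\acl_H$, but that inclusion points the wrong way. Even if one grants that the $\cL$-canonical base sits inside $\acl_H^{eq}$ of the $\cL_H$-canonical base, one-basedness of $T^{ind}_{\mathcal{G}}$ only gives $Cb(\vec a/B)\subseteq\acl_H^{eq}(\vec a)$, which may be strictly larger than $\acl^{eq}(\vec a)$ (precisely by the $H$-basis). The correct argument is the one used in the forward direction of Theorem~\ref{amplethm}, specialized to $n=1$: if $a_0,a_1$ witness $1$-ampleness of $T$, use the generalized extension property to arrange $a_0a_1\ind H(M)$; then all relevant $H$-bases are empty, $\acl_H^{eq}(a_i)=\acl^{eq}(a_i)$, and Theorem~\ref{charforking} (or Proposition~\ref{cbTind}) shows the same pair witnesses $1$-ampleness of $T^{ind}_{\mathcal{G}}$. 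With this fix, your proof is complete and in line with the paper.
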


\subsection{Theories of rank $\omega^{\alpha}$} In  this subsection we will assume that $T$ is a theory of SU-rank $\omega^{\alpha}$ in which the $SU$-rank is continuous, we take $\mathcal{G}$ to be the union of all the types of rank $\omega^{\alpha}$ and we also assume that $T^{ind}_{\mathcal{G}}$ is first order.

We can define a closure operator as $\ccl(A)=\{x|SU(x/A)<\omega^{\alpha}\}$. It is not hard to check that $\ccl$ defines a pregeometry (monotonicity comes from the Lascar inequality).

\begin{defn}

A pregeometry $(X,cl)$ is {\it trivial} if for every $A\subset X$, $$cl(A)=\bigcup_{a\in A} cl(a).$$  
\end{defn}

Notice that if $G$ is a group of $SU$-rank $\omega^{\alpha}$ then $\ccl$ is not trivial (take $a\ind b$ both of rank $\omega$ and $c=a+b$, then $c\in \ccl(a,b)\setminus \ccl(a)\cup \ccl(b)$). 

\begin{rem}
In the theory of the free pseudoplane (see example \ref{freeps} ) the pregeometry generated by $\ccl$ is trivial: for $A$ algebraically closed and $a$ a single element, $SU(a/A)= d(a,A)$ where $d(a,A)$ is the minimum length of a path from $a$ to an element of $A$ (or $\omega$ if there is no path). If $b\in \ccl(A)$ it means that there is a path to some element $a\in A$ so $\ccl(A)=\bigcup_{a\in A}\ccl(a)$.
\end{rem}

%It is also known that the theory of the pseudoplane eliminates quantifiers modulo the formulas $d_n(x,y)=\exists x_0...%\existsx_n(x=x_0Rx_1R...Rx_n=y)$. It is easy to check then that it has the nfcp.

We will now prove that one-basedness is only preserved in $T^{ind}_{\mathcal{G}}$ when the pregeometry $\ccl$ is trivial. It is worth to notice that, unlike the $SU$-rank 1 case, the triviality of $\ccl$ does not imply that $T$ is one-based. In fact, the theory of the free pseudoplane is the canonical example of a CM-trivial theory which is not one-based. This is the reason why the statement of the following proposition is a little bit different from  the one from \cite{Ca}.

\begin{lem}
If $\ccl$ is trivial in $T$ then for every $\vec a$ and for every $B=\acl_H(B)$, $$HB(\vec a/B)\subset HB(\vec a).$$
\end{lem}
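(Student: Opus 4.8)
The plan is to show that every $h \in HB(\vec a / B)$ already appears in $HB(\vec a)$, using the triviality of $\ccl$ to control the minimal set of elements of $H$ over which $\vec a$ becomes independent from $H(M)$. First I would set $h_0 = HB(\vec a)$, so that $\vec a \ind_{h_0} H(M)$, and let $h \in HB(\vec a / B)$; the goal is to prove $h \in h_0$. Since $B$ is $\acl_H$-closed, it is $H$-independent, so $H(B) \subseteq B$ and $h \in H(M)$. The key will be to analyze the position of $h$ relative to $\vec a$, $B$, and $h_0$ via the $\ccl$-closure: because $h \in HB(\vec a/B)$ is part of a \emph{minimal} set, $h$ genuinely forks with $\vec a$ over $B \cup (HB(\vec a/B) \setminus \{h\})$, which should force $h \in \ccl(\vec a \cup B \cup h_0)$ in the sense of $T$ (otherwise $h$ would be generic over everything relevant and could be dropped from the $H$-basis).

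The crucial step is to invoke triviality of $\ccl$. Once we know $h \in \ccl(\vec a\, B\, h_0)$ and $h \in \mathcal{G}$ (so $SU(h) = \omega^\alpha$, i.e.\ $h \notin \ccl(\emptyset)$), triviality gives $\ccl(\vec a\, B\, h_0) = \bigcup \ccl(d)$ over the individual elements $d$ of $\vec a\, B\, h_0$, so $h$ lies in $\ccl$ of a single such element $d$. Since $h$ is generic ($SU(h)=\omega^\alpha$), the only way $h \in \ccl(d)$ can happen without $h \in \ccl(\emptyset)$ is if $d$ is itself $\ccl$-interalgebraic with $h$ (this uses that $\ccl$-rank is bounded by $\omega^\alpha$ and continuity of rank, so $h \in \ccl(d)$ and $SU(h)=\omega^\alpha$ forces $SU(d) \geq \omega^\alpha$ and then $d \in \ccl(h)$). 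Now $d$ cannot be a coordinate of $\vec a$ with $\vec a \ind_{h_0} H(M)$ unless $d$ is already $\ccl$-dependent on $h_0 \subseteq H(M)$; and $d$ cannot be a coordinate of $B$ since $h \notin \ccl(B)$ would be needed (as $h$ forks with $\vec a$ over $B$, not over $\emptyset$) — so the only remaining possibility is $d \in h_0$, giving $h \in \ccl(h_0) \cap H(M) = h_0$ by $H$-independence of $h_0$.

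The main obstacle I anticipate is making precise the claim that membership in the minimal $H$-basis $HB(\vec a/B)$ forces $h \in \ccl(\vec a\, B\, h_0)$: this requires carefully using that $\vec a \ind_{h_0} H(M)$ together with the exchange/transitivity properties of forking to show that any $h \in H(M)$ which is $\ccl$-generic over $\vec a\, B\, h_0$ can be removed from the $H$-basis. I would argue by contraposition: if $h \notin \ccl(\vec a\, B\, (HB(\vec a/B)\setminus\{h\}))$, then $h$ is large (generic) over $\vec a\, B\, (HB(\vec a/B)\setminus\{h\})$, so $\vec a \ind_{B\,(HB(\vec a/B)\setminus\{h\})} h$ together with $\vec a \ind_{B\, HB(\vec a/B)} H(M)$ would let us drop $h$, contradicting minimality; this is essentially a rank computation using continuity of the $SU$-rank and the Lascar inequalities, analogous to the argument already used in Proposition~\ref{H-basis} but refined to track $\ccl$-genericity rather than mere forking. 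Once that reduction is in place, the triviality argument closes everything quickly.
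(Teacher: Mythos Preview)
Your approach is essentially the paper's, but there is a genuine slip in the middle. You assert $h \in \ccl(\vec a\, B\, h_0)$ with $h_0 = HB(\vec a)$, yet the contrapositive argument you actually give only establishes $h \in \ccl\bigl(\vec a\, B\, (HB(\vec a/B)\setminus\{h\})\bigr)$: minimality of $HB(\vec a/B)$ says nothing about $h_0$. Swapping $HB(\vec a/B)\setminus\{h\}$ for $h_0$ at that point is circular, since $h\in\ccl(\vec a B h_0)$ together with $\vec a\ind_{h_0} H$ already forces $h\in\ccl(Bh_0)$, which (as $h\ind B h_0$ when $h\notin h_0$) is exactly the conclusion you want.

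Once you apply triviality to the correct set $\vec a\, B\,(HB(\vec a/B)\setminus\{h\})$, the case analysis changes slightly: $d\in HB(\vec a/B)\setminus\{h\}$ is excluded by independence of $H$, $d\in B$ by $h\notin\ccl(B)$, so $d=a_j$ is a coordinate of $\vec a$. Here your sketch also stops short: you correctly note that $\vec a\ind_{h_0} H(M)$ and $a_j\nind h$ force $a_j\in\ccl(h_0)$, but then write ``the only remaining possibility is $d\in h_0$'', which does not follow --- $a_j$ need not literally lie in $h_0$. What you want is rather $h\in\ccl(a_j)\subset\ccl(h_0)$, and then $h\in\ccl(h_0)\cap H(M)=h_0$ by a second use of triviality together with $H$-independence. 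The paper organizes this endgame a little differently: having found $h\in\ccl(a_j)$ and $a_j\in\ccl(h)$, it argues directly that $a_j\ind_{h} H(M)$ (the type $\tp(a_j/h)$ has rank $<\omega^\alpha$, hence is orthogonal to the remaining generics in $H$), so $\{h\}=HB(a_j)\subset HB(\vec a)$. Both routes are the same idea; the paper's version simply avoids introducing $h_0$ at the outset and thereby sidesteps the circularity.
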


\begin{proof}
Let $h=HB(\vec a/B)=\{h_i|i\in I\}$. By minimality of H-bases, for every $i \in I$ we have $\vec a \nind_{Bh\setminus h_i} h_i$, then $h_i\in \ccl (\vec a Bh\setminus h_i)$.  As $B$ is $H$-independent and $h_i\notin B$ then $h_i\ind Bh\setminus h_i$, hence $h_i\notin \ccl (Bh\setminus h_i)$.  By triviality it means that $h_i\in \ccl(a_i)$ for some $a_i\in \vec a$. By exchange property $a_i\in \ccl(h_i)$, this implies $a_i\nind h_i$ and $a_i\ind_{h_i} H$ because $tp(a_i/h_i)$ is orthogonal to $H$, and $H$ consists of independent elements. We conclude that $h_i=HB(a_i)$ and $$HB(\vec a/B)=\{h_i|i\in I\}= \bigcup_{a_i\in A} HB(a_i)\subset HB(\vec a)$$.
\end{proof}

\begin{prop}\label{prop1-based}
Assume $T$ is one-based, then $T_{\mathcal{G}}^{ind}$ is one-based if and only if $\ccl$ is trivial in $T$.

\end{prop}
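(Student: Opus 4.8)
The plan is to mirror the structure of the proof of Proposition \ref{prop1based}, using the triviality lemma just established (that $HB(\vec a/B)\subset HB(\vec a)$ when $\ccl$ is trivial) in place of the corresponding fact in the $\mathcal{G}=``x=x"$ case. For the $(\Leftarrow)$ direction, I would assume $\ccl$ is trivial and show $T^{ind}_{\mathcal{G}}$ is one-based. Fix a tuple $\vec a$ and an $\acl_H$-closed set $B$, and let $\vec h=HB(\vec a)$. By the previous lemma, $HB(\vec a/B)\subset HB(\vec a)=\vec h$, so by Proposition \ref{cbTind}, $\acl^{eq}_H(Cb_H(\vec a/B))=\acl^{eq}_H(Cb(\vec a\,HB(\vec a/B)/B))\subset \acl^{eq}_H(Cb(\vec a\vec h/B))$. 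Since $T$ is one-based, $Cb(\vec a\vec h/B)\in\acl^{eq}(\vec a\vec h)\cap\acl^{eq}(B)$, and passing through $\acl^{eq}_H$ and using $\acl^{eq}_H(\vec a\vec h)=\acl^{eq}_H(\vec a)$ (since $\vec h=HB(\vec a)$), we get $\acl^{eq}_H(Cb_H(\vec a/B))\subset \acl^{eq}_H(\vec a)\cap\acl^{eq}_H(B)$, which is one-basedness. This runs almost verbatim as in Proposition \ref{prop1based}, with the trivial-pregeometry lemma doing the job that triviality-for-freedom did there.

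For the $(\Rightarrow)$ direction, I would argue contrapositively: assume $\ccl$ is not trivial and produce a witness to non-one-basedness in $T^{ind}_{\mathcal{G}}$. Non-triviality of $\ccl$ gives a set $A$ and an element $c\in\ccl(A)\setminus\bigcup_{a\in A}\ccl(a)$; by finite character one can take $A=\{a_1,a_2\}$ with $a_1,a_2$ each of rank $\omega^\alpha$, $c$ of rank $<\omega^\alpha$ over $a_1a_2$ but rank $\omega^\alpha$ over each $a_i$ separately — so $c,a_1,a_2$ realize a configuration that is "pairwise $\ccl$-independent but not $\ccl$-independent." One then wants to arrange, after applying the extension and density properties, that $a_1,a_2$ are realized in $H$ (they have rank $\omega^\alpha$, i.e. satisfy $\mathcal{G}$) while the relevant object plays the role of $\vec a$ in the definition of one-basedness, and use Proposition \ref{cbTind} to compute $Cb_H$ and land in a contradiction of exactly the shape found at the end of Proposition \ref{prop1based}: that some tuple $\vec a$ would have to lie in $\acl^{eq}$ of a proper part of itself. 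The extra care, relative to the $x=x$ case, is that the elements being put into $H$ must genuinely satisfy $\mathcal{G}$, and that one must control $HB$ of the constructed tuples using Theorem \ref{charforking} and Corollary \ref{aclp2}.

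I expect the main obstacle to be the $(\Rightarrow)$ direction: getting from a bare failure of triviality of $\ccl$ to a clean finite configuration over a suitable base, with the right elements realizing $\mathcal{G}$ and the right independence and $H$-independence relations holding, so that Proposition \ref{cbTind} can be applied and a contradiction extracted. In particular one must be careful that the "small" element $c$ (rank $<\omega^\alpha$) and the "large" elements $a_i$ interact correctly with the $H$-basis machinery — e.g. that $HB(c/a_1a_2)$ is exactly $\{a_1,a_2\}$ or the appropriate subset, which requires checking that $\tp(c/a_i)$ is large (orthogonal behavior with respect to $H$) so each $a_i$ is forced into the $H$-basis, just as the single element $h$ was forced to be $HB(b/\vec a)$ in the earlier proof. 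Once that configuration is in place, the final algebraic-closure computation and the contradiction are routine, following \cite{Ca} and Proposition \ref{prop1based}.
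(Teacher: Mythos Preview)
Your $(\Leftarrow)$ direction is fine and matches the paper's argument essentially verbatim: set $\vec h=HB(\vec a/B)$ (or $HB(\vec a)$, as you do --- either works once you have the lemma $HB(\vec a/B)\subset HB(\vec a)$), invoke Proposition~\ref{cbTind} and one-basedness of $T$, and conclude $Cb_H(\vec a/B)\subset\acl^{eq}_H(\vec a)$.

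Your $(\Rightarrow)$ direction, however, has the configuration wrong, and the sketch as written does not go through. Two concrete problems. First, the reduction to $|A|=2$ is unjustified: finite character gives you a finite $A$, not a two-element one, and nothing forces the witness to non-triviality to live on a line. The paper does not attempt this reduction; instead it writes the minimal witnessing set as $\vec a h$ with $\vec a$ a (possibly long) $\ccl$-independent tuple and $h$ a \emph{single} element, and only puts $h$ into $H$. Second, and more seriously, your plan to put \emph{both} $a_1,a_2$ into $H$ leaves you with no base set against which to test one-basedness, and your expectation that $HB(c/a_1a_2)=\{a_1,a_2\}$ is false: if $a_1,a_2\in H$ then they are already in the base, and since $c\in\ccl(a_1a_2)$ the type $\tp(c/a_1a_2)$ has rank $<\omega^\alpha$, hence is orthogonal to $H$, so $c\ind_{a_1a_2}H$ and $HB(c/a_1a_2)=\emptyset$.

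The paper's approach is this: from non-triviality extract $b\in\ccl(\vec a h)\setminus(\ccl(\vec a)\cup\ccl(h))$ with $\vec a$ a $\ccl$-independent tuple of \emph{minimal length} with this property; arrange $\vec a\ind H$ and $h\in H$. Then $HB(b/\vec a)=\{h\}$ (since $b\nind_{\vec a}h$ and $b\ind_{\vec a h}H$ by orthogonality), while $HB(b)=\emptyset$ and $HB(\vec a)=\emptyset$, so $\acl^{eq}_H(b)=\acl^{eq}(b)$ and $\acl^{eq}_H(\vec a)=\acl^{eq}(\vec a)$. One-basedness of $T^{ind}_{\mathcal G}$ then forces $\acl^{eq}_H(Cb(bh/\vec a))\subset\acl^{eq}(b)\cap\acl^{eq}(\vec a)$, whence $b\notin\ccl(Cb(bh/\vec a))$. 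On the other hand $bh\ind_{Cb(bh/\vec a)}\vec a$ gives $b\in\ccl(h\,Cb(bh/\vec a))$; taking a maximal $\ccl$-independent $\vec d\subset\ccl(Cb(bh/\vec a))$ reproduces the original configuration, so minimality of $|\vec a|$ forces $\ccl(\vec a)=\ccl(Cb(bh/\vec a))\subset\ccl(b)$, and then $h\in\ccl(\vec a b)\subset\ccl(b)$, contradicting $b\notin\ccl(h)$. The key ingredients you are missing are: put only one element into $H$, keep $\vec a$ as the base, and use minimality of the length of $\vec a$ to close the argument.
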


\begin{proof}%[Proof of Proposition \ref{prop1-based}]
($\Leftarrow$) Assume $\ccl$ is trivial, let $\vec a$ be a tuple, $B$ an algebraic closed set in $(M,H)$ and $\vec h=HB(\vec a/B)$.  By the characterization of canonical bases, $\acle_H(Cb_H(\vec a/B))=\acle_H(Cb(\vec a \vec h/B))$, as $T$ is one-based, $Cb(\vec a\vec h/B)\subset \acle(\vec a\vec h)$. By the previous lemma, $\vec h\subset HB(\vec a)$, and, thus,  $Cb_H(\vec a/B)\subset \acle_H(\vec a HB(\vec a))=\acle_H(\vec a)$, i.e. $T^{ind}_{\mathcal{G}}$ is one-based.

($\Rightarrow$) Assume $T^{ind}_{\mathcal{G}}$ is one-based and $\ccl$ is not trivial,  then there are a tuple $\vec a$ and elements $b$ and $h$ such that $b\in \ccl(\vec a h)$ and $b\notin \ccl(\vec a)\cup \ccl(h)$. Since $b\notin \ccl(\vec a)$ then $h\notin\ccl(\vec a)$, therefore $$SU(b)=SU(h)=\omega^{\alpha}.$$  We can take $\vec a$ a $\ccl$ independent tuple minimal with this property and, by the generalized extension property, we may assume that $\vec a\ind H$ . Moreover, as $h\notin \ccl(\vec a)$, we may assume also that $h$ belongs to $H$ by  the density property.

As $b \in \ccl(\vec a h)$, we have that $tp(b/\vec a h)$ is orthogonal to $H$, therefore $b\ind_{h\vec a}H$. Recall that $b\nind_{\vec a}h$ and $h$ is a single element, and, thus, $h=HB(b/\vec a)$.  By hypothesis $T^{ind}_{\mathcal{G}}$ is one-based, then $\acle_H(Cb_H(b/\vec a))=\acle_H(b)\cap \acle_H(\vec a)$. Now, $\acle_H(\vec a)=\acle(\vec a)$ as $\vec a\ind H$. On the other hand, as $\vec a\ind H$,  and $b\ind_{h \vec a}H$ we have $b\underset{h}\ind H$. By hypothesis $b\notin \ccl(h)$, hence $b\ind h$ (recall that $b$ is a single element) and by transitivity $b\ind H$. So $HB(b)=\emptyset$ and $\acle_H(b)=\acle(b)$. This means $\acle_H(Cb_H(b/\vec a))=\acle(b)\cap \acle(\vec a)$. But $\acle_H(Cb_H(b/\vec a))=\acle_H(Cb(bh/\vec a))$, so, in particular, we have that $$b\notin \ccl(Cb(bh/\vec a)).$$

Now, since $bh\ind_{Cb(bh/\vec a)}a,$
we have that $b\ind_{hCb(bh/\vec a)} h\vec a,$ therefore $$b\in \ccl(hCb(bh/\vec a)),$$ so any maximal $\ccl$-independent subset $\vec d$ of $\ccl(Cb(bh/\vec a))$ satisfies that $$b\in \ccl(\vec d h)$$ and $$b\notin \ccl(\vec d)\cup \ccl(h).$$
The minimality of the length of $\vec a$ yields $\ccl(Cb(bh/\vec a))=\ccl(\vec a)$, hence $\ccl(\vec a)=\ccl(\acle(\vec a)\cap \acle (b))\subset \ccl(\vec a)\cap \ccl(b)$, then $\vec a\in \ccl(b)$ and 
$h\in \ccl(\vec a b)\subset \ccl(b)$. This is a contradiction. 
\end{proof}

\section{Ampleness} \label{Ampleness-section}

The notion of ampleness, defined by Pillay, captures forking complexity. He proved in \cite{Pi} that a theory $T$ is one-based if and only if is not 1-ample, a theory $T$ is CM-trivial if and only if is not $2$-ample. Moreover if $T$ interprets a field then it is $n$-ample for every $n$.
We know that non-$1$-ampleness (one-basedness) is not always preserved in $T^{ind}_{\mathcal{G}}$. In contrast, we show in this section that, for $n\geq 2$, non-$n$-ampleness is preserved in the expansion.

Note that, in contrast with the previous section, we do not require any conditions on $\mathcal{G}$.

\begin{defn}
A supersimple theory $T$ is {\it $n$-ample} if  (possibly after naming some parameters) there exist tuples $a_0,...,a_n$ in $M$  satisfying the following conditions: \\

For all $1\leq i\leq n-1$.\\

(1) $a_{i+1}\underset{a_i}\ind a_{i-1}...a_0$,

(2) $ \acl^{eq} (a_0...a_{i-1}a_{i+1})\cap \acl^{eq}(a_0...a_{i-1}a_{i})=\acl^{eq} (a_{0}...a_{i-1})$.
\\

(3) $a_n\underset{\acl^{eq}(a_1)\cap \acl^{eq}(a_0)}\nind a_0$.

The tuple $a_0,...,a_n$ is $n$-ample if it satisfies the above properties.

\end{defn}

Following \cite{Ca} we prove that, for $n\geq 2$, non-$n$-ampleness is preserved in $T^{ind}_{\mathcal{G}}$. First we need the following lemmas.

\begin{lem}[\cite{Pi1} Fact 2.4 ]
If $A$, $B$, $C$ and $D$ are algebraically closed such that $A\cap B=C$,  and $AB\ind_C D$, then $\acl^{eq}(AD)\cap \acl^{eq}(BD)=\acl^{eq}(CD)$.
\end{lem}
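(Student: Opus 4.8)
The easy inclusion $\acl^{eq}(CD)\subseteq\acl^{eq}(AD)\cap\acl^{eq}(BD)$ is immediate: since $C=A\cap B$ lies in both $A$ and $B$, we have $CD\subseteq AD$ and $CD\subseteq BD$. For the reverse inclusion I would first reduce to the case $C\subseteq D$. Replacing $D$ by $\acl^{eq}(CD)$ changes none of $\acl^{eq}(AD)$, $\acl^{eq}(BD)$, $\acl^{eq}(CD)$ (because $C\subseteq A$, $C\subseteq B$, and $\acl^{eq}(CD)$ is already algebraically closed) and it preserves the hypothesis $AB\ind_C D$. So I may assume $C\subseteq D$, and the claim becomes $\acl^{eq}(AD)\cap\acl^{eq}(BD)=D$.

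Now take $e\in\acl^{eq}(AD)\cap\acl^{eq}(BD)$. Since $e\in\acl^{eq}(AD)$, the type $\tp(e/ABD)$ is algebraic and does not fork over $AD$; symmetrically it does not fork over $BD$. It suffices to show it does not fork over $D$, for then $SU(e/D)=SU(e/ABD)=0$, i.e.\ $e\in\acl^{eq}(D)=D$. Were $A$ and $B$ independent over $D$ this would follow at once from the Independence Theorem; in general they are not, but from $AB\ind_C D$ together with base monotonicity and symmetry one gets $D\ind_A B$ and $D\ind_B A$, hence $\acl^{eq}(AD)\ind_A B$ and $\acl^{eq}(BD)\ind_B A$, so $e\ind_A B$ and $e\ind_B A$ (in fact $De\ind_A B$ and $De\ind_B A$, by transitivity with $e\ind_{AD}B$ and $e\ind_{BD}A$).

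The plan for the remaining step is an amalgamation argument: take a copy $(A',B',e')$ of $(A,B,e)$ realizing the same Lascar strong type over $D$ with $A'B'e'\ind_D ABe$; then $A'\cap B'=C$, the union $ABA'B'$ is still independent from $D$ over $C$, and $A\ind_D B'$. Feeding the strong types of $e$ over $AD$ and of a suitable $D$-conjugate of $e$ over $B'D$ — which agree over $D$ — into the Independence Theorem over an appropriate algebraically closed extension of $D$ should yield that $\tp(e/ABD)$ does not fork over $D$, which is what is wanted. The detailed bookkeeping is carried out in \cite{Pi1}.

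The crux, and the step I expect to be the main obstacle, is precisely this amalgamation. Because $A$ and $B$ may genuinely fail to be independent over $C$, the familiar fact ``$X\ind_Z Y$ with $Z$ algebraically closed $\Rightarrow\acl^{eq}(ZX)\cap\acl^{eq}(ZY)=Z$'' cannot be applied directly over $C$; the hypothesis $AB\ind_C D$ — strictly stronger than $A\ind_C D$ together with $B\ind_C D$ — is exactly what is needed to guarantee that whatever dependence holds between $A$ and $B$ already lives over $C$ and therefore cannot be transported through $D$ into $\acl^{eq}(AD)\cap\acl^{eq}(BD)$.
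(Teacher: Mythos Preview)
Your reduction to $C\subseteq D$ and the derivation of $De\ind_A B$ and $De\ind_B A$ are correct, and in fact these are exactly the opening moves of the paper's argument. The gap is what comes next: you propose an amalgamation via the Independence Theorem but do not carry it out, and the sketch does not close. The Independence Theorem needs the two parameter sets to be independent over the base $D$; you attempt to manufacture this by passing to an independent copy $(A',B',e')$ of $(A,B,e)$ over $D$, but there is then no control over $\tp(e/B'D)$ --- $B'$ has the same type as $B$ over $D$, not over $De$ --- so whatever amalgam you produce says nothing about $\tp(e/ABD)$. Deferring the ``detailed bookkeeping'' to \cite{Pi1} is not a proof.

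The missing idea, and the paper's entire argument, is to use canonical bases. You already have $De\ind_A AB$ and $De\ind_B AB$; hence $Cb(De/AB)\subseteq A$ and $Cb(De/AB)\subseteq B$, so $Cb(De/AB)\subseteq A\cap B=C$. This gives $De\ind_C AB$, whence $e\ind_{CD}AB$; since $e\in\acl^{eq}(BD)\subseteq\acl^{eq}(ABCD)$, it follows that $e\in\acl^{eq}(CD)$. The point is to compute the canonical base \emph{over} $AB$ rather than over $D$: this is exactly where the hypothesis $A\cap B=C$ bites, and no amalgamation is needed.
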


\begin{proof}
Let $c\in \acl^{eq}(AD)\cap \acl^{eq}(BD)$. Since $D\ind_C AB$ then $AD\ind_A AB$ and $cD\ind_A AB$. Therefore $$Cb(cD/AB)\subset A.$$ In the same way we obtain that $$Cb(cD/AB)\subset B.$$ Therefore  $$Cb(cD/AB)\subset A\cap B=C,$$ so $$cD\ind_{C} AB.$$  This implies that $c\in \acl^{eq}(DC)$.
\end{proof}

\begin{lem}
A theory is $n$-ample if and only if  there exist tuples $a_0,...,a_n$ in $M^{eq}$  satisfying the following conditions:

(1) $a_{i+1}\underset{a_i}\ind a_{i-1}...a_0$ for all $ i \leq n-1$,

(2) $ \acl^{eq} (a_{i-1}a_{i+1})\cap \acl^{eq}(a_{i-1}a_{i})=\acl^{eq} (a_{i-1})$ for all $i\leq n-2$.
\\

(3) $a_n\underset{\acl^{eq}(a_1)\cap \acl^{eq}(a_0)}\nind a_0$.

\end{lem}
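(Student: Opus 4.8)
The plan is to prove the two implications by translating between the \emph{global} form of condition~(2) in the definition of $n$-ampleness, where the intersection is taken over the whole initial segment $a_0\dots a_{i-1}$, and the \emph{local} form appearing in the statement, where the base is just $a_{i-1}$; the bridge in both directions is the preceding lemma (Pillay's Fact~2.4 from \cite{Pi1}) together with the independences supplied by condition~(1). As a first step in either direction I would replace each $a_i$ by $\acl^{eq}(a_i)$, which alters none of the three conditions, so that every tuple in sight may be assumed algebraically closed.

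For the forward implication I would start from $a_0,\dots,a_n$ witnessing $n$-ampleness in the sense of the definition. Conditions~(1) and~(3) of the statement are verbatim conditions~(1) and~(3) of the definition, so only the local form of~(2) has to be produced. Fix $i$ with $1\le i\le n-2$ and take $Z\in\acl^{eq}(a_{i-1}a_{i+1})\cap\acl^{eq}(a_{i-1}a_i)$. Since $\acl^{eq}(a_{i-1}a_{i+1})\subseteq\acl^{eq}(a_0\dots a_{i-1}a_{i+1})$ and $\acl^{eq}(a_{i-1}a_i)\subseteq\acl^{eq}(a_0\dots a_{i-1}a_i)$, the global form of~(2) puts $Z$ in $\acl^{eq}(a_0\dots a_{i-1})$. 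Condition~(1) at index $i-1$ gives $a_i\ind_{a_{i-1}}a_0\dots a_{i-2}$, so $\acl^{eq}(a_{i-1}a_i)$ and $\acl^{eq}(a_0\dots a_{i-1})$ are independent over $\acl^{eq}(a_{i-1})$, whence their intersection is $\acl^{eq}(a_{i-1})$ and $Z\in\acl^{eq}(a_{i-1})$; the reverse inclusion is immediate. (For $i=1$ the two forms already coincide.)

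For the converse I would start from $a_0,\dots,a_n$ satisfying conditions~(1),~(2),~(3) of the statement and check that they witness $n$-ampleness, for which it remains to recover the global form of~(2). Fix $i$ with $1\le i\le n-2$. From condition~(1) at index $i$, namely $a_{i+1}\ind_{a_i}a_0\dots a_{i-1}$, monotonicity gives $a_{i+1}\ind_{a_{i-1}a_i}a_0\dots a_{i-2}$; combining with condition~(1) at index $i-1$, namely $a_i\ind_{a_{i-1}}a_0\dots a_{i-2}$, transitivity yields $a_ia_{i+1}\ind_{a_{i-1}}a_0\dots a_{i-2}$. Put $A=\acl^{eq}(a_{i-1}a_{i+1})$, $B=\acl^{eq}(a_{i-1}a_i)$, $C=\acl^{eq}(a_{i-1})$ and $D=\acl^{eq}(a_0\dots a_{i-2})$. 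Condition~(2) of the statement is precisely $A\cap B=C$, and $AB\subseteq\acl^{eq}(a_{i-1}a_ia_{i+1})$ is independent from $D$ over $C$ by the previous sentence, so the preceding lemma gives $\acl^{eq}(AD)\cap\acl^{eq}(BD)=\acl^{eq}(CD)$ --- exactly the global form of~(2) at index $i$. (For $i=1$ there is no prefix and nothing is needed.)

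The step I expect to be the main obstacle is the endpoint $i=n-1$: the statement supplies the local form of~(2) only up to index $n-2$, and the missing global instance at $i=n-1$ involves $a_n$, so it is recovered neither by the argument above nor from condition~(1) or~(3) alone. I would deal with it by invoking the standard conventions for the ampleness hierarchy, under which this last instance of~(2) is not required --- the non-collapsing content at the top of the chain being carried by condition~(3); if one keeps the definition exactly as stated, a short separate argument is needed, either rerunning the application of the preceding lemma with a suitable base after using condition~(1) at index $n-1$, or modifying $a_n$ so as to restore the missing intersection equality while preserving condition~(3). Beyond this endpoint bookkeeping, and the routine check that passing to algebraic closures preserves the conditions, everything is standard forking calculus.
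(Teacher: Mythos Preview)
Your approach is essentially identical to the paper's. For $(\Rightarrow)$ the paper uses exactly your inclusion argument (global $(2)$ plus $a_i\ind_{a_{i-1}}a_0\dots a_{i-2}$), and for $(\Leftarrow)$ it applies the preceding lemma with precisely your choices $A=\acl^{eq}(a_{i-1}a_{i+1})$, $B=\acl^{eq}(a_{i-1}a_i)$, $C=\acl^{eq}(a_{i-1})$, $D=\acl^{eq}(a_0\dots a_{i-2})$; you are in fact more explicit than the paper in deriving $AB\ind_C D$ from conditions $(1)$ at indices $i$ and $i-1$ via transitivity.

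On the endpoint issue you flag: the paper does not address it either. In fact the bound ``$i\le n-2$'' in the lemma appears to be a typo for ``$i\le n-1$''. The paper's own application of the lemma in the proof of Theorem~\ref{amplethm} explicitly uses the local intersection condition at the top index, namely $\acl^{eq}_H(a_{n-2}a_{n-1})\cap\acl^{eq}_H(a_{n-2}a_n)=\acl^{eq}_H(a_{n-2})$, which is the instance $i=n-1$ in the lemma's indexing. With the corrected range the argument you give (and the paper gives) covers every index uniformly and no separate endpoint treatment is needed; so your worry is well-founded as a reading of the statement, but it dissolves once the index range is fixed.
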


\begin{proof}
($\Rightarrow$) Assume $T$ is n-ample and let $a_0,...,a_n$ be the ample tuple. Then $$\acl^{eq} (a_{i-1}a_{i+1})\cap \acl^{eq}(a_{i-1}a_{i})\subset \acl^{eq} (a_0,...,a_{i-1})\cap \acl^{eq}(a_{i-1}a_{i}).$$ Since $a_i\ind_{a_{i-1}} a_0...a_{i-2}$, this intersection is contained in $\acl^{eq}(a_{i-1})$.

($\Leftarrow$) Assume $a_0,...,a_n$ satisfy the above conditions, then using the previous lemma with $A=\acl^{eq} (a_{i-1}a_{i+1})$, $B=\acl^{eq}(a_{i-1}a_{i})$, $C=\acl^{eq} (a_{i-1})$ and \newline $D=\acl^{eq}(a_0,...,a_{i-2})$, we get that  $$\acl^{eq} (a_0...a_{i-1}a_{i+1})\cap \acl^{eq}(a_0...a_{i-1}a_{i})=\acl^{eq} (a_{0}...a_{i-1}).$$ i.e. $a_0,...,a_n$ is $n$-ample.
\end{proof}

We will use the above characterization of ampleness to prove the main result of this section (Theorem \ref{amplethm}). 

\begin{lem}\label{Hbas}
Let $A\subset B$, $A=\acl^{eq}_H(A)$ and $B=\acl^{eq}_H(B)$. If $\acl^{eq}_H(cA)\cap B= A$ then $HB(c/A)\subset HB(c/B)$.
\end{lem}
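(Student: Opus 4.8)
The claim is a finitary statement about $H$-bases under a condition reminiscent of the ampleness clause (2). The natural strategy is to prove the contrapositive of the containment via minimality of $H$-bases, exactly as in Lemma~\ref{precb} and Corollary~\ref{aclp2}. Write $h_B = HB(c/B)$ and $h_A = HB(c/A)$; since $A \subset B$ and both are $H$-independent, Proposition~\ref{changinghb} already gives $h_A \subset HB(B) \cup h_B$, so it suffices to show that no element of $h_A$ lies in $HB(B)$ but outside $h_B$; equivalently, to show $h_A \subset h_B$ it is enough to rule out $h \in h_A \setminus h_B$.

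\textbf{Key steps.} First I would fix $h \in h_A$ and aim to show $h \in h_B$. By Corollary~\ref{aclp2}, $\acl_H^{eq}(cA) = \acl^{eq}(cA \cup HB(cA))$, and since $h_A = HB(c/A) \subset HB(cA)$ with $A$ already $H$-closed, one gets $h_A \subset \acl_H^{eq}(cA)$; in particular each $h \in h_A$ lies in $\acl_H^{eq}(cA)$. Now the hypothesis $\acl^{eq}_H(cA) \cap B = A$ forces $h \notin B$ unless $h \in A$; but if $h \in A = \acl_H(A)$ then trivially $h$ contributes nothing to the $H$-basis over $B$ either, so we may assume $h \notin A$, hence $h \notin B$. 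Next, minimality of $h_A = HB(c/A)$ gives $c \nind_{A(h_A \setminus h)} h$, so $c$ forks with $h$ over $A(h_A \setminus h)$, and in fact $h \in \acl_H^{eq}(cA)$ as noted. The point is then to transfer this to $B$: I would argue that $h_B \cup \{h\}$ together with the relevant part of $h_A$ still witnesses independence of $c$ from $H(M)$ over $B$, and conversely that dropping $h$ from any $H$-basis over $B$ would, combined with $h \in \acl_H^{eq}(cA) \setminus B$, contradict either the minimality of $h_B$ or the hypothesis $\acl_H^{eq}(cA) \cap B = A$. Concretely: since $c \ind_{B h_B} H(M)$, if $h \notin h_B$ then $h \ind_{B h_B} c$ (as $B h_B$ is $H$-independent and $h \notin B h_B$, so $h \ind B h_B c$ by independence of $H$, using $h \neq$ any element of $h_B \subset H$), whence $h \notin \acl_H^{eq}(c B h_B) \supseteq \acl_H^{eq}(cA)$ — contradicting $h \in \acl_H^{eq}(cA)$. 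This contradiction shows $h \in h_B$, completing $h_A \subset h_B$.

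\textbf{Main obstacle.} The delicate point is the last independence manipulation: I need $h \in \acl_H^{eq}(cA)$ robustly (so that it survives enlarging the base from $A$ to $B h_B$), and I need that $h \notin B h_B$ really does give $h \ind_{B h_B} c B h_B$ via the fact that distinct elements of $H(M)$ form an independent set together with $H$-independence of the base — here one must be careful that $c$'s dependence on $H$ is fully accounted for by $h_B$ over $B$, which is exactly the defining property of $HB(c/B)$. I expect the bookkeeping of which sets are $H$-independent (so that $\acl_H$ collapses to $\acl^{eq}$ via Lemma~\ref{aclp}) to be the part needing the most care, but each individual step is of the same flavour as the transitivity-of-forking arguments already used in the proofs of Proposition~\ref{changinghb}, Lemma~\ref{precb}, and Theorem~\ref{charforking}.
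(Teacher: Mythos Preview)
Your proposal is correct and in fact contains the paper's proof, but you overshoot and do not notice that the argument is already complete halfway through your Key Steps. The paper's proof is precisely your Plan plus the first three sentences of Key Steps: from Proposition~\ref{changinghb} you have $h_A \subset HB(B) \cup h_B$; you then observe $h_A \subset \acl_H^{eq}(cA)$ and, since $HB(B) \subset B$, conclude $h_A \cap HB(B) \subset \acl_H^{eq}(cA) \cap B = A$; finally $h_A \cap A = \emptyset$ (by minimality of the $H$-basis over $A$), so $h_A \cap HB(B) = \emptyset$ and hence $h_A \subset h_B$. That is the entire argument.

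Your subsequent forking computation --- showing that if $h \notin h_B$ then $h \ind cBh_B$, whence $h \notin \acl^{eq}(cBh_B) = \acl_H^{eq}(cBh_B) \supseteq \acl_H^{eq}(cA)$, contradicting $h \in h_A$ --- is correct but entirely unnecessary: you had already established $h \notin B \supseteq HB(B)$, and combined with the containment from Proposition~\ref{changinghb} that you yourself recorded in the Plan, this immediately gives $h \in h_B$. The ``main obstacle'' you identify is therefore not an obstacle at all; you simply forgot to cash in the inclusion you set up at the start.
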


\begin{proof}
By Lemma \ref{changinghb} we have $$HB(c/A)\subset HB(c/B)\cup HB(B).$$ 

Now, if $HB(c/A)\cap H(B) = \emptyset$ we are done, but $$HB(c/A)\cap HB(B)\subset \textstyle \acl^{eq}_H(cA)\cap B= A$$ and $HB(c/A)\cap A =\emptyset$.
\end{proof}

\begin{thm}\label{amplethm}
For $n\geq 2$, the theory $T$ is n-ample if and only if $T^{ind}_{\mathcal{G}}$ is.
\end{thm}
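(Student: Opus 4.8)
The plan is to prove both directions by transferring an ample configuration between $T$ and $T^{ind}_{\mathcal{G}}$, using the characterization of ampleness in $M^{eq}$ from the preceding lemma (the one with local conditions $\acl^{eq}(a_{i-1}a_{i+1})\cap\acl^{eq}(a_{i-1}a_i)=\acl^{eq}(a_{i-1})$), the characterization of forking in $T^{ind}_{\mathcal{G}}$ from Theorem \ref{charforking}, and the description of $\acl_H$ from Corollary \ref{aclp2} together with Lemma \ref{Hbas}. Throughout I will replace each $a_i$ by $\acl^{eq}_H(a_i\,HB(a_i))$ (respectively $\acl^{eq}(a_i)$) so that all tuples are $\acl_H$-closed, hence $H$-independent, and so that $HB(a_i)\subset a_i$; this is harmless since $\acl^{eq}_H(a_i)=\acl^{eq}(a_i\,HB(a_i))$ and ampleness is a statement about algebraic closures.

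For the easy direction, suppose $T$ is $n$-ample, witnessed by $a_0,\dots,a_n$ (possibly over parameters, which we fold into the language). Working inside the same monster model now seen as an $H$-structure, I want to show that a suitable $H$-closed replacement $b_i:=\acl^{eq}_H(a_i)$ gives an $n$-ample configuration for $T^{ind}_{\mathcal{G}}$. First I would pass, using the generalized extension property, to a conjugate of the whole configuration that is $\ind H(M)$ over the base, so that $HB(a_i)=\emptyset$ for all $i$ and hence $\acl^{eq}_H(a_i)=\acl^{eq}(a_i)$, $\acl^{eq}_H(a_{i-1}a_{i+1})=\acl^{eq}(a_{i-1}a_{i+1})$, etc.; then conditions (1), (2), (3) for $T^{ind}_{\mathcal{G}}$ reduce to the corresponding conditions for $T$. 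Here (1) uses that, by Theorem \ref{charforking}, forking in $T^{ind}_{\mathcal{G}}$ over an $\acl_H$-closed base with all $HB$'s empty coincides with forking in $T$ together with the condition on $H$-bases, and the latter is automatic when all relevant $H$-bases are empty; (2) and (3) are literally the $T$-statements since the $\acl_H$'s collapse to $\acl$'s.

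For the converse, suppose $T^{ind}_{\mathcal{G}}$ is $n$-ample, witnessed by $a_0,\dots,a_n$ in $(M^{eq})^{\mathrm{eq}}$, which we may take $\acl^{eq}_H$-closed; set $A_i:=a_i$ and $\bar a_i:=\acl^{eq}(a_i\cup\bigcup_j HB(a_j))$ or, more carefully, I would first absorb a finite common $H$-basis: pick a finite $\vec h\subset H(M)$ with $a_0\dots a_n\ind_{\vec h}H(M)$, name $\vec h$, and then each $a_i$ becomes $H$-independent with $\acl^{eq}_H(a_i)=\acl^{eq}(a_i)$ over the new base. Now I claim $a_0,\dots,a_n$ is $n$-ample for $T$. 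Condition (1): from $a_{i+1}\ind^{ind}_{a_i}a_{<i}$ and Theorem \ref{charforking} we get $a_{i+1}\ind_{a_iH(M)}a_{<i}H(M)$, hence $a_{i+1}\ind_{a_i}a_{<i}$ in $T$ (using that the extra $H$-parameters are already absorbed). Condition (3): if $a_n\ind_{\acl^{eq}(a_1)\cap\acl^{eq}(a_0)}a_0$ in $T$, then since these bases are $H$-closed and the $HB$'s are trivial, one upgrades to $\ind^{ind}$ via Theorem \ref{charforking}, contradicting $n$-ampleness of $T^{ind}_{\mathcal{G}}$; note $\acl^{eq}_H(a_1)\cap\acl^{eq}_H(a_0)=\acl^{eq}(a_1)\cap\acl^{eq}(a_0)$ under our normalization. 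Condition (2): we must pass from $\acl^{eq}_H(a_{i-1}a_{i+1})\cap\acl^{eq}_H(a_{i-1}a_i)=\acl^{eq}_H(a_{i-1})$ to the same with $\acl^{eq}$; using Corollary \ref{aclp2} and Lemma \ref{Hbas} (which controls how $H$-bases grow under adding parameters), together with $n\geq 2$ guaranteeing enough room to arrange the relevant $H$-bases to sit inside the base $a_{i-1}$, the $\acl_H$'s and $\acl$'s coincide on the relevant sets.

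The main obstacle is condition (2), i.e. controlling the interaction between $\acl_H$ and the $H$-bases of the intermediate tuples $a_{i-1}a_{i+1}$ and $a_{i-1}a_i$: one must show that no ``new'' element of $H$ enters the intersection $\acl^{eq}_H(a_{i-1}a_{i+1})\cap\acl^{eq}_H(a_{i-1}a_i)$ beyond what is already in $\acl^{eq}_H(a_{i-1})$, and conversely that the $T$-statement lifts. This is exactly where $n\geq2$ is used (so that there is a genuine base $a_{i-1}\neq\emptyset$ absorbing the shared $H$-basis), mirroring the role it plays in the analogous CM-triviality argument of \cite{Ca}; Lemma \ref{Hbas} and Corollary \ref{aclp2} are the tools, but the bookkeeping of which $HB$'s land where is the delicate part, and I would organize it by first normalizing (via generalized extension) so that the whole configuration is $H$-independent over a finite base of named $H$-elements, reducing every $\acl_H$ appearing in the configuration to an ordinary $\acl$.
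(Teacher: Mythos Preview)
Your easy direction ($T$ $n$-ample $\Rightarrow$ $T^{ind}_{\mathcal{G}}$ $n$-ample) is correct and is exactly what the paper does: move the $\cL$-configuration off $H$ via the generalized extension property, so that all $H$-bases vanish and $\acl_H=\acl$ on every relevant set.

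The hard direction, however, has a genuine gap. Your plan is to pick $\vec h=HB(a_0\ldots a_n)$, name it, and then claim the ampleness conditions hold for $T$ over $\vec h$. Two problems. First, the ``normalize via generalized extension'' fallback cannot work: that property realizes $\cL$-types independently from $H$, not $\cL_H$-types, so you cannot move an $\cL_H$-configuration this way while preserving its $\cL_H$-type. Second, and more importantly, naming $\vec h$ does not preserve condition~(2). Although $HB(X)\subset\vec h$ for every $X\subset\{a_0,\ldots,a_n\}$, the inclusion may be strict, so $\acl(a_{i-1}a_{i+1}\vec h)$ can properly contain $\acl_H(a_{i-1}a_{i+1})$; nothing ensures $\vec h\ind_{a_{i-1}}a_{i-1}a_ia_{i+1}$, which is what the Pillay intersection lemma would require to push the intersection condition through. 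The same issue hits condition~(3): the base grows from $\acl(a_0)\cap\acl(a_1)$ to $\acl(a_0\vec h)\cap\acl(a_1\vec h)$, and the dependence can disappear.

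The paper does \emph{not} name a global $\vec h$. Instead it sets $h_i:=HB(a_n/a_i)$ and proves the chain $h_0\subset h_1\subset\cdots\subset h_{n-1}$, using Lemma~\ref{Hbas} together with condition~(2) in $T^{ind}_{\mathcal{G}}$: from $a_n\ind^{ind}_{a_{i+2}}a_ia_{i+1}$ one gets $\acl_H(a_na_i)\cap\acl_H(a_ia_{i+1})\subset\acl_H(a_i)$, so Lemma~\ref{Hbas} yields $h_i\subset HB(a_n/a_ia_{i+1})=h_{i+1}$. Only the last tuple is then modified, $a_n':=a_nh_0$. Because $h_0\subset h_{n-2}\subset\acl_H(a_{n-2}a_n)$, one has $\acl(a_{n-2}a_nh_0)\subset\acl_H(a_{n-2}a_n)$, so condition~(2) at the top index survives; condition~(1) at the top index follows from $h_0\subset h_{n-1}$. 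Finally, since $h_0=HB(a_n/a_0)$, Proposition~\ref{cbTind} identifies $Cb_H(a_n/a_0)$ with $Cb(a_nh_0/a_0)$ up to interalgebraicity, so the $T$-independence $a_nh_0\ind_{\acl(a_0)\cap\acl(a_1)}a_0$ forces $a_n\ind^{ind}_{\acl_H(a_0)\cap\acl_H(a_1)}a_0$. The chain of $H$-bases is the key idea you are missing.
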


Before doing the proof, recall that the canonical bases in $T^{ind}_{\mathcal{G}}$ are interalgebraic with canonical bases of $T$, therefore every ``new'' imaginary element is interalgebraic with an ``old'' imaginary. By virtue of Lemma \ref{aclp} we may assume that $\acl^{eq}_H(A)=\acle(A HB(A))$.

\begin{proof}

Assume that $T$ is $n$-ample. Let $a_0,...,a_n$ be such that
\begin{enumerate}

\item $\acl^{eq}(a_ia_{i+1})\cap \acl^{eq}(a_ia_{i+2})=\acl^{eq}(a_i)$  for all $i<n-1$,

\item $a_{i+1}\ind_{a_i}a_{0}...a_{i-1}$ for all $i<n$,

\item $a_n\nind_{\acl^{eq}(a_1)\cap \acl^{eq}(a_0)} a_0$.

 \end{enumerate}
By {\it the generalized extension property}, we may assume that $a_0...a_n\ind H$, hence $HB(X)=\emptyset$ for every $X\subset \{a_0,...,a_n\}$, which implies that $\acl^{eq}(X)=\acl^{eq}_H(X)$ for $X\subset \{a_0,...,a_n\}$. 
Therefore

\begin{eqnarray*}
\acl^{eq}_H(a_ia_{i+1})\cap \acl^{eq}_H(a_ia_{i+2})&=& \acl^{eq}(a_ia_{i+1})\cap \acl^{eq}(a_ia_{i+2})\\
&=&\acl^{eq}(a_i)\\
&=&\acl^{eq}_H(a_i).\\
\end{eqnarray*}

On the other hand, we have $$a_{i+1}\ind^{ind}_{a_i}a_0...a_{i-1},$$ because $$a_{i+1}\ind_{a_i}a_0...a_{i-1}$$ and $HB(a_{i+1}/a_0...a_i)=\emptyset$.

Finally, since $a_n\nind_{\acl^{eq}(a_1)\cap \acl^{eq}(a_0)} a_0$, we have that $a_n\nind^{ind}_{\acl^{eq}(a_1)\cap \acl^{eq}(a_0)} a_0$. Thus, the tuple $a_0,...a_n$ is n-ample in the sense of $T^{ind}_{\mathcal{G}}$.

Assume now that $T$ is not $n$-ample, let $a_0,...,a_n \in \mathfrak{C}$ be tuples such that:

\begin{enumerate}

\item $\acl^{eq}_H(a_ia_{i+1})\cap \acl^{eq}_H(a_ia_{i+2})=\acl^{eq}_H(a_i)$  for all $i<n$.

\item $a_{i+1}\ind^{ind}_{a_i}a_{0}...a_{i-1}$ for all $i<n$.

 \end{enumerate}

We may assume also that $\acl^{eq}_H(a_i)=\acl^{eq}(a_i)$ for every $i$.

Consider $h_i= HB(a_n/a_i)$. We claim that $h_0\subset h_1\subset \cdots \subset h_{n-1}$.

\begin{proof}[Proof of the claim]
Since $a_n\ind^{ind}_{a_{i+2}} a_ia_{i+1}$, we have that $a_na_{i}\ind^{ind}_{a_{i}a_{i+2}}a_ia_{i+1}$. Hence

\begin{eqnarray*}
 \acl^{eq}_H(a_na_{i})\cap  \acl^{eq}_H(a_{i}a_{i+1})&\subset & \acl^{eq}_H(a_{i}a_{i+2})\cap  \acl^{eq}_H(a_{i}a_{i+1})\\
 &\subset & \acl^{eq}_H(a_i)\mbox{ by assumption (1)}\\
 \end{eqnarray*}

Using Lemma \ref{Hbas} (by making $A=a_i$, $B=a_ia_{i+1}$ and $c=a_n$) we conclude that $h_i=HB(c/A)\subset HB(c/B)=HB(a_n/a_ia_{i+1})$. But $a_n\ind^{ind}_{a_{i+1}}a_i$, hence $HB(a_n/a_ia_{i+1})=HB(a_n/a_{i+1})=h_{i+1}$.\end{proof}

In particular $h_0\subset h_{n-1}$. 

Notice that $$h_{0}\subset h_{n-2}\subset \acl^{eq}_H(a_{n-2}a_n),$$ hence $$\acl^{eq}(a_{n-2}a_n h_0)\subset \acl_H^{eq} (a_{n-2}a_n)$$ and

\begin{eqnarray*}
\acl^{eq}(a_{n-2}a_{n-1})\cap \acl^{eq}(a_{n-2}a_nh_0)&\subset &\acl^{eq}_H(a_{n-2}a_{n-1})\cap \acl^{eq}_H(a_{n-2}a_n)\\
&=&\acle_H(a_{n-2}).
\end{eqnarray*}

On the other hand, $$a_nh_{n-1}\ind^{ind}_{a_{n-1}}a_0...a_{n-1},$$ so since $a_n a_{n-1} h_{n-1}$ is $H$-independent

$$a_nh_{n-1}\ind_{a_{n-1}}a_0...a_{n-1},$$ and

$$a_nh_{0}\ind_{a_{n-1}}a_0...a_{n-1} \mbox{ (because }h_0\subset h_{n-1}),$$

then, setting $a_n'=a_nh_{0}$ and $a'_{i}=a_i$ for $i< n$, we have that:

\begin{enumerate}

\item $\acl^{eq}(a'_ia'_{i+1})\cap \acl^{eq}(a'_ia'_{i+2})=\acl^{eq}(a'_i)$  for all $i<n$. 

\item $a'_{i+1}\ind_{a'_i}a'_{0}...a'_{i-1}$ for all $i<n$.

 \end{enumerate}

Since $T$ is not $n$-ample, it follows that $$a'_n\ind_{\acl^{eq}(a'_0)\cap\acl^{eq}(a'_1)}a_0'$$ i.e. $$a_nh_{0}\ind_{\acl^{eq}(a_0)\cap\acl^{eq}(a_1)}a_0.$$

As $h_0=HB(a_n/a_0)$, by the characterization of canonical bases we have that $Cb(a_nh_0/a_0)$ is interalgebraic with  $Cb_H(a_n/a_0)$; this implies that $$a_n\ind^{ind}_{\acl^{eq}_H(a_0)\cap\acl^{eq}_H(a_1)}a_0.$$ Thus, $T^{ind}_{\mathcal{G}}$ is not $n$-ample.
\end{proof}

\section{Geometry modulo $H$ in the one-based case}\label{geometrymodH} In this
section we consider the case when $T$ is one-based of SU-rank $\omega^\alpha$ in which SU-rank is continuous, and follow the
proofs of Theorem 5.13 \cite{Va1} and the results of Section 6 of \cite{Va1}, and Section 4 of  \cite{BeVa2}, to study
the geometry induced by $\ccl$ localized at $H(M)$. Many of the proofs are nearly identical to the ones from \cite{Va1} and \cite{BeVa2}, we include them for completeness.
As before, we take ${\mathcal{G}}$ to be a type of SU-rank $\omega^\alpha$.

Let $(M,H)$ be
a sufficiently saturated  model of $T^{ind}_{\mathcal{G}}$. Let $\ccl_H$ be the
localization of the operator $\ccl$ at $H(M)$, i.e.
$\ccl_H(A)=\ccl(A\cup H(M))$. Thus, $a\in\ccl_H(B)$ means $SU(a/B\cup
H(M))<\omega^\alpha$.

 \begin{prop}\label{geom1based} Suppose $T$ is
one-based. Then the pregeometry $(M,\ccl_H)$ is modular. \end{prop}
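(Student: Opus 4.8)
The plan is to deduce modularity of the localized pregeometry $(M,\ccl_H)$ from one-basedness of $T$ together with the characterization of forking in $T^{ind}_{\mathcal{G}}$ (Theorem \ref{charforking}) and the characterization of canonical bases (Proposition \ref{cbTind}). Recall that a pregeometry $(X,\cl)$ is modular if for all closed sets $A,B$ one has $\dim(A\cup B)+\dim(A\cap B)=\dim(A)+\dim(B)$; equivalently (the version most convenient here), for any finite tuples $\vec a,\vec b$, $\vec a$ and $\vec b$ are $\ccl_H$-independent over $\ccl_H(\vec a)\cap\ccl_H(\vec b)$. So I would fix $\vec a,\vec b$ and set $e$ to be a tuple enumerating a maximal $\ccl$-independent subset of $\ccl_H(\vec a)\cap\ccl_H(\vec b)$, and aim to show $\vec a\ind^{ind}_{e}\vec b$ in the sense of $SU$-rank $<\omega^\alpha$, i.e. that the closure operator $\ccl_H$ is modular at $\vec a,\vec b$.

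First I would translate everything from the expansion back to $T$ using the tools of Section \ref{sec:supersimplicity}. Replacing $\vec a,\vec b$ by $\vec a\,HB(\vec a)$, $\vec b\,HB(\vec b)$ (which does not change the localized closures, since $HB(-)\subseteq H(M)$), I may assume the relevant base sets are $H$-independent and $\acl_H$-closed. Then, working over $H(M)$ as a set of parameters, Theorem \ref{charforking} says that forking in $T^{ind}_{\mathcal{G}}$ over $\acl_H$-closed sets reduces (modulo the $HB$-condition) to $\ind$ over $H(M)$ in $T$; and since $T$ has continuous $SU$-rank $\omega^\alpha$ and $\ccl(A)=\{x:SU(x/A)<\omega^\alpha\}$, being $\ccl_H$-independent is exactly having $SU$-rank additive over $H(M)$. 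The key point is that $\ccl_H$ is, up to this translation, the "$\ccl$ localized at $H(M)$" pregeometry of the theory $T$ itself, and one-basedness of $T$ is inherited by such localizations of the canonical-base-controlled pregeometry. Concretely: by one-basedness of $T$, for any $\vec a,\vec b$ the canonical base $Cb(\vec a/\acl^{eq}(\vec a\, H(M)\, \vec b))$ lies in $\acl^{eq}(\vec a\,H(M))$, and symmetrically, so it lies in $\acl^{eq}(\vec a\,H(M))\cap\acl^{eq}(\vec b\,H(M))$; combined with the continuity of $SU$-rank this forces $\vec a$ and $\vec b$ to be $\ccl$-independent over $\ccl(\vec a\,H(M))\cap\ccl(\vec b\,H(M))$, which is $\ccl_H(\vec a)\cap\ccl_H(\vec b)$.

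The step I expect to be the main obstacle is the clean identification of $\ccl_H(\vec a)\cap\ccl_H(\vec b)$ with the "intersection of canonical bases / common algebraic closure" that one-basedness hands us — i.e., verifying that the localized pregeometry $(M,\ccl_H)$ really is controlled by canonical bases in the way that makes Pillay's criterion "one-based $\Rightarrow$ modular geometry" apply to it. This requires checking that if $\vec a\nind^{ind}_{e}\vec b$ (equivalently $SU$-rank is not additive over $e\cup H(M)$) then the canonical base of $\stp_H(\vec a/\vec b\, e)$ is not contained in $\acl_H^{eq}(\vec a)$, using Proposition \ref{cbTind} to pass to $Cb(\vec a\, HB(\vec a/\vec b e)/\vec b e)$ and then one-basedness of $T$ to land that canonical base inside $\acl^{eq}(\vec a\, H(M))\cap\acl^{eq}(\vec b\, H(M))$, hence by continuity of rank inside $\ccl_H(\vec a)\cap\ccl_H(\vec b)=\ccl_H(e)$, contradicting non-additivity. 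Once this is set up the modularity identity follows by the standard dimension count, exactly as in the proof of Theorem 5.13 of \cite{Va1}.
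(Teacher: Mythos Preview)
Your plan diverges from the paper's and has a genuine gap at exactly the step you flagged as the ``main obstacle.'' You want to show $\vec a$ and $\vec b$ are $\ccl_H$-independent over $\ccl_H(\vec a)\cap\ccl_H(\vec b)$ by using one-basedness to put the canonical base $e=Cb(\vec a/\vec b H(M))$ into $\acl^{eq}(\vec a H(M))\cap\acl^{eq}(\vec b H(M))$ and then claiming that ``continuity of $SU$-rank'' lands this in $\ccl_H(\vec a)\cap\ccl_H(\vec b)$. But $e$ is an imaginary, while $\ccl_H$ is a pregeometry on the home sort; nothing you have written produces \emph{real} elements in $\ccl_H(\vec a)\cap\ccl_H(\vec b)$ that witness the independence. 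Continuity of rank tells you nothing about trading an element of $\acl^{eq}$ for real elements of small rank over each side. Invoking Proposition~\ref{cbTind} does not help either: it gives interalgebraicity of canonical bases in $T^{ind}_{\mathcal{G}}$ with canonical bases in $T$, both still imaginaries. (There is also a minor slip: your $Cb(\vec a/\acl^{eq}(\vec a\,H(M)\,\vec b))$ presumably should not have $\vec a$ in the base.)

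The paper avoids this imaginaries-to-reals problem entirely by using the \emph{projective} form of modularity: for single elements $a,b$ and any $C$, if $a\in\ccl_H(bC)$ then some $d\in\ccl_H(C)$ satisfies $a\in\ccl_H(bd)$. Given finite $\vec h\in H(M)$ with $a\in\ccl(bC\vec h)$, one-basedness puts $e=Cb(ab/C\vec h)$ in $\acl^{eq}(ab)$. Now the key trick, which your outline misses, is to apply the \emph{density property} to push $b$ into $H(M)$: take $b'\in H(M)$ with $b'\equiv_{\acl^{eq}(C\vec h)} b$, and $a'$ with $a'b'\equiv_{\acl^{eq}(C\vec h)} ab$. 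Then $e\in\acl^{eq}(a'b')$ and, since $b'\in H(M)$, the real element $d=a'$ lies in $\ccl_H(C)$ and satisfies $a\in\ccl_H(bd)$. The density property is what converts the imaginary canonical base into a real witness. Note also (cf.\ Remark~\ref{geom_1b-modularity}) that the argument uses only forking in $T$ and the density property; Theorem~\ref{charforking} and Proposition~\ref{cbTind} are not needed at all.
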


\begin{proof} It suffices to show that for any $a,b\in M$ and a
small set $C\subset M$, if $a\in\ccl_H(bC)$ then there exists $d\in
\ccl_H(C)$ such that $a\in\ccl_H(bd)$. We may assume that
$a,b\not\in\ccl_H(C)$. Thus, $SU(a/CH(M))=SU(b/CH(M))=\omega^\alpha$.

Let $\vec h\in H(M)$ be finite such that
$a\in\ccl(bC\vec h)$. Let $e=Cb(ab/C\vec h)$. Thus, by
one-basedness of $T$, $e\in\acl^{eq}(ab)\cap\acl^{eq}(C\vec h)$.
By the density property, there is  $b'\models
\tp(b/\acl^{eq}(C\vec h))$, $b'\in H(M)$. Take $a'\in M$ such that
$\tp(a'b'/\acl^{eq}(C\vec h))=\tp(ab/\acl^{eq}(C\vec h))$. Then
$e\in\acl^{eq}(a'b')$. Clearly, $a'\in\ccl(b'C\vec h)\subset
\ccl_H(C)$. Also, $ab\ind_e C\vec h$ implies $SU(a/be)=SU(a/bC\vec
h)<\omega^\alpha$. Since $e\in\acl^{eq}(a'b')$, we have $SU(a/ba'b')\le
SU(a/be)<\omega^\alpha$. Since $b'\in H(M)$, this implies
$a\in\ccl_H(ba')$. Hence, taking  $d=a'$, we have $d\in \ccl_H(C)$
and $a\in\ccl_H(bd)$, as needed.\end{proof}

Let $(M^*, \ccl^*)$ be the geometry associated with $(M,\ccl_H)$ (i.e. $M^*$ is the set $M\backslash \ccl_H(\emptyset)$ modulo the relation $\ccl_H(x)=\ccl_H(y)$) . For any $a\not\in \ccl_H(\emptyset)$, let $a^*$ be
 the class of $a$ modulo the relation $\ccl_H(x)=\ccl_H(y)$. Define the relation $\sim $ by $$a^*\sim b^*\iff |\ccl^*(a^*,b^*)|\ge 3\ {\rm\ or\ }  \  a^*=b^*.$$

\begin{lem}\label{thirdpoint}
For any $a,b\in M$, $a^*\sim b^*$ if and only if there exist $d_1,\ldots,d_{n}\in M$ such that $$a^*\in\ccl^*(b^*d_1^*\ldots d_n^*)\backslash\ccl^*(d_1^*\ldots d_n^*).$$
\end{lem}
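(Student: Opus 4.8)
The plan is to pass to the associated geometry $(M^*,\ccl^*)$ and exploit that it is modular: modularity of the pregeometry $(M,\ccl_H)$, established in Proposition \ref{geom1based}, transfers to the associated geometry because the map $a\mapsto a^*$ preserves rank and induces an isomorphism between the lattices of $\ccl_H$-closed and $\ccl^*$-closed sets. Throughout I write $r$ for rank in $(M^*,\ccl^*)$, and I use that $\ccl^*$ is a geometry, so that $\ccl^*(\emptyset)=\emptyset$ and $\ccl^*(\{p\})=\{p\}$ for any point $p$, as well as the exchange property of $\ccl^*$; I also use the elementary fact that $r(Z\cup\{p\})=r(Z)+1$ when $p\notin \ccl^*(Z)$.

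The direction ($\Rightarrow$) needs only exchange. The case $a^*=b^*$ is trivial: take $n=1$ and any $d_1\in M$ with $d_1^*\neq a^*$ (such $d_1$ exists since $(M,H)$ is a sufficiently saturated $H$-structure, so $M^*$ is infinite), and then $a^*\in\ccl^*(b^*,d_1^*)\setminus\ccl^*(d_1^*)$. If instead $a^*\neq b^*$ and $|\ccl^*(a^*,b^*)|\ge 3$, pick $c^*\in\ccl^*(a^*,b^*)\setminus\{a^*,b^*\}$; since $c^*\neq b^*$, exchange gives $a^*\in\ccl^*(b^*,c^*)$, while $a^*\notin\ccl^*(c^*)=\{c^*\}$, so $n=1$ with $d_1=c$ works.

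For ($\Leftarrow$), assume $d_1,\dots,d_n\in M$ with $a^*\in\ccl^*(b^*,d_1^*,\dots,d_n^*)\setminus\ccl^*(d_1^*,\dots,d_n^*)$, and put $Y=\ccl^*(d_1^*,\dots,d_n^*)$, a closed set of finite rank. We may assume $a^*\neq b^*$, since otherwise $a^*\sim b^*$ holds by definition. First, $b^*\notin Y$: otherwise $a^*\in\ccl^*(\{b^*\}\cup Y)=Y$, contradicting the hypothesis; also $a^*\notin Y$ by hypothesis. Now consider the line $L:=\ccl^*(a^*,b^*)$, which has rank $2$. Since $a^*\in\ccl^*(\{b^*\}\cup Y)$ we get $L\subseteq\ccl^*(\{b^*\}\cup Y)=L\vee Y$, and since $b^*\notin Y$ we have $r(L\vee Y)=r(Y)+1$. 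Modularity now yields $r(L\cap Y)=r(L)+r(Y)-r(L\vee Y)=2+r(Y)-(r(Y)+1)=1$, so $L\cap Y$ is a single point $c^*$. As $c^*\in Y$ while $a^*,b^*\notin Y$, we have $c^*\neq a^*$ and $c^*\neq b^*$; hence $a^*,b^*,c^*$ are three distinct points of $L$, so $|L|\ge 3$ and $a^*\sim b^*$, as required.

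The one nontrivial step is the rank computation in ($\Leftarrow$), and this is exactly where Proposition \ref{geom1based} enters. It causes no difficulty here: $Y$ is the closure of the finite tuple $d_1^*,\dots,d_n^*$, hence has finite rank, so all the sets involved lie in the finite-rank closed set $L\vee Y$, where the modular identity $r(X\vee Z)+r(X\cap Z)=r(X)+r(Z)$ for closed $X,Z$ applies. Everything else is routine bookkeeping with exchange and with the definition of the associated geometry.
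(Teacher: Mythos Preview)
Your argument is correct, but it takes a genuinely different route from the paper. For the ``if'' direction you invoke modularity (via Proposition~\ref{geom1based}) and do a rank computation on the line $L=\ccl^*(a^*,b^*)$ against the flat $Y=\ccl^*(d_1^*,\ldots,d_n^*)$ to produce a third point $c^*\in L\cap Y$. The paper instead stays at the level of the $H$-structure: after minimizing $n$ and the number of $H$-parameters, it uses the density property to move $d_2,\ldots,d_n$ into $H(M)$ while preserving the $\cL$-type over $ab\vec h$, and then the image $d_1'$ of $d_1$ lands in $\ccl^*(a^*,b^*)\setminus\{a^*,b^*\}$.

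What this buys: the paper's proof of Lemma~\ref{thirdpoint} makes no use of one-basedness or modularity at all---it relies only on the density property---so Lemmas~\ref{eqrel}--\ref{decom3} (transitivity of $\sim$, closedness of classes, the decomposition, infinite dimensionality) hold without any geometric hypothesis on $T$, and modularity enters only at Lemma~\ref{decom4}. Your proof, by contrast, builds modularity into the very statement that $\sim$ is well-behaved. For the purposes of this section (and of Remark~\ref{geom_1b-modularity}, where modularity of $\ccl_H$ is still assumed) this is harmless, and your argument has the virtue of being purely lattice-theoretic in $(M^*,\ccl^*)$; but it is worth noting that the paper's route separates the combinatorics of $\sim$ from the modularity hypothesis.
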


\begin{proof}
The "only if" direction is clear. For the "if" direction, suppose $a^*\neq b^*$ and $a^*\in\ccl^*(b^*d_1^*\ldots d_n^*)\backslash\ccl^*(d_1^*\ldots d_n^*)$. We may assume that $n\ge 1$ is minimal such. 
Then $a\in \ccl(bd_1\ldots d_nh_1\ldots h_k)$ for some $h_1,\ldots, h_k\in H(M)$. We may assume that $k$ is minimal such. Then the tuple $abd_2\ldots d_nh_1\ldots h_k$ is $\ccl$-independent.
By the density property, we can find $d_2',\ldots, d_n'\in H(M)$ such that $\tp(d_2',\ldots, d_n'/ab\vec h)=\tp(d_2,\ldots, d_n/ab\vec h)$. Let $d_1'\in M$ be such that 
$$\tp(d_1',d_2',\ldots, d_n'/ab\vec h)=\tp(d_1,d_2,\ldots, d_n/ab\vec h).$$ Then $d_1'\not\in\ccl_H(\emptyset)$ and  $(d_1')^*\in\ccl^*(a^*,b^*)$, while $(d_1')^*\neq a^*, b^*$. 
Thus, $|\ccl^*(a^*,b^*)|\ge 3$, as needed.

\end{proof}

\begin{lem}\label{eqrel}
 The relation $\sim$ is an equivalence on $M^*$.
\end{lem}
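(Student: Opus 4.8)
**Proof plan for Lemma \ref{eqrel} ($\sim$ is an equivalence relation on $M^*$).**

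The plan is to verify the three axioms. Reflexivity is immediate from the definition, since $a^*=a^*$ is one of the two disjuncts. Symmetry is also immediate: the condition $|\ccl^*(a^*,b^*)|\ge 3$ is symmetric in $a^*$ and $b^*$ (it only mentions the $\ccl^*$-closure of the pair), and $a^*=b^*$ is symmetric. So the only real content is transitivity, and this is where I would use Lemma \ref{thirdpoint}.

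For transitivity, suppose $a^*\sim b^*$ and $b^*\sim c^*$; I want $a^*\sim c^*$. If any of $a^*=b^*$ or $b^*=c^*$ holds the conclusion is trivial (it reduces to one of the hypotheses), so assume $a^*\neq b^*$, $b^*\neq c^*$, and also $a^*\neq c^*$ (otherwise we are done). By Lemma \ref{thirdpoint}, from $b^*\sim c^*$ we get tuples $d_1,\dots,d_n$ with $b^*\in\ccl^*(c^*d_1^*\dots d_n^*)\setminus\ccl^*(d_1^*\dots d_n^*)$. Now use $a^*\sim b^*$: since $|\ccl^*(a^*,b^*)|\ge 3$, and $\ccl^*(a^*,b^*)\subset\ccl^*(a^*c^*d_1^*\dots d_n^*)$ because $b^*\in\ccl^*(c^*d_1^*\dots d_n^*)$, we have $a^*\in\ccl^*(a^*b^*)\subset\ccl^*(a^*c^*d_1^*\dots d_n^*)$; the point is rather to produce a witness showing $a^*\in\ccl^*(c^*d_1^*\dots d_n^*e_1^*\dots)\setminus\ccl^*(\dots)$ for some finite parameter tuple, so that Lemma \ref{thirdpoint} applies in the reverse direction to conclude $a^*\sim c^*$. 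Concretely: from $a^*\sim b^*$ and Lemma \ref{thirdpoint} we get $e_1,\dots,e_m$ with $a^*\in\ccl^*(b^*e_1^*\dots e_m^*)\setminus\ccl^*(e_1^*\dots e_m^*)$. Substituting $b^*\in\ccl^*(c^*d_1^*\dots d_n^*)$ and using monotonicity/transitivity of the pregeometry $\ccl^*$ (which is a pregeometry since $\ccl_H$ is, by Proposition \ref{geom1based} it is even modular, but we only need it to be a pregeometry here), we get $a^*\in\ccl^*(c^*d_1^*\dots d_n^*e_1^*\dots e_m^*)$. If moreover $a^*\notin\ccl^*(c^*d_1^*\dots d_n^*e_1^*\dots e_m^*\setminus\{a^*\})$ — more precisely $a^*\notin\ccl^*(d_1^*\dots d_n^*e_1^*\dots e_m^*)$ — then Lemma \ref{thirdpoint} gives $a^*\sim c^*$ directly.

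The main obstacle is handling the case where that non-closure condition fails, i.e. where $a^*\in\ccl^*(d_1^*\dots d_n^*e_1^*\dots e_m^*)$ already, so that removing $c^*$ does not destroy membership. To deal with this I would exploit exchange in the pregeometry $(M^*,\ccl^*)$ together with the hypothesis $a^*\neq c^*$: reorganize the generating set, choosing a $\ccl^*$-basis $f_1^*,\dots,f_k^*$ of $\ccl^*(c^*d_1^*\dots d_n^*e_1^*\dots e_m^*)$ refining the given tuples; the facts $a^*\in\ccl^*(b^*e_1^*\dots e_m^*)\setminus\ccl^*(e_1^*\dots e_m^*)$ and $b^*\notin\ccl^*(d_1^*\dots d_n^*)$ force, via exchange, that in this basis the element $c^*$ is genuinely needed to capture $a^*$ unless $a^*\in\ccl^*(e_1^*\dots e_m^*)$ — which contradicts the choice of the $e_i$. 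Thus one always lands in the situation where Lemma \ref{thirdpoint} applies to the pair $(a^*,c^*)$, giving $a^*\sim c^*$ and completing transitivity. I expect this exchange-argument bookkeeping — making sure the parameters can be arranged so that $c^*$ is irredundant for $a^*$ — to be the only delicate point; everything else is a direct appeal to Lemma \ref{thirdpoint} and the pregeometry axioms.
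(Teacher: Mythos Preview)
Your outline for reflexivity and symmetry is fine, and the overall idea of using Lemma \ref{thirdpoint} to conclude $a^*\sim c^*$ is correct. However, your treatment of the ``obstacle'' case --- when $a^*\in\ccl^*(\vec d^*\vec e^*)$ already --- has a genuine gap. The assertion that exchange forces $c^*$ to be needed for $a^*$ unless $a^*\in\ccl^*(\vec e^*)$ is simply false as stated: nothing prevents, for instance, one of the $d_i^*$ from equalling $a^*$ itself, in which case $a^*\in\ccl^*(\vec d^*)$ trivially while $a^*\notin\ccl^*(\vec e^*)$. More generally, the two conditions you cite, $a^*\notin\ccl^*(\vec e^*)$ and $b^*\notin\ccl^*(\vec d^*)$, do not by themselves imply $a^*\notin\ccl^*(\vec d^*\vec e^*)$, and no amount of basis reorganisation changes that. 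Your sketch here is hand-waving rather than an argument.

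The paper sidesteps this difficulty by not invoking Lemma \ref{thirdpoint} for the \emph{hypotheses}. Instead it uses the definition of $\sim$ directly: from $a^*\sim b^*$ one gets a single point $d_1^*\in\ccl^*(a^*,b^*)\setminus\{a^*,b^*\}$, and from $b^*\sim c^*$ a single $d_2^*\in\ccl^*(b^*,c^*)\setminus\{b^*,c^*\}$. Then $a^*\in\ccl^*(c^*d_1^*d_2^*)$, and the obstacle case $a^*\in\ccl^*(d_1^*,d_2^*)$ is a two-parameter situation that can be handled by a short explicit case analysis (on whether $d_1^*=d_2^*$, and if not, on which of $a^*,c^*$ coincide with $d_1^*,d_2^*$). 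If you want to salvage your approach, the right move is to observe that the witnesses from Lemma \ref{thirdpoint} can always be taken of length one (this is essentially the ``only if'' direction), at which point you are back to the paper's argument.
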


\begin{proof}

 Reflexivity and symmetry are clear. For transitivity,  assume
$a^* \sim b^* \sim c^*$, with all three distinct. Then there exist
$d_1^* \in \ccl^*(a^* b^*)\backslash \lbrace a^*, b^*\rbrace$ and
$d_2^* \in \ccl^*(b^*, c^*)\backslash \lbrace b^*, c^*\rbrace$. If
$d_1^* = d_2^*$, then $c^*\in \ccl^*(b^*, d_2^*) = \ccl^*(b^*, d_1^*) =
\ccl^*(a^*, d_1^*)$, and therefore $d_1^* = d_2^* \in \ccl^*(a^*, c^*)
\backslash \lbrace a^*, c^*\rbrace$, hence $a^*\sim c^*$.

Now, assume that $d_1^*\neq d_2^*$ and $a^*\in \ccl^*(d_1^*, d_2^*)$.
If $a^* = d_2^*$, then $b^*$ witnesses $a^*\sim c^*$.  If $a^*\neq
d_2^*$, then $d_2^*\in \ccl^*(a^*, d_1^*)$. We also have $b^*\in
\ccl^*(a^*, d_1^*)$, $c^*\in \ccl^*(b^*, d_2^*)$.  Thus, $c^* \in \ccl^*(a^*,
d_1^*)$.  If $c^* = d_1^*$, $b^*$ witnesses $a^*\sim c^*$.  If
$c^*\neq d_1^*$, then $d_1^*$ witnesses $a^*\sim c^*$. Finally,
assume that $d_1^*\neq d_2^*$ and neither $a^*\not\in \ccl^*(d_1^*,
d_2^*)$. Then $$a^*\in \ccl^*(c^* d_1 d_2^*) \backslash \ccl^*(d_1^*
d_2^*).$$  Thus, by Lemma \ref{thirdpoint}, $a^*\sim c^*$.

\end{proof}

For  any $a^*\in M^*$ let $[a^*]$ denote the $\sim$-class of $a^*$.\\

\begin{lem}\label{decom1}  The $\sim$-classes are closed in the sense of $\ccl^*$, i.e. for any
$a^*\in M^*$, we have $\ccl^*([a^*]) = [a^*]$.
\end{lem}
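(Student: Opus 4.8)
The plan is to show both inclusions, of which only $\ccl^*([a^*])\subseteq[a^*]$ requires work. Since $a^*\in[a^*]$ always, monotonicity of $\ccl^*$ gives $[a^*]\subseteq\ccl^*([a^*])$, so only the reverse inclusion is at issue. So take $c^*\in\ccl^*([a^*])$ and aim to prove $c^*\sim a^*$. By finite character of the pregeometry $\ccl^*$ there are finitely many points $b_1^*,\dots,b_m^*\in[a^*]$ with $c^*\in\ccl^*(b_1^*\dots b_m^*)$; we may assume $m$ is minimal, so that $c^*\notin\ccl^*(b_1^*\dots b_{m-1}^*)$ and in particular $c^*\in\ccl^*(b_m^*\, b_1^*\dots b_{m-1}^*)\setminus\ccl^*(b_1^*\dots b_{m-1}^*)$.

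The first key step is to invoke Lemma \ref{thirdpoint}: the displayed membership relation says precisely that $c^*\sim b_m^*$. Since $b_m^*\in[a^*]$ means $b_m^*\sim a^*$, and $\sim$ is an equivalence relation on $M^*$ by Lemma \ref{eqrel}, transitivity yields $c^*\sim a^*$, i.e. $c^*\in[a^*]$. (One should also handle the trivial edge case $c^*\in\ccl^*(\emptyset)$: but $[a^*]\subseteq M^*$ and $\ccl^*$ is the pregeometry closure on $M^* = M\setminus\ccl_H(\emptyset)$ modulo the closure relation, so every element of $\ccl^*([a^*])$ already lies in $M^*$; and if $m=0$ then $c^*\in\ccl^*(\emptyset)$ is vacuous or $c^* = a^*$ trivially, so there is nothing to do.)

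The only subtle point is making sure Lemma \ref{thirdpoint} applies with the extra ``parameters'' $b_1^*,\dots,b_{m-1}^*$ rather than a single ``base point'' $b^*$; but Lemma \ref{thirdpoint} is stated exactly in that generality — it characterizes $a^*\sim b^*$ via the existence of \emph{finitely many} auxiliary points $d_1^*,\dots,d_n^*$ with $a^*\in\ccl^*(b^*d_1^*\dots d_n^*)\setminus\ccl^*(d_1^*\dots d_n^*)$ — so we may simply read off $c^*\sim b_m^*$ by taking $b^* = b_m^*$ and $\{d_1^*,\dots,d_n^*\} = \{b_1^*,\dots,b_{m-1}^*\}$. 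Thus there is no real obstacle here; the lemma is a formal consequence of the two preceding lemmas together with the exchange and finite-character properties of the pregeometry $\ccl^*$. The hardest part is purely bookkeeping: arranging the minimality of $m$ so that the ``$\setminus\ccl^*(\cdots)$'' clause in Lemma \ref{thirdpoint} is genuinely satisfied.
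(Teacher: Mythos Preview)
Your proposal is correct and follows essentially the same approach as the paper: take a minimal tuple $b_1^*,\ldots,b_m^*\in[a^*]$ with $c^*\in\ccl^*(b_1^*\ldots b_m^*)$, and use Lemma~\ref{thirdpoint} (with the remaining $b_i^*$ as parameters) to conclude $c^*\sim b_m^*$, then transitivity of $\sim$ gives $c^*\in[a^*]$. The paper's proof is simply a terser version of what you wrote, handling the degenerate case $m=1$ by noting directly that $c^*=b_1^*$ in the geometry.
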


\begin{proof} Assume $c^*\in \ccl^*(b_1^*, \ldots, b_n^*)$, $\vec b^*=
(b_1^*, \ldots, b_n^*) \in [a^*]$
minimal such tuple, and $n>1$ (if $n = 1$, we have $c^*=b_1^*$).
Then $b_1^*\ldots b_{n-1}^*$ witnesses $c^*\sim b_n^*$, by Lemma
\ref{thirdpoint}.
\end{proof}

For any geometry $(X, Cl)$, a  
non-empty subset
 of $X$, with the closure operator induced by $Cl$, is referred to as a {\it subgeometry} of $(X,Cl)$. Clearly, a subgeometry is itself a geometry.  Next lemma shows that
$\sim$ splits $(M^*, \ccl^*)$ into disjoint subgeometries of the form
$([a^*], \ccl^*)$, with no "interaction" between them.

\begin{lem}\label{decom2} For any $A\subset M^*$, $\ccl^*(A) = \bigcup_{[a^*]\in
M^*/\sim}\ccl^*(A\cap [a^*])$.
\end{lem}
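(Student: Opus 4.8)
The plan is to prove the two inclusions separately. The direction $\bigcup_{[a^*]\in M^*/\sim}\ccl^*(A\cap[a^*])\subseteq\ccl^*(A)$ is immediate from monotonicity of $\ccl^*$, since $A\cap[a^*]\subseteq A$ for every class. For the reverse inclusion I would fix $c^*\in\ccl^*(A)$ and, by finite character, choose $a_1^*,\ldots,a_n^*\in A$ with $c^*\in\ccl^*(a_1^*,\ldots,a_n^*)$ and $n$ minimal for this property. Since $c^*\in M^*$ and $\ccl^*(\emptyset)=\emptyset$, we have $n\ge 1$; the case $n=1$ forces $c^*=a_1^*$ and is trivial, so assume $n\ge 2$.

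The heart of the argument is the claim that $a_i^*\sim c^*$ for every $i\le n$. By minimality of $n$ we have $c^*\notin\ccl^*(a_1^*,\ldots,\widehat{a_i^*},\ldots,a_n^*)$, so the exchange property in the geometry $(M^*,\ccl^*)$ applied to $c^*\in\ccl^*(a_1^*,\ldots,a_n^*)$ yields $a_i^*\in\ccl^*(c^*,a_1^*,\ldots,\widehat{a_i^*},\ldots,a_n^*)$; and, again by minimality, $a_i^*\notin\ccl^*(a_1^*,\ldots,\widehat{a_i^*},\ldots,a_n^*)$, for otherwise $a_i^*$ could be dropped from the generating set. Thus
\[
a_i^*\in\ccl^*\big(c^*,a_1^*,\ldots,\widehat{a_i^*},\ldots,a_n^*\big)\setminus\ccl^*\big(a_1^*,\ldots,\widehat{a_i^*},\ldots,a_n^*\big),
\]
which is exactly the configuration of Lemma \ref{thirdpoint} with $b^*=c^*$ and the $d_j^*$ ranging over $\{a_1^*,\ldots,a_n^*\}\setminus\{a_i^*\}$, so $a_i^*\sim c^*$.

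Granting the claim, $\{a_1^*,\ldots,a_n^*\}\subseteq A\cap[c^*]$, and hence $c^*\in\ccl^*(a_1^*,\ldots,a_n^*)\subseteq\ccl^*(A\cap[c^*])\subseteq\bigcup_{[a^*]}\ccl^*(A\cap[a^*])$, completing the proof. There is no real obstacle here: the only ingredient beyond routine pregeometry manipulation (finite character, exchange) is Lemma \ref{thirdpoint}, which is precisely the assertion that a minimal $\ccl^*$-dependence among points of $M^*$ cannot cross $\sim$-classes; it is through that lemma, together with Lemmas \ref{eqrel} and \ref{decom1} (which make the classes $[a^*]$ well-defined and $\ccl^*$-closed), that the one-basedness of $T$ ultimately enters. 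If desired, one can moreover observe via Lemma \ref{decom1} that the union on the right is disjoint and that $c^*$ lies only in the $[c^*]$-summand, so the decomposition is canonical.
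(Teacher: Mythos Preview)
Your proof is correct and essentially the same as the paper's: both pick a minimal tuple $a_1^*,\ldots,a_n^*\in A$ with $c^*\in\ccl^*(a_1^*,\ldots,a_n^*)$, then use exchange and minimality to feed the resulting configuration into Lemma~\ref{thirdpoint}. The only cosmetic difference is that you show $a_i^*\sim c^*$ for each $i$, whereas the paper shows $a_1^*\sim a_i^*$ for each $i\ge 2$; either way one concludes that all $a_i^*$ lie in a single $\sim$-class.

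One tangential remark in your write-up is inaccurate: one-basedness does not enter through Lemmas~\ref{thirdpoint}, \ref{eqrel}, or \ref{decom1}. Those lemmas (and the present one) hold for any $H$-structure in this setting; one-basedness is only invoked in Proposition~\ref{geom1based} (modularity of $\ccl_H$) and then in Lemma~\ref{decom4}. This does not affect the validity of your argument, but you should drop or amend that sentence.
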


\begin{proof}
Suppose $c^*\in\ccl^*(A)$, and  $a_1^*, \ldots, a_n^*\in A$ is a tuple such that $c\in \ccl^*(a_1^*, \ldots, a_n^*)$, and $n$ is minimal such. 
It suffices to show that $a_i^*$ all come from the same $\sim$-class.
If $n=1$, we are done.  Suppose $n > 1$.  Then $c^* a_3^* \ldots
a_n^*$ witnesses $a_1^*\sim a_2^*$ by Lemma \ref{thirdpoint}. Similarly,  $a_1^*\sim a_i^*$ for all $2 < i \le n$. Thus, all $a_i^*$ belong to the same $\sim$-class.
\end{proof}

Next, we will show that the $\sim$-classes are
either singletons or infinite dimensional (as geometries).

\begin{lem}\label{decom3} If $|[a^*]| > 1$, then $\dim([a^*])$ is infinite.
\end{lem}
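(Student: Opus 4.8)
The plan is to prove by induction on $n\ge 1$ that $[a^*]$ contains a $\ccl^*$-independent subset of size $n$; since a pregeometry having independent subsets of every finite size cannot have a finite basis, this yields $\dim([a^*])=\infty$. The cases $n=1$ (take $\{a^*\}$) and $n=2$ (since $|[a^*]|>1$ there is $v^*\in[a^*]$ with $v^*\ne a^*$, and two distinct points of a geometry are independent) are immediate, so fix once and for all a representative $v\in M$ of some point $v^*\in[a^*]\setminus\{a^*\}$.

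For the inductive step, suppose $p_1^*,\dots,p_n^*\in[a^*]$ are $\ccl^*$-independent, with representatives $p_1,\dots,p_n\in M$. First I would invoke the extension property of the supersimple theory $T^{ind}_{\mathcal{G}}$ (Theorem~\ref{simplicity}) to produce $v'\models\tp_H(v/\acl_H(a))$ with $v'\ind^{ind}_{\acl_H(a)}\acl_H(a\,p_1\cdots p_n)$. Since $\tp_H(v'/a)=\tp_H(v/a)$, an automorphism of $(M,H)$ fixing $a$ and sending $v$ to $v'$ also fixes $H(M)$ setwise, hence induces an automorphism of $(M^*,\ccl^*)$ that fixes $a^*$ and preserves $\sim$; this gives $v'\notin\ccl_H(\emptyset)$, $v'^*\ne a^*$ and $v'^*\sim a^*$, so $v'^*\in[a^*]\setminus\{a^*\}$ and in particular $SU(v'/a\,H(M))=\omega^\alpha$. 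Next I would feed $v'\ind^{ind}_{\acl_H(a)}\acl_H(a\,p_1\cdots p_n)$ into the forking characterization (Theorem~\ref{charforking}): using Corollary~\ref{aclp2} to absorb $\acl_H(a)$ and $\acl_H(a\,p_1\cdots p_n)$ into $a\,H(M)$ and $a\,p_1\cdots p_n\,H(M)$ respectively (for the purposes of $\cL$-forking over $H(M)$), clause~(2) of that characterization gives $v'\ind_{a\,H(M)}p_1\cdots p_n\,H(M)$, whence $SU(v'/p_1\cdots p_n\,a\,H(M))=SU(v'/a\,H(M))=\omega^\alpha$. Since $SU$-rank is bounded by $\omega^\alpha$ and cannot increase when parameters are added, this forces $SU(v'/p_1\cdots p_n\,H(M))=\omega^\alpha$, i.e.\ $v'^*\notin\ccl^*(p_1^*,\dots,p_n^*)$. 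Adjoining $v'^*$ to $\{p_1^*,\dots,p_n^*\}$ then produces a $\ccl^*$-independent subset of $[a^*]$ of size $n+1$, closing the induction.

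The step I expect to require the most care is keeping the ``new'' point inside the right $\sim$-class: cloning the relevant configuration over $\emptyset$ would in general land in some other class, so the construction must be carried out over the fixed base $a$, which makes $\sim$-connectedness of $v'^*$ to $a^*$ automatic by homogeneity. The remaining work is the routine translation between $\cL_H$-nonforking and the $\cL$-statement $v'\ind_{a\,H(M)}p_1\cdots p_n\,H(M)$, handled by Theorem~\ref{charforking} together with Corollary~\ref{aclp2}; notably, no structural input about $[a^*]$ (such as its being projective over a division ring) is needed for this lemma.
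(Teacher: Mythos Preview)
Your proof is correct and follows essentially the same approach as the paper: over the fixed base $a$, produce many $\ccl_H$-independent realizations of the type of a second point $v\sim a$, each of which lands in $[a^*]$. The only cosmetic difference is that the paper obtains the whole independent family at once via the $H$-structure extension property together with Proposition~\ref{H-indtypes} (and carries an explicit third point $c$ as witness for $\sim$), whereas you proceed inductively through nonforking extension in the supersimple theory $T^{ind}_{\mathcal{G}}$ and then translate back using Theorem~\ref{charforking}; both routes are equally valid and yield the same conclusion.
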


\begin{proof}
Suppose there exists $b^*\sim a^*$, $b^*\neq a^*$. 
Let $c^*\in \ccl^*(a^*,b^*)\backslash\{a^*,b^*\}$. 
Let $a, b, c\in M$
be representatives of the classes $a^*, b^*$ and $c^*$ modulo the relation $\ccl_H(x)=\ccl_H(y)$,
respectively.

 Then $SU(a/H(M))=SU(b/aH(M))=\omega$. By the extension property,  we can find $b_i\models \tp(b/a)$, $i\in \omega$,
independent over $aH(M)$. Then, by Lemma \ref{H-indtypes},  $\tp_H(b_i/a)=\tp_H(b/a)$ for all $i\in\omega$. 
Also, $b_i$ are $\ccl_H$-independent over $a$. Let $c_i$ be such that $\tp_H(b_ic_i/a)=\tp_H(bc/a)$ for $i\in\omega$.
Passing to the geometry $(M^*, \ccl^*)$, we get $b_i^*\sim a^*$ witnessed by $c_i^*$,
$i\in\omega$, with $b_i$ $\ccl^*$-independent over $a^*$. This shows that that
$([a^*], \ccl^*)$ is
 infinite dimensional.
\end{proof}

Recall the following classical fact (see \cite{Hal})
about projective geometries.

\begin{fact}\label{projective}
 A non-trivial modular geometry of
 dimension $\ge 4$ in which any closed set of dimension $2$ has
 size $\ge 3$ is a projective geometry over some division ring.
\end{fact}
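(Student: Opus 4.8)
The plan is to follow the classical coordinatization route (the reference \cite{Hal}): one shows that a geometry $(X,\cl)$ satisfying the hypotheses is, as an incidence structure, a projective space of projective dimension $\ge 3$, and then invokes the Veblen--Young theorem. First I would observe that the hypothesis that every closed set of dimension $2$ has at least $3$ points forces $(X,\cl)$ to be connected, i.e.\ directly indecomposable as a matroid: if $X = X_1 \oplus X_2$ with both parts nonempty, then for $a \in X_1$ and $b \in X_2$ the closed set $\cl(\{a,b\}) = \{a,b\}$ has dimension $2$ but only $2$ points, a contradiction; the same computation excludes coloops. Given this, the hypothesis that the geometry is non-trivial is automatic once $\dim(X) \ge 2$, and is presumably stated only for emphasis.

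Next I would verify that $(X,\cl)$, with its rank-$2$ closed sets taken as \emph{lines}, satisfies the Veblen--Young axioms of a projective space. Every line has at least $3$ points by hypothesis. Veblen's axiom (a Pasch-type incidence condition) follows from modularity: if $\pi$ is a rank-$3$ closed set and $L_1 \ne L_2$ are lines contained in $\pi$, then $L_1 \vee L_2 \subseteq \pi$ has rank $3$, so modularity gives $\dim(L_1 \cap L_2) = 2 + 2 - 3 = 1$; thus any two coplanar lines meet in a point, and applying this to the three lines spanned by the sides of a triangle yields Veblen's axiom. Since the geometry has projective dimension $\dim(X) - 1 \ge 3$, it is a projective space of dimension $\ge 3$, and by the Veblen--Young theorem it is isomorphic to $PG(n,D)$ for a division ring $D$ with $n = \dim(X) - 1$. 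Connectedness ensures a single projective factor rather than a nontrivial direct product, since any such product would again produce a $2$-point line.

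The main obstacle in a self-contained argument is the Veblen--Young coordinatization itself: one must construct the division ring $D$ from the incidence data, defining addition and multiplication on a line by means of projectivities and checking the ring axioms, with the key input being that Desargues' theorem holds \emph{automatically} in projective dimension $\ge 3$. Since this is standard I would simply cite it. The one point I would flag is that the dimension hypothesis is sharp: in projective dimension $2$, that is, for rank-$3$ geometries, a non-Desarguesian projective plane is modular, non-trivial, and has all its lines of size $\ge 3$, yet is not coordinatized by any division ring --- which is exactly why dimension $\ge 4$ is required.
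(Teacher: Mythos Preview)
The paper does not give its own proof of this statement: it is recorded as a classical fact with a citation to Hall's \emph{The Theory of Groups}, and is simply quoted for use in the subsequent lemmas. Your sketch is a correct outline of the standard Veblen--Young coordinatization argument that one would find in such a reference, so there is nothing to compare against beyond noting that your write-up supplies what the paper deliberately leaves as a black box.

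One minor remark on your exposition: your claim that non-triviality is ``automatic once $\dim(X) \ge 2$'' given the line-size hypothesis is fine, but the paper's phrasing keeps ``non-trivial'' as an explicit hypothesis (presumably to match the formulation in the cited source), so if you were writing this up you might simply retain it rather than arguing it away. Everything else --- the connectedness argument ruling out direct-sum decompositions via two-point lines, the derivation of the Pasch/Veblen axiom from the modular law on rank-$2$ flats inside a rank-$3$ flat, and the observation that dimension $\ge 4$ is exactly what is needed to force Desargues and hence avoid the non-Desarguesian planes --- is accurate and is the standard route.
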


\begin{lem}\label{decom4} If $T$ is one-based and $|[a^*]| > 1$ , the geometry $([a^*], cl)$
is an infinite dimensional projective geometry over some division
ring.
\end{lem}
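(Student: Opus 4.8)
The plan is to deduce this from Fact \ref{projective} by verifying its hypotheses for the geometry $([a^*],\ccl^*)$. We already have that $(M^*,\ccl^*)$, hence every subgeometry, is modular (this is Proposition \ref{geom1based}, passing to the associated geometry), and by Lemma \ref{decom3} the geometry $([a^*],\ccl^*)$ is infinite dimensional, so in particular of dimension $\ge 4$. By the very definition of $\sim$, if $b^*,c^*\in[a^*]$ are distinct then $b^*\sim c^*$, which means $|\ccl^*(b^*,c^*)|\ge 3$; hence every $2$-dimensional closed subset of $([a^*],\ccl^*)$ has size $\ge 3$. Finally, triviality is ruled out by the hypothesis $|[a^*]|>1$ together with Lemma \ref{decom3}: a trivial geometry of positive dimension contains a $2$-element closed set, contradicting $b^*\sim c^*$ for distinct $b^*,c^*$. (More directly, $\sim$-classes of size $>1$ are by construction non-trivial since any two distinct points generate a line with a third point.) Thus all hypotheses of Fact \ref{projective} are met and $([a^*],\ccl^*)$ is a projective geometry over some division ring, which is infinite dimensional by Lemma \ref{decom3}.

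The one subtlety to address carefully is that Fact \ref{projective} is stated for geometries, and one must make sure the closure operator on $[a^*]$ is the one \emph{induced} from $\ccl^*$ (as defined just before Lemma \ref{decom1}); Lemma \ref{decom1} guarantees that $[a^*]$ is $\ccl^*$-closed, so the induced closure operator on $[a^*]$ is simply the restriction of $\ccl^*$, and Lemma \ref{decom2} shows there is no interference from other $\sim$-classes, so the combinatorial structure of $([a^*],\ccl^*)$ as a standalone geometry is exactly what Fact \ref{projective} requires. I would also note that modularity passes to closed subgeometries: if $X=\ccl^*(X)$ is modular-closed inside a modular geometry, then for closed $A,B\subseteq X$ we compute $\dim(A\cap B)$ inside $X$ the same as inside $M^*$, so modularity is inherited.

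I expect the proof to be short, and the only real work is bookkeeping about which earlier lemmas supply which hypothesis. The main obstacle, if any, is making the "non-trivial" clause airtight: one needs that a geometry is non-trivial precisely when some closed set of dimension $\le 2$ has size exceeding its dimension count, and here $|[a^*]|>1$ forces (via the definition of $\sim$ and Lemma \ref{decom3}) the existence of a line with at least three points, which is exactly non-triviality. So the proof reads: by Proposition \ref{geom1based} the geometry is modular; by Lemma \ref{decom3} it is infinite dimensional, hence of dimension $\ge 4$; by definition of $\sim$ every line in it has $\ge 3$ points and it is non-trivial; so Fact \ref{projective} applies and yields that $([a^*],\ccl^*)$ is an infinite dimensional projective geometry over a division ring.
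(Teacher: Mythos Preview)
Your proposal is correct and follows essentially the same approach as the paper: invoke Proposition \ref{geom1based} for modularity, use closedness of $[a^*]$ (you cite Lemma \ref{decom1}, the paper cites Lemma \ref{decom2}, both work) to pass modularity to the subgeometry, use the definition of $\sim$ for the line condition and non-triviality, and apply Fact \ref{projective}. You are slightly more explicit than the paper in citing Lemma \ref{decom3} for the dimension $\ge 4$ hypothesis and in justifying that modularity restricts to closed subgeometries, but the argument is the same.
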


\begin{proof} By Proposition \ref{geom1based}, $(M^*,\ccl^*)$ is modular.  By Lemma  \ref{decom2}, $[a^*]$ is closed in $(M^*,\ccl^*)$, and hence $([a^*],\ccl^*)$ is also modular. 
Since $|[a^*]|>1$, $([a^*],\ccl^*)$  is non-trivial (there are two distinct point having a third one in its closure).
Thus, the statement follows by Fact \ref{projective} and the definition of $\sim$.
\end{proof}

We are now ready to prove the characterization of the geometry of
$\ccl_H$, as well as the original
geometry of $\ccl$ in the case when $T$ is one-based.

\begin{prop}\label{geom_1b}
Suppose $T$ is a  one-based supersimple theory of SU-rank $\omega$,
$(N,H)$ a sufficiently (e.g. $|T|^+$-) saturated 
models of $T^{ind}_{\mathcal{G}}$, and $M$ a small model of $T$ (e.g. of size $|T|$).
Then
\\
(1) The geometry $(N^*,\ccl^*)$ of $\ccl_H$ in $(N,H)$ is a
disjoint union of infinite dimensional projective geometries over
division rings and/or a trivial geometry.
\\
(2) The geometry of the closure operator $\ccl$ in $M$ is a disjoint union
of subgeometries of projective geometries over division rings.
\end{prop}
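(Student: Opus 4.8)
The plan is to combine the decomposition results (Lemmas \ref{decom2}, \ref{decom3}, \ref{decom4}) for part (1), and then derive part (2) by realizing $M$ inside an $H$-independent part of a large $H$-structure, so that the geometry of $\ccl$ in $M$ embeds into the geometry of $\ccl_H$. For part (1), first note that $(N^*,\ccl^*)$ is modular by Proposition \ref{geom1based}. By Lemma \ref{eqrel} the relation $\sim$ is an equivalence on $N^*$, and by Lemma \ref{decom2} the geometry $(N^*,\ccl^*)$ decomposes as a disjoint union (with no interaction, i.e. $\ccl^*$ of a union of $\sim$-classes is the union of the $\ccl^*$'s) of the subgeometries $([a^*],\ccl^*)$. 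By Lemma \ref{decom3} each such class is either a singleton or infinite dimensional. The union of all singleton classes forms a trivial subgeometry (any closed set of singletons is a union of points, again with no interaction, since two distinct $\sim$-classes never share a common point in the closure of the other by Lemma \ref{thirdpoint}). By Lemma \ref{decom4}, every non-singleton class carries the structure of an infinite dimensional projective geometry over a division ring. This gives (1).

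For part (2): embed $M$ into $N$. More precisely, since $(N,H)$ is $|T|^+$-saturated and $M$ has size $|T|$, by the co-density/extension property of $H$-structures (Definition \ref{Hstructures}(4), iterated as in Lemma \ref{existence}) we may realize an isomorphic copy of $M$ inside $N$ as an $H$-independent subset, i.e. with $M\ind H(N)$ over $\emptyset$. For such $M$, since $M$ is forking-independent from $H(N)$, for $a\in M$ and $B\subset M$ we have $SU(a/B)<\omega^\alpha$ if and only if $SU(a/B\cup H(N))<\omega^\alpha$: one direction is monotonicity of $\ccl$, and the other follows from $a\ind_{B} H(N)$ (using that $M\ind H(N)$ gives $aB\ind H(N)$, hence $a\ind_{B}H(N)$, so adjoining $H(N)$ does not drop the rank below $\omega^\alpha$). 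Therefore the pregeometry $(M,\ccl)$ embeds isometrically into $(N,\ccl_H)$; passing to associated geometries, the geometry of $\ccl$ on $M$ embeds as a subgeometry of $(N^*,\ccl^*)$. Since a subgeometry of a disjoint union of projective geometries over division rings and a trivial geometry is again a disjoint union of subgeometries of such (a subgeometry of a trivial geometry being trivial, hence a subgeometry of a projective geometry over any division ring), (2) follows.

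The main obstacle is the rank-preservation step under adjoining $H(N)$, i.e. verifying that $a\in\ccl(B)$ iff $a\in\ccl_H(B)$ for $a,B$ in the $H$-independent copy of $M$. This needs the continuity of $SU$-rank together with the fact that $M\ind H(N)$: one has to rule out that adjoining the (infinite, but forking-independent over $\emptyset$, and hence over $B$) set $H(N)$ could decrease $SU(a/B)$ from $\omega^\alpha$ to something smaller, which is exactly the statement that $a\ind_B H(N)$ implies $SU(a/B\cup H(N))=SU(a/B)$ — immediate from the definition of forking independence, but one must be careful that $M\ind H(N)$ (independence of the whole copy from $H(N)$) is what the embedding lemma delivers, not merely element-by-element independence. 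Once this is in place, both parts are formal consequences of the decomposition lemmas and Fact \ref{projective}.
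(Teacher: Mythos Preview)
Your proof is correct and follows essentially the same approach as the paper. For (1) you cite the same decomposition lemmas (\ref{decom2}, \ref{decom3}, \ref{decom4}), and for (2) you embed $M$ as an $H$-independent subset of a saturated $H$-structure so that $\ccl$ on $M$ agrees with $\ccl_H$, just as the paper does via Lemma \ref{existence}. One small remark: the rank-preservation step you flag as the ``main obstacle'' does not actually require continuity of the $SU$-rank; once $M\ind H(N)$ gives $a\ind_B H(N)$, the equality $SU(a/B)=SU(a/B\cup H(N))$ is immediate from the definition of non-forking.
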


\begin{proof}
(1)  Follows by Lemmas \ref{decom2}, \ref{decom3} and \ref{decom4}.
\\
(2) By Lemma \ref{existence}, any structure of the form
$(M,H)$ where $M\models T$, and $H(M)$ is an independent set of generics,
can be embedded, in an $H$-independent way, in a sufficiently
saturated $H$-structure. Thus we may assume that
$(M,\emptyset)\subset (N,H)$ with $M\ind_\emptyset H(N)$.
Then $\ccl$-independence over $\emptyset$ in $M$ coincides with
$\ccl$-independence in $N$ over $H(N)$, i.e. $\ccl_H$-independence.
Thus, we have a natural embedding of the associated geometry of
$(M,\ccl)$ into $(N^*, \ccl^*)$. The result now follows by (1).
\end{proof}

\begin{rema}\label{geom_1b-modularity}
The previous proposition also holds with the weaker assumption that the pregeometry
$(N,\cl_H)$ is modular instead of asking that $T$ is one-based. All the proofs depend
on the properties of the closure operator, not the properties of forking in the full 
structure.
\end{rema}


\begin{thebibliography}{99}



\bibitem{BB}  {\sc J. Baldwin and M. Benedikt},
Stability theory, permutations of indiscernibles, and embedded finite models,
Trans. Amer. Math. Soc. 352 (2000), 4937-4969.

\bibitem{BPV}  {\sc I. Ben Yaacov, A. Pillay, and E. Vassiliev}, Lovely pairs of models,
Ann. Pure Appl. Logic 122 (2003), no. 1-3, 235-261.

\bibitem{BeKi}{\sc A. Berenstein and J. Kim},
{\rm Dense codense predicates and the NTP2}, 
to be published in Math. Log. Quart.

\bibitem{BeVa}{\sc A. Berenstein and E. Vassiliev},
{\rm On lovely pairs of geometric structures}, Annals of Pure and
Applied Logic, Volume 161, Issue 7, April 2010, pp. 866-878.


\bibitem{BeVa2}{\sc A. Berenstein and E. Vassiliev},
{\rm Weakly one-based geometric theories}, Journal of Symbolic Logic, v. 77, no. 2, June 2012.


\bibitem{BV-Tind}{\sc A. Berenstein and E. Vassiliev},
{\rm Geometric structures with a dense independent subset},
Selecta Mathematica, January 2016, Volume 22, Issue 1, pp 191-225.


\bibitem{Bo}{\sc G Boxall}, Lovely pairs and dense pairs of real closed fields,
PhD thesis, University of Leeds.



\bibitem{Ca}{\sc J. Carmona},
{\rm Forking geometry on theories with an independent predicate}, 
Archive for Mathematical Logic,
February 2015, Volume 54, Issue 1-2, pp. 247-255.

\bibitem{CZ} {\sc E. Casanovas and M. Ziegler},
{\rm Stable theories with a new predicate},
Journal of Symbolic Logic, Vol 66, (2001), pp. 1127-1140.

\bibitem{CP}{\sc z. Chatzidakis and A. Pillay},
{\rm Generic structures and simple theories},
Annals of Pure and Applied Logic, Vol 95, 1998, pp 71-92.



\bibitem{DMS} {\sc A. Dolich, C. Miller and C. Steinhorn},
{\rm Expansions of O-minimal Structures by Dense Independent Sets},
Annals of Pure and Applied Logic,
Volume 167, Issue 8, August 2016, Pages 684-706.





\bibitem{Fo} {\sc A. Fornasiero}, {\rm Dimension, matroids, and dense pairs of first-order structures},
APAL, Volume 162, No. 7, 2011, pp. 514-543.




\bibitem{Hal} {\sc M. Hall}, {\rm The theory of groups}, Macmillan, New York, 1959.




\bibitem{Ma} {\sc D. Marker}, {\rm Model Theory of Difference fields}, in Model Theory of Fields,
D. Marker, M. Messmer, and A. Pillay, Lecture Notes in Logic, Volume 5 Berlin: Springer-Verlag, 1996.

\bibitem{Pi1} {\sc A. Pillay} 
{\rm The geometry of forking and groups of finite Morley rank} 
Journal of Symbolic Logic. {\bf 60(4)} (1995), pp 1251-1259.


\bibitem{Go}{\sc J. Goode}
{\rm Some trivial considerations}, Journal of symbolic logic, {\bf 56(02)} (1991), pp 624-631.


\bibitem{Pi} {\sc A. Pillay} 
{\rm A note on CM-triviality and the geometry of forking},
Journal of Symbolic Logic. {\bf 65(1)} (2000), pp 474-480.

\bibitem{Pi2} {\sc A. Pillay} 
{\rm Lecture notes on strongly minimal sets with a generic automorphism}, notes of
a graduate course in UIUC, 1999, available at http://www3.nd.edu/$\sim$apillay/. 


\bibitem{Po}{\sc B. Poizat}
{\rm Paires de Structures Stables},
Journal of Symbolic Logic, {\bf 48} (1983), pp 239-249.



\bibitem{Va1}{\sc E. Vassiliev},
{\rm Generic pairs of SU-rank 1 structures}, Annals of Pure and
Applied Logic, {\bf 120}, (2003), pp. 103-149.

\bibitem{Zi}{\sc M. Ziegler},
{\rm An exposition of Hrushovski's new Strongly Minimal Set}, 
Annals of Pure and Applied Logic {\bf 164} (12), (2013), pp 1507-1519.



\end{thebibliography}
\end{document}